\newcommand{\mb}[1]{\mathbb{#1}}
\newcommand{\ove}[1]{\overline{#1}}
\newcommand{\mtc}[1]{\mathcal{#1}}
\newcommand{\mtf}[1]{\mathfrak{#1}}
\newcommand{\mts}[1]{\mathscr{#1}}
\DeclareMathOperator{\Stab}{Stab}
\DeclareMathOperator{\Proj}{Proj}
\DeclareMathOperator{\mult}{mult}
\DeclareMathOperator{\Pic}{Pic}
\DeclareMathOperator{\rk}{rank}
\DeclareMathOperator{\SL}{SL}
\DeclareMathOperator{\che}{ch}
\DeclareMathOperator{\GIT}{GIT}
\DeclareMathOperator{\Aut}{Aut}
\DeclareMathOperator{\CM}{CM}
\newcommand{\sslash}{\mathbin{\mkern-3mu/\mkern-6mu/\mkern-3mu}}
\newcommand{\sheafHom}{\mathscr{H}\text{\kern -3pt {\calligra\large om}}\,}
\DeclareMathOperator{\topo}{top}
\newtheorem{theorem}{Theorem}[section]
\newtheorem{lemma}[theorem]{Lemma}
\newtheorem{corollary}[theorem]{Corollary}
\newtheorem{prop}[theorem]{Proposition}
\newtheorem*{notation}{Notation}
\newtheorem{remark}[theorem]{Remark}
\newtheorem{case}{Case}
\theoremstyle{remark}
\title{The Moduli Space of Genus Six Curves and K-stability: VGIT and the Hassett-Keel Program}
\date{}
\author{Junyan Zhao}
\address{851 S Morgan St, 60607, Chicago, Illinois, USA}
\email{jzhao81@uic.edu}
\begin{document}
\maketitle

\begin{abstract}
A general curve $C$ of genus six is canonically embedded into the smooth del Pezzo surface $\Sigma\subseteq \mb{P}^1\times\mb{P}^2$ of degree $5$ as a divisor in the class $\mtc{O}_{\Sigma}(2,2)$. In this article, we study the variation of geometric invariant theory (VGIT) for such pairs $(\Sigma,C)$, and relate the VGIT moduli spaces to the K-moduli of pairs $(\Sigma,C)$ and the Hassett–Keel program for moduli of genus six curves. We prove that the K-moduli spaces $\ove{M}^K(c)$ give the final several steps in the Hassett–Keel program for $\ove{M}_6$.

\end{abstract}

\tableofcontents

\section{Introduction}

This is the second article among three of the author in which we study the moduli space of curves of genus six using moduli spaces of pairs. A general genus six curve $C$ is canonically embedded into the smooth quintic del Pezzo surface $\Sigma$ as a divisor in the class $-2K_{\Sigma}$. Moreover, this embedding is unique up to $\Aut(\Sigma)$, which is isomorphic to the symmetric group $\mathfrak{S}_5$. Thus we expect a moduli space parameterizing pairs in which a general member is of the form $(\Sigma,C)$, where $C\in |-2K_{\Sigma}|$, to be related to the moduli space $\ove{M}_6$ of curves of genus six. A good candidate for such a moduli space is the K-moduli space $\ove{M}^K(c)$, $0<c<1/2$, parameterizing K-polystable pairs $(X,cD)$ which admit a $\mb{Q}$-Gorenstein smoothing to $(\Sigma,cC)$. This is the main object studied in \cite{zha22}.

For any $C\in|-2K_{\Sigma}|$, we may view $(\Sigma,C)$ as a complete intersection in $\mb{P}^1\times\mb{P}^2$ of type $(\mtc{O}_{\mb{P}^1\times\mb{P}^2}(1,2),\mtc{O}_{\mb{P}^1\times\mb{P}^2}(2,2))$. A natural parameter space for such complete intersections is a projective bundle $\mb{P}\mtc{E}\rightarrow \mb{P}^9$, which is equipped with a natural $G=\SL(2)\times \SL(3)$-action. The Picard rank of $\mb{P}\mtc{E}$ is 2, thus the GIT quotient involves a choice of
linearization parameterized by $t\in(0,1/2)$. We will denote these GIT quotients by $\ove{M}^{\GIT}(t)$. 

The case for curves of genus $4$ is analyzed in \cite{CMJL14}, in which the authors proved that the VGIT moduli spaces give the final steps in the Hassett-Keel program for genus 4 curves. Unfortunately, this cannot be expected to hold in our set-up. The main reason is the following: for any $X\in \mtc{O}_{\mb{P}^1\times\mb{P}^2}(1,2)$ such that $X\simeq \Sigma$, the subgroup of $\SL(2)\times \SL(3)$ fixing the point $[X]\in\mb{P}^9$ is an $\mtf{S}_4$, which is not isomorphic to $\Aut(\Sigma)=\mathfrak{S}_5$. The right thing we should expect is a generically $5:1$ map from the VGIT quotient to the moduli spaces which are birational to $\ove{M}_6$. Denote by $\mb{P}\mtc{E}^K(c)$ and $\mb{P}\mtc{E}^{\GIT}(t)$ the subsets of $\mb{P}\mtc{E}$ consisting of pairs $(X,D)$ which are c-K-semistable and GIT$_{t}$-semistable, respectively. Then we have the following result.

\begin{theorem} \textup{(Theorem \ref{12})}
    For any $c\in(0,1/17)$, set $t=t(c)=\frac{5c}{4+2c}$. Then we have $\mb{P}\mtc{E}^K(c)= \mb{P}\mtc{E}^{\GIT}(t)$, and identifications $$\ove{M}^{\GIT}(c)\simeq |-2K_{\Sigma}|\sslash \mtf{S}_4, \quad \textup{and} \quad \ove{M}^{K}(c)\simeq |-2K_{\Sigma}|\sslash \mtf{S}_5.$$ Moreover, we have a finite map $$\ove{M}^{\GIT}(c)\longrightarrow \ove{M}^{K}(c)$$ of degree $5$.
\end{theorem}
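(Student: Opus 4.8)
The plan is to sandwich the set-theoretic identity $\mb{P}\mtc{E}^K(c)=\mb{P}\mtc{E}^{\GIT}(t(c))$ between an explicit variation-of-GIT computation on $\mb{P}\mtc{E}$ and the K-stability analysis of the pairs $(\Sigma,cC)$, and then to read off the two quotients and the finite map. First I would run the VGIT of $\mb{P}\mtc{E}$ under $G=\SL(2)\times\SL(3)$. Since $\pic^G(\mb{P}\mtc{E})_{\mb{R}}$ is spanned by $\pi^{\ast}\mtc{O}_{\mb{P}^9}(1)$ and $\mtc{O}_{\mb{P}\mtc{E}}(1)$, the $G$-ample cone is a $2$-dimensional cone and, after normalization, the semistable loci are constant on finitely many open chambers $0<t_1<t_2<\cdots<1/2$; the walls are computed via the Hilbert--Mumford numerical criterion, reducing to one-parameter subgroups of a maximal torus and to the finitely many relevant weight patterns of $(1,2)$- and $(2,2)$-forms on $\mb{P}^1\times\mb{P}^2$ (this should give $t_1=1/14$). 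For $t$ in the first chamber $(0,t_1)$ the $\mtc{O}_{\mb{P}\mtc{E}}(1)$-part of every numerical weight is dominated by the $\pi^{\ast}\mtc{O}_{\mb{P}^9}(1)$-part, so $\GIT_t$-semistability of $(X,C)$ forces $[X]$ into the dense orbit $G\cdot[\Sigma]$, i.e. $X\cong\Sigma$, while for such $X$ the small $\mtc{O}_{\mb{P}\mtc{E}}(1)$-term rigidifies the orbit and makes every $(\Sigma,C)$ in fact $\GIT_t$-stable. A Luna slice at a point over $G\cdot[\Sigma]$, with residual $\mtf{S}_4$-action on the fibre $|\mtc{O}_\Sigma(2,2)|=|-2K_\Sigma|\cong\mb{P}^{15}$ (the stabilizer in $G$ acts through $\mtf{S}_4$), then identifies $\ove{M}^{\GIT}(t)=\mb{P}\mtc{E}^{\GIT}(t)\sslash G$ with $|-2K_\Sigma|\sslash\mtf{S}_4=|-2K_\Sigma|/\mtf{S}_4$ for all $t\in(0,t_1)$, and shows $\mb{P}\mtc{E}^{\GIT}(t)=\pi^{-1}(G\cdot[\Sigma])$ there.

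Next I would bring in the K-side, largely from \cite{zha22}: the quintic del Pezzo surface is K-stable, and a $\delta$-invariant estimate, pinned by an explicit destabilizing test configuration, puts the first K-wall of $\ove{M}^K(\cdot)$ at $c_1=1/17$. Hence for every $c\in(0,1/17)$ and every $C\in|-2K_\Sigma|$ the pair $(\Sigma,cC)$ is K-stable and no $\mb{Q}$-Gorenstein degeneration of $\Sigma$ appears, so the K-moduli stack is $[|-2K_\Sigma|/\mtf{S}_5]$ and, $\mtf{S}_5$ being finite, $\ove{M}^K(c)\cong|-2K_\Sigma|\sslash\mtf{S}_5=|-2K_\Sigma|/\mtf{S}_5$; inside $\mb{P}\mtc{E}$ the $c$-K-semistable locus is exactly $\{(X,C):X\cong\Sigma\}=\pi^{-1}(G\cdot[\Sigma])$. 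Because $t(c)=\frac{5c}{4+2c}$ is increasing with $t(1/17)=1/14=t_1$, it carries $(0,1/17)$ onto $(0,t_1)$, so comparing with the previous paragraph yields $\mb{P}\mtc{E}^K(c)=\pi^{-1}(G\cdot[\Sigma])=\mb{P}\mtc{E}^{\GIT}(t(c))$ together with the two stated identifications.

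To justify that $t(c)$ is the right identification of the parameters (and, in the broader picture of the companion paper, to obtain $\mb{P}\mtc{E}^K(c)\subseteq\mb{P}\mtc{E}^{\GIT}(t(c))$ intrinsically past the first chamber), I would compute the CM $\mb{Q}$-line bundle $\lambda_{\CM,c}$ of the universal family of log Fano pairs $(\mtc{X},c\mtc{C})\to\mb{P}\mtc{E}$ by intersection theory, writing $\lambda_{\CM,c}\equiv a(c)\,\pi^{\ast}\mtc{O}_{\mb{P}^9}(1)+b(c)\,\mtc{O}_{\mb{P}\mtc{E}}(1)$ with $a,b>0$; the inputs $K_\Sigma^2=5$, $(-K_\Sigma)\cdot C=10$, $C^2=20$ should make the normalized ratio equal to $\frac{5c}{4+2c}$, so that $\lambda_{\CM,c}$ is a positive multiple of the ample class of the chamber $t(c)$, and K-semistability of $(\Sigma,c\mtc{C}_s)$ then forces the generalized Futaki invariant of every $G$-test configuration — which equals, up to a positive multiple, the Hilbert--Mumford weight of $s$ for $\lambda_{\CM,c}$ — to be nonnegative. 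Finally, the morphism $\mb{P}\mtc{E}^{\GIT}(t(c))\to\ove{M}^K(c)$ sending a semistable pair $(X,C)\cong(\Sigma,C)$ to the class of $(\Sigma,cC)$ — i.e. forgetting the conic-bundle structure on $\Sigma$, equivalently the $\mathfrak{g}^1_4$ — is $G$-invariant by the good-moduli-space property and descends to $\ove{M}^{\GIT}(c)\to\ove{M}^K(c)$; under the identifications above this is the quotient map $|-2K_\Sigma|/\mtf{S}_4\to|-2K_\Sigma|/\mtf{S}_5$ induced by $\mtf{S}_4\hookrightarrow\mtf{S}_5$, which is finite of degree $[\mtf{S}_5:\mtf{S}_4]=5$ since $\mtf{S}_5$ acts faithfully on $|-2K_\Sigma|$ and freely on a dense open set (a general $(\Sigma,C)$ has trivial automorphisms, and the $5$ conic-bundle structures on $\Sigma$ are its distinct preimages).

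I expect the main obstacle to be the VGIT of the first paragraph, and specifically the descent of the first-chamber quotient to $|-2K_\Sigma|/\mtf{S}_4$: the GIT of the base $\mb{P}^9$ is itself degenerate — the orbit $G\cdot[\Sigma]$ is dense but not closed, with no properly stable points — so one must argue, via the Luna slice or a direct Kempf--Ness computation, that an arbitrarily small multiple of $\mtc{O}_{\mb{P}\mtc{E}}(1)$ turns the whole fibre $|-2K_\Sigma|$ into $\GIT_t$-stable points carrying only the residual finite $\mtf{S}_4$-action, and that no $\GIT_t$-semistable pair lies over $\mb{P}^9\smallsetminus(G\cdot[\Sigma])$ for small $t$. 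By comparison, the CM computation is routine but bookkeeping-heavy (the coefficients must land exactly on $t(c)=\frac{5c}{4+2c}$), and the value $c_1=1/17$ of the first K-wall is taken as input from \cite{zha22}.
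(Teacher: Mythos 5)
Your proposal follows the same overall architecture as the paper's proof: Hilbert--Mumford computations showing that for $t<\tfrac{1}{14}$ no pair with singular $X$ is $t$-semistable; the input from \cite{zha22} that for $0<c<\tfrac{1}{17}$ the $c$-K-semistable pairs are exactly those with $X\simeq\Sigma$ and that $\ove{M}^K(c)\simeq|-2K_{\Sigma}|\sslash\mtf{S}_5$; the transitivity of $G=\SL(2)\times\SL(3)$ on smooth $(1,2)$-surfaces with stabilizer $\mtf{S}_4$, giving $\ove{M}^{\GIT}\simeq|-2K_{\Sigma}|\sslash\mtf{S}_4$; the CM-line-bundle computation pinning $t(c)=\tfrac{5c}{4+2c}$; and the degree-$5$ map as the comparison of the $\mtf{S}_4$- and $\mtf{S}_5$-quotients. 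The one substantive divergence is how the reverse inclusion --- that \emph{every} pair $(\Sigma,C)$, including those with very degenerate $C$, is $\GIT_t$-(semi)stable for $0<t<\tfrac{1}{14}$, equivalently $\mb{P}\mtc{E}^K(c)\subseteq\mb{P}\mtc{E}^{\GIT}(t(c))$ --- is obtained. The paper never proves this fibrewise: it only proves $U^{\GIT}(c)\subseteq U^K(c)$ (via Proposition \ref{2} and its companions), then uses the induced $G$-invariant morphism $\ove{M}^{\GIT}(c)\to\ove{M}^K(c)$, properness and irreducibility to get surjectivity and the equivalence of the two stability notions, leaning on the K-moduli description of \cite{zha22} and the proportionality $\Lambda_{\CM,c}\sim L_{t(c)}$. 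You instead propose to prove it directly by a first-chamber VGIT/Luna-slice/Kempf--Ness argument, and you correctly flag this as the main obstacle --- but as written it is only a plausibility sketch, and the principle you invoke is not automatic here: the limiting linearization $\eta$ lies on the boundary of the ample cone (so Lemma \ref{8} does not apply verbatim), and the dense orbit $G\cdot[\Sigma]$ is not closed in $\mb{P}^{11}$, so one must genuinely exclude one-parameter subgroups $\lambda$ with $\mu(f;\lambda)=0$ but $\mu(g;\lambda)<0$ for an arbitrary (possibly non-reduced) $C\in|-2K_{\Sigma}|$; this is a computation of the same nature as Lemma \ref{5}, which your outline defers. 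So either carry out that fibrewise verification, or replace it by the paper's indirect mechanism (equivalently, by the standard argument that K-semistability forces nonnegativity of the Hilbert--Mumford weight for the polarization proportional to the CM line bundle), which is precisely what closes this step in the paper. Two cosmetic points: the base should be $\mb{P}^{11}=\mb{P}H^0(\mtc{O}_{\mb{P}^1\times\mb{P}^2}(1,2))$ (the ``$\mb{P}^9$'' you copied from the introduction is a slip), and your claim of \emph{stability}, rather than semistability, of all pairs in the open chamber is fine but should be justified by the standard fact that strictly semistable points occur only on walls.
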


Moreover, by analyzing the GIT stability conditions carefully, we can find out all the GIT walls, i.e. the values $t_i$ such that the moduli spaces $\ove{M}^{\GIT}(t)$ change when $t$ varies in $(0,1/2)$ and crosses $t_i$. These walls turn out to be connected with walls for K-moduli spaces.

\begin{theorem} \textup{(Theorem \ref{13})}
    The VGIT walls are $$t_0=0,\quad t_1=\frac{1}{14},\quad t_2=\frac{1}{8},\quad t_3=\frac{1}{6},\quad t_4=\frac{1}{5},\quad t_5=\frac{1}{4},\quad t_6=\frac{1}{3},\quad t_7=\frac{1}{2},$$ which bijectively correspond via the relation $\displaystyle t(c)=\frac{5c}{4+2c}$ to the K-moduli walls $$c_0=0,\quad c_1=\frac{1}{17},\quad c_2=\frac{2}{19},\quad c_3=\frac{1}{7},\quad c_4=\frac{4}{23},\quad c_5=\frac{2}{9},\quad c_6=\frac{4}{13},\quad c_7=\frac{1}{2}.$$ Among the walls $t_i\in(0,1/2)$, only the one $t_1=\frac{1}{14}$ is a divisorial contraction, and the remaining 5 walls are flips.
\end{theorem}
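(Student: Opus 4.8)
The plan is to run the Thaddeus--Dolgachev--Hu variation-of-GIT machine on $\mb{P}\mtc{E}$ essentially by hand, in the spirit of \cite{CMJL14} for the genus four case, and then to match the outcome with the K-moduli walls through the substitution $t(c)=\frac{5c}{4+2c}$. First I would set up the Hilbert--Mumford numerical criterion: write $X$ in terms of monomials of bidegree $(1,2)$ and $D$ in terms of monomials of bidegree $(2,2)$ modulo $s_0F,\,s_1F$, in the coordinates $(s_0,s_1)$ of $\mb{P}^1$ and $(x_0,x_1,x_2)$ of $\mb{P}^2$; for a one-parameter subgroup $\lambda$ of $G=\SL(2)\times\SL(3)$ with integer weights $(a_0,a_1)$, $a_0+a_1=0$, on $(s_0,s_1)$ and $(b_0,b_1,b_2)$, $\sum b_i=0$, on $(x_0,x_1,x_2)$, the linearization $L_t$ --- a positive combination of $\pi^\ast\mtc{O}_{\mb{P}^9}(1)$ and $\mtc{O}_{\mb{P}\mtc{E}}(1)$ whose ratio is governed by $t\in(0,1/2)$ --- gives a weight $\mu^{L_t}\big((X,D),\lambda\big)$ that is an explicit affine function of $t$, read off from the minimal $\lambda$-weight of a monomial occurring in the equation of $X$ and the minimal $\lambda$-weight of a monomial occurring in the equation of $D$. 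Using the Weyl group $\mathfrak{S}_2\times\mathfrak{S}_3$ I restrict to normalized dominant $\lambda$, and by the finiteness of ``states'' --- the subsets of the weight polytope realized by actual pairs --- the instability test reduces to a finite list of extremal $\lambda$ together with, for each, a finite list of candidate worst-monomial configurations.

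For each such pair $(\lambda,\text{configuration})$ the equation $\mu^{L_t}=0$ has a rational solution; collecting the solutions in $(0,1/2)$ produces a finite list of candidate walls. I would then discard configurations not realized by any genuine $(X,D)\in\mb{P}\mtc{E}$, and for each surviving value exhibit an explicit pair, strictly semistable at that value, whose orbit closure jumps --- hence whose class in the quotient changes --- as $t$ crosses it; this should leave exactly $t_1=\frac1{14},\dots,t_6=\frac13$, with $t_0=0$ and $t_7=\frac12$ the endpoints. Substituting into the inverse of $t(c)=\frac{5c}{4+2c}$ gives $c_1=\frac1{17},\dots,c_6=\frac4{13}$. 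That this substitution is the correct dictionary, rather than an arithmetic accident, follows by combining Theorem \ref{12} with the identity $\mb{P}\mtc{E}^K(c)=\mb{P}\mtc{E}^{\GIT}(t(c))$ in every chamber, which one proves by comparing the Hilbert--Mumford weight of a destabilizing $\lambda$ on $\mb{P}\mtc{E}$ with the generalized Futaki invariant of the induced test configuration of $(\Sigma,cC)$: the two agree up to the universal rescaling recorded by $t(c)$.

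To decide divisorial contraction versus flip at an interior wall $t_i$, let $\lambda_i$ be the (conjugacy class of) one-parameter subgroup whose weight changes sign there, with $\mb{G}_m$-fixed locus $Z_i\subset\mb{P}\mtc{E}$ and Bia\l ynicki-Birula strata $Z_i^\pm$ (pairs whose equations involve only monomials of $\lambda_i$-weight $\ge 0$, resp. $\le 0$). The two wall-crossing morphisms out of $\ove{M}^{\GIT}(t_i\mp\epsilon)$ contract the images of $Z_i^-$, resp. $Z_i^+$, so $t_i$ is a flip precisely when both images have codimension $\ge 2$ in the $15$-dimensional quotient, and a divisorial contraction when one of them is a divisor. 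For each $i$ this is a dimension count: $\dim Z_i^\pm$ is the number of free coefficients in the restricted equations minus the dimension of the associated parabolic. The expectation is that for $i\ge 2$ both strata drop the dimension by at least two transverse to the contracted locus (flips), whereas at $t_1=\frac1{14}$ the contracted locus is the divisor along which $X$ is first allowed to leave the $\Aut(\Sigma)$-orbit of the quintic del Pezzo --- matching the transition out of $|-2K_\Sigma|\sslash\mathfrak{S}_4$ in Theorem \ref{12}, geometrically the closure of the trigonal (equivalently, non-$\Sigma$-embeddable) locus --- giving the unique divisorial contraction.

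The main obstacle is the bookkeeping in the two middle steps. Because $\mb{P}\mtc{E}$ is a projective bundle rather than a single projective space, the Hilbert--Mumford function genuinely couples the stability of $X$ with that of $D$ through $t$, so the list of extremal one-parameter subgroups and worst-monomial patterns is long, and one must argue carefully that no spurious wall survives and none is missed --- in particular that the del Pezzo degenerations permitted on either side of each wall are exactly those the K-moduli description predicts. Once the walls and their $\lambda_i$ are pinned down the dimension counts are comparatively routine; the delicate point is to verify that the flipping locus at $t_1$ is honestly a divisor and not a stratum of higher codimension, which is precisely what makes $t_1=\frac1{14}$ the exception.
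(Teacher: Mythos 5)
Your VGIT core is essentially the paper's own strategy: the Hilbert--Mumford criterion on $\mb{P}\mtc{E}$ with the linearization $\eta+t\xi$, a finite case analysis of destabilizing one-parameter subgroups together with explicit strictly semistable pairs realizing each candidate wall, and then dimension counts on the two exceptional loci (Thaddeus' criterion) to separate the divisorial contraction at $t_1$ from the five flips. The paper anchors the finite case analysis on the classification of irreducible surfaces of class $\mtc{O}_{\mb{P}^1\times\mb{P}^2}(1,2)$ (Lemma \ref{11}: smooth, one or two $A_1$, one $A_2$, one $A_3$, or non-normal) and then, for each singularity type and each position and multiplicity of $D$ at the singular point, computes the exact threshold in Section 3; your raw enumeration of weight-polytope states would accomplish the same thing, so this is a difference of bookkeeping rather than of substance, although in your write-up the completeness of the wall list and the dimension counts are still announced as expectations rather than carried out, and that is precisely where the paper's actual content lies.

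Two points are genuine gaps. First, your dictionary with the K-moduli walls rests on the claim that $\mb{P}\mtc{E}^K(c)=\mb{P}\mtc{E}^{\GIT}(t(c))$ in \emph{every} chamber, to be proved by comparing the Hilbert--Mumford weight of a destabilizing $\lambda$ with the Futaki invariant of the induced test configuration. Together with the proportionality $\Lambda_{\CM,f,c}\sim 3(1-2c)^2(4+2c)\bigl(\eta+t(c)\xi\bigr)$, such a comparison yields only one inclusion, namely $\mb{P}\mtc{E}^K(c)\subseteq\mb{P}\mtc{E}^{\GIT}(t(c))$: a $\lambda$ in $\SL(2)\times\SL(3)$ with negative $L_{t(c)}$-weight induces a test configuration with negative CM weight, hence K-instability. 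The converse cannot be obtained this way, since K-instability may be witnessed by test configurations or valuations not induced by one-parameter subgroups acting on $\mb{P}\mtc{E}$ (for instance degenerations of $X$ that are no longer $(1,2)$-divisors in $\mb{P}^1\times\mb{P}^2$). The paper establishes the equality of the two semistable loci only in the first chamber $0<c<1/17$ (Theorem \ref{12}), by a separate openness/properness argument, and takes the list of K-moduli walls from the companion works rather than deriving it from the VGIT walls; your proposal asserts strictly more and supplies no argument for the hard direction. Second, your geometric identification of the contracted divisor at $t_1=\frac{1}{14}$ with the closure of the trigonal locus is incorrect: the trigonal locus has codimension two in $\ove{M}_6$ (as the paper's final remark records), and the exceptional divisor at the first wall is the locus of pairs $(X,D)$ with $X$ the quintic del Pezzo with a single $A_1$-singularity and $D$ avoiding the node, which on the curve side corresponds to the Gieseker--Petri divisor. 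The dimension count itself is unaffected (that locus is indeed a divisor, while the replaced locus of curves containing a quadruple line on $\Sigma$ has higher codimension), but the justification you give for why $t_1$ is the unique divisorial wall rests on the wrong geometry.
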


Although the GIT quotients are not birational to $\ove{M}_6$, with the help of the projective bundle $\mb{P}\mtc{E}$, we can identify the K-moduli spaces with some of the log canonical models of $\ove{M}_6$.

Recall that for each $\alpha\in[0,1]$, we define a log canonical model of $\ove{M}_6$ to be $$\ove{M}_6(\alpha):=\Proj\bigoplus_{n\geq0}H^0\left(\ove{M}_6,n(K_{\ove{M}_6}+\alpha\delta)\right),$$ where $\delta=\delta_0+\delta_1+\delta_2+\delta_3$ is the boundary divisor $\ove{M}_6\setminus M_6$. For any genus $g\geq 4$, when varying $\alpha$ from $1$ to $0$, the first three walls are at $\alpha_1=\frac{9}{11}$, $\alpha_2=\frac{7}{10}$ and $\alpha_3=\frac{2}{3}$ (ref. \cite{HH09,HH13,AFS17}). For $g=6$, it is also known that the last non-trivial model is given by the GIT quotient $|-2K_{\Sigma}|\sslash \Aut(\Sigma)$ (ref. \cite{Mul14,Fed18}). The last main result in this paper is the following.

\begin{theorem}\label{16} \textup{(Theorem \ref{14})}
    Let $0\leq c\leq \frac{11}{52}$ be a rational number, and $\alpha(c)=\frac{32-19c}{94-68c}\in \left[\frac{16}{47},\frac{97}{276}\right]$. Then we have an isomorphism $$\ove{M}_6(\alpha(c))\simeq \ove{M}^K(c).$$ In particular, the last walls of the log canonical models $\ove{M}_6(\alpha)$ are $$\left\{\frac{16}{47},\frac{35}{102},\frac{29}{55},\frac{41}{118},\frac{22}{63},\frac{47}{134}\right\}.$$
\end{theorem}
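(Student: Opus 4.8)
The plan is to bootstrap from the two previous theorems (the VGIT–K-moduli comparison and the wall list) together with the known structure of the Hassett–Keel program for $\ove{M}_6$. First I would recall the fundamental identification that the last non-trivial log canonical model $\ove{M}_6(\alpha)$ for small $\alpha$ is the GIT quotient $|-2K_\Sigma|\sslash\Aut(\Sigma)=|-2K_\Sigma|\sslash\mathfrak{S}_5$, due to M\"uller and Fedorchuk; by Theorem \ref{12} this is exactly $\ove{M}^K(c)$ for $c\in(0,1/17)$. So the endpoint case $c=0$ (equivalently $\alpha=\alpha(0)=\tfrac{16}{47}$) is essentially already in the literature, and the content of the theorem is to extend this identification across the whole interval $c\in[0,\tfrac{11}{52}]$, matching the six K-moduli walls $c_0=0,\dots,c_6=\tfrac{4}{13}$ (of which only $c_0,\dots,c_5$ and the midpoint up to $\tfrac{11}{52}$ are relevant) to the Hassett–Keel walls. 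The strategy is the standard one for this kind of result: produce a morphism between the two families of spaces that is an isomorphism, by comparing the polarizations and using that both sides are normal projective varieties with Picard rank one in the relevant range.

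The key steps, in order, are as follows. (1) Set up the universal family over the K-moduli stack and compute the CM line bundle $\lambda_{\mathrm{CM},c}$ as a function of $c$ in terms of natural divisor classes (Hodge class, boundary classes) pulled back from $\ove{M}_6$; this uses the GIT/VGIT picture from Theorem \ref{13}, because on $\mb{P}\mtc{E}$ the relevant linearization is $L_t$ and the relation $t(c)=\tfrac{5c}{4+2c}$ translates the CM polarization into the VGIT polarization. (2) Identify, via the generically $5:1$ correspondence of Theorem \ref{12}, the pushforward/pullback of these classes between $\ove{M}^{\GIT}(t)$ and $\ove{M}^K(c)$, and then between $\ove{M}^K(c)$ and $\ove{M}_6$ through the canonically-embedded-curve construction $C\hookrightarrow\Sigma$. (3) Match the K-moduli walls $c_i$ with the Hassett–Keel walls by a direct linear computation: show that the affine change of variable $\alpha(c)=\tfrac{32-19c}{94-68c}$ sends $\{c_0,\dots,c_6\}$ to the claimed set $\{\tfrac{16}{47},\tfrac{35}{102},\tfrac{29}{55},\tfrac{41}{118},\tfrac{22}{63},\tfrac{47}{134}\}$ and that it is the unique affine map forced by matching the CM divisor class with $K_{\ove{M}_6}+\alpha\delta$ on the generic point and at the first wall. (4) Conclude: both $\ove{M}_6(\alpha(c))$ and $\ove{M}^K(c)$ are $\Proj$ of the section ring of (a positive multiple of) the same divisor class on a common resolution — or, more robustly, show that the natural rational map is a morphism in both directions by verifying that the CM-ample class on the K-side is the pullback of an ample class on the Hassett–Keel side and vice versa — hence they are isomorphic, compatibly as $c$ (equivalently $\alpha$) varies, so the wall-crossings correspond.

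The main obstacle I expect is Step (1) together with the boundary bookkeeping in Step (2): one must compute the CM line bundle on the K-moduli space precisely enough to pin down the coefficient of each boundary divisor $\delta_i$, $i=0,1,2,3$, and verify it agrees (after the degree-$5$ normalization coming from $\Aut(\Sigma)=\mathfrak{S}_5$ versus the $\mathfrak{S}_4$ stabilizer on $\mb{P}^9$) with the class $K_{\ove{M}_6}+\alpha(c)\delta$. This requires understanding exactly which boundary strata of $\ove{M}_6$ survive into the K-moduli spaces and how badly singular curves in $|-2K_\Sigma|$ (or on degenerations of $\Sigma$) are replaced, i.e. which boundary divisors get contracted at $t_1=\tfrac{1}{14}$ (the divisorial contraction flagged in Theorem \ref{13}) and what happens at the flipping walls. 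A subtlety is that the identification $\ove{M}_6(\alpha)\cong|-2K_\Sigma|\sslash\mathfrak{S}_5$ at the small-$\alpha$ end is only known up to the first Hassett–Keel wall past $\alpha_3$, so one also needs the VGIT analysis to certify that no further walls of $\ove{M}_6(\alpha)$ occur in the open interval $\alpha\in\bigl(\tfrac{97}{276},\tfrac{16}{47}\bigr)$ beyond those produced by the six $c_i$ — this is where the completeness of the wall list in Theorem \ref{13} is essential.
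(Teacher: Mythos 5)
Your overall plan --- pull the CM polarization and $K_{\ove{M}_6}+\alpha\delta$ back to the parameter space $U\subseteq\mb{P}\mtc{E}$, compare slopes to derive $\alpha(c)$, and then identify the two spaces as $\Proj$ of the same section ring using normality --- is indeed the paper's strategy. But the central mechanism is missing from your proposal, and it is exactly the point you flag as ``the main obstacle'': the two classes do \emph{not} agree, even up to positive scaling. What the paper proves is that $K_{\ove{M}_6}+\alpha\delta-s\,\ove{\varphi}^{*}\Lambda_{\CM,c}$ is a $\ove{\varphi}$-exceptional $\mb{Q}$-divisor, and then pins it down using M\"uller's explicit class computation: for the small-$c$ map it equals $(35/2-51\alpha)\ove{\mtc{GP}}_6+(9-11\alpha)\delta_1+(19-29\alpha)\delta_2+(34-96\alpha)\delta_3$. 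For $1/17<c\leq 11/52$ the Gieseker--Petri divisor is no longer exceptional (it maps birationally onto the exceptional divisor of the weighted blow-up $\ove{M}^K(1/17+\varepsilon)\to\ove{M}^K(1/17)$), so the discrepancy becomes $(9-11\alpha)\delta_1+(19-29\alpha)\delta_2+(34-96\alpha)\delta_3$, which is effective precisely when $\alpha\leq 34/96$; effectivity plus exceptionality (together with Proposition \ref{3}, which guarantees $\Delta_1,\Delta_2,\Delta_3$ are not sent into the blow-up center) is what makes the section rings of $n(K_{\ove{M}_6}+\alpha\delta)$ and of $ns\Lambda_{\CM,c}$ coincide, and this bound is exactly where the hypothesis $c\leq 11/52$ comes from. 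Your proposal, which asks to ``verify agreement'' of the classes, would fail as stated and never explains the endpoint $11/52$; without the effective-exceptional-discrepancy argument there is no way to conclude.

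Two smaller corrections. First, the degree-$5$ / $\mtf{S}_4$-versus-$\mtf{S}_5$ normalization you worry about is a red herring for this theorem: the comparison with $\ove{M}_6$ goes directly through the birational map $\varphi\colon V\to\ove{M}^K(c)$, $C\mapsto(X,C)$ with $X$ the unique del Pezzo quintic containing $C$, not through the GIT quotient $\ove{M}^{\GIT}(t)$, so no factor of $5$ enters. Second, the relation $\alpha(c)=\frac{32-19c}{94-68c}$ is obtained not by matching at ``the generic point and the first wall'' via an affine map (it is a M\"obius transformation, not affine), but by the concrete intersection-theoretic computation $\psi^{*}\lambda=6\eta+6\xi$ and $\psi^{*}\delta_0=46\eta+47\xi$ on test curves in $\mb{P}\mtc{E}$, compared with $\phi^{*}\Lambda_{\CM,c}=3(1-2c)^2(4+2c)\bigl(\eta+\frac{5c}{4+2c}\xi\bigr)$; these pullback computations are an essential input your outline leaves unspecified.
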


\textbf{Acknowledgements} The author would like to thank Izzet Coskun, Maksym Fedorchuk, Yuchen Liu and Fei Si for many stimulating discussions, and Ben Gould for comments on the draft.

\section{Preliminaries}

In this section, we collect some facts about the projective bundle we consider and state some results in VGIT.

\subsection{Geometry of projective bundles}
Let $\mb{P}^{11}=\mb{P}H^0(\mb{P}^1\times \mb{P}^2,\mtc{O}_{\mb{P}^1\times \mb{P}^2}(1,2))$ be the projective space parameterizing quintic del Pezzo surfaces in $\mb{P}^1\times \mb{P}^2$, and $\mts{Q}\subseteq \mb{P}^1\times \mb{P}^2\times\mb{P}^{11}$ be the universal quintic. Let $\pi_1:\mb{P}^1\times \mb{P}^2\times\mb{P}^{11}\rightarrow\mb{P}^1\times \mb{P}^2$ and $\pi_2:\mb{P}^1\times \mb{P}^2\times\mb{P}^{11}\rightarrow\mb{P}^{11}$ be the two projections. Consider the locally free sheaf $$\mtc{E}:=\pi_{2*}(\mtc{O}_{\mts{Q}}\otimes\pi_{1}^{*}\mtc{O}_{\mb{P}^1\times\mb{P}^2}(2,2))$$ of rank 16 on $\mb{P}^{11}$ and set $\pi:\mb{P}\mtc{E}\rightarrow\mb{P}^{11}$ the corresponding projective bundle. Then $$\Pic(\mb{P}\mtc{E})\simeq \mb{Z}\eta\oplus \mb{Z}\xi,$$ where $\eta=\pi^{*}\mtc{O}_{\mb{P}^{11}}(1)$ and $\xi=\mtc{O}_{\mb{P}\mtc{E}}(1)$.

We first compute the Chern characters of $\mtc{E}$. Notice that $\mtc{E}$ fits into the short exact sequence $$0\longrightarrow \pi_{2*}(\mtc{I}_{\mts{Q}}\otimes\pi_{1}^{*}\mtc{O}_{\mb{P}^1\times \mb{P}^2}(2,2))\longrightarrow \pi_{2*}(\pi_{1}^{*}\mtc{O}_{\mb{P}^1\times \mb{P}^2}(2,2))\longrightarrow \mtc{E}\longrightarrow 0,$$ and that $$\pi_{2*}(\mtc{I}_{\mts{Q}}\otimes\pi_{1}^{*}\mtc{O}_{\mb{P}^1\times \mb{P}^2}(2,2))=H^0(\mb{P}^1\times \mb{P}^2,\mtc{O}_{\mb{P}^1\times \mb{P}^2}(1,0))\otimes \mtc{O}_{\mb{P}^{11}}(-1),$$ $$\pi_{2*}(\pi_{1}^{*}\mtc{O}_{\mb{P}^1\times \mb{P}^2}(2,2))=H^0(\mb{P}^1\times \mb{P}^2,\mtc{O}_{\mb{P}^1\times \mb{P}^2}(2,2))\otimes \mtc{O}_{\mb{P}^{11}}.$$  It follows that $$\che(\mtc{E})=18-2\che(\mtc{O}_{\mb{P}^9}(-1))=18-2\sum_{i=0}^{\infty}\frac{(-1)^i\eta^i}{i!}.$$ In particular, we have $c_1(\mtc{E})=2\eta$ and hence the following result. 

\begin{prop}
    The canonical line bundle of $\mb{P}\mtc{E}$ is $$K_{\mb{P}\mtc{E}}\sim -14\eta-16 \xi.$$ 
\end{prop}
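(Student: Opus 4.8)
The plan is to compute $K_{\mathbb{P}\mathcal{E}}$ via the relative Euler sequence for the projective bundle $\pi:\mathbb{P}\mathcal{E}\to\mathbb{P}^{11}$, combined with the standard adjunction-type formula $K_{\mathbb{P}\mathcal{E}} = \pi^*K_{\mathbb{P}^{11}} + K_{\mathbb{P}\mathcal{E}/\mathbb{P}^{11}}$. First I would record $K_{\mathbb{P}^{11}} = \mathcal{O}_{\mathbb{P}^{11}}(-12)$, so $\pi^*K_{\mathbb{P}^{11}} = -12\eta$ (note $\mathbb{P}^{11}$ here, though the excerpt's Chern character line writes $\mathbb{P}^9$; I would double-check the indexing — the parameter space of del Pezzo quintics is $\mathbb{P}^{11} = \mathbb{P}H^0(\mathcal{O}(1,2))$, which has dimension $11$, giving $K_{\mathbb{P}^{11}} = -12\eta$). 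Then for the relative canonical bundle, the Euler sequence
\begin{equation*}
0 \longrightarrow \mathcal{O}_{\mathbb{P}\mathcal{E}} \longrightarrow \pi^*\mathcal{E}^\vee \otimes \mathcal{O}_{\mathbb{P}\mathcal{E}}(1) \longrightarrow T_{\mathbb{P}\mathcal{E}/\mathbb{P}^{11}} \longrightarrow 0
\end{equation*}
(with the Grothendieck convention that $\mathbb{P}\mathcal{E}$ parameterizes rank-one quotients, matching $\pi_*\mathcal{O}_{\mathbb{P}\mathcal{E}}(1) = \mathcal{E}$) gives, upon taking determinants, $K_{\mathbb{P}\mathcal{E}/\mathbb{P}^{11}} = -\mathrm{rk}(\mathcal{E})\,\xi + \pi^*\det\mathcal{E}^\vee \otimes \mathcal{O} = -16\xi - c_1(\mathcal{E})$ as a twist; more precisely $K_{\mathbb{P}\mathcal{E}/\mathbb{P}^{11}} \sim -16\xi + \pi^*\det\mathcal{E}^\vee$.

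Next I would assemble the pieces: since $c_1(\mathcal{E}) = 2\eta$ was computed just above (from $\mathrm{ch}(\mathcal{E}) = 18 - 2\,\mathrm{ch}(\mathcal{O}(-1))$), we get $\pi^*\det\mathcal{E}^\vee = -2\eta$, hence
\begin{equation*}
K_{\mathbb{P}\mathcal{E}} = \pi^*K_{\mathbb{P}^{11}} + K_{\mathbb{P}\mathcal{E}/\mathbb{P}^{11}} = -12\eta + (-16\xi - 2\eta) = -14\eta - 16\xi.
\end{equation*}
This matches the claimed formula. The only subtlety is getting the sign and placement of $c_1(\mathcal{E})$ in the relative canonical bundle formula correct, which depends on which projectivization convention is in force; I would pin this down by the stated normalization $\xi = \mathcal{O}_{\mathbb{P}\mathcal{E}}(1)$ together with $\pi_{2*}$ appearing (rather than a dual) in the definition of $\mathcal{E}$, so that $\mathbb{P}\mathcal{E} = \mathrm{Proj}_{\mathbb{P}^{11}}(\mathrm{Sym}\,\mathcal{E}^\vee)$ parameterizes one-dimensional quotients of $\mathcal{E}$, i.e. the fiber over a quintic $X$ is $\mathbb{P}H^0(X, \mathcal{O}_X(2,2))$ of curves $C$ in the linear system, as intended.

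The main obstacle — really the only one, since everything else is a short exact sequence and a determinant — is bookkeeping the conventions consistently: whether one writes $-12$ or $-14$ for $K_{\mathbb{P}^{11}}$ or its pullback, and whether the Euler sequence contributes $+c_1(\mathcal{E})$ or $-c_1(\mathcal{E})$. A clean way to sidestep convention worries entirely is to instead argue numerically on a curve: restrict to a fiber $F \cong \mathbb{P}^{15}$ of $\pi$, where $\xi|_F = \mathcal{O}_{\mathbb{P}^{15}}(1)$ and $\eta|_F = 0$, so $K_F = -16\xi|_F$ forces the $\xi$-coefficient of $K_{\mathbb{P}\mathcal{E}}$ to be $-16$; then restrict to a section or to a line in the base to fix the $\eta$-coefficient via $K_{\mathbb{P}^{11}} = -12\eta$ plus the $\mathcal{O}_{\mathbb{P}\mathcal{E}}(1)$-twist correction $-c_1(\mathcal{E}) = -2\eta$, yielding $-14\eta$. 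Either route gives $K_{\mathbb{P}\mathcal{E}} \sim -14\eta - 16\xi$.
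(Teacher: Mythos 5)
Your overall route is the same as the paper's: the paper simply quotes the canonical bundle formula for a projective bundle (citing \cite[Section 7.3.A]{Laz04}), $K_{\mb{P}\mtc{E}}=\pi^{*}(\omega_{\mb{P}^{11}}\otimes\det\mtc{E}^{*})\otimes\mtc{O}_{\mb{P}\mtc{E}}(-\rk\mtc{E})$, and plugs in $\omega_{\mb{P}^{11}}=-12\eta$, $c_1(\mtc{E})=2\eta$, $\rk\mtc{E}=16$; you derive that same formula from the relative Euler sequence. Your numerical inputs are right (and your remark that the $\mb{P}^9$ in the Chern character display should be $\mb{P}^{11}$ is a correct catch), and your final answer agrees with the paper.

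However, the one step you yourself single out as the only subtlety is handled inconsistently. The Euler sequence you write, $0\to\mtc{O}\to\pi^{*}\mtc{E}^{\vee}\otimes\mtc{O}_{\mb{P}\mtc{E}}(1)\to T_{\mb{P}\mtc{E}/\mb{P}^{11}}\to 0$, is the one attached to the convention $\pi_{*}\mtc{O}_{\mb{P}\mtc{E}}(1)=\mtc{E}$ (one-dimensional quotients of $\mtc{E}$), and taking determinants of it gives $K_{\mb{P}\mtc{E}/\mb{P}^{11}}=-16\xi+\pi^{*}\det\mtc{E}=-16\xi+2\eta$, not $-16\xi-2\eta$ as you assert; with your stated convention you would land on $-10\eta-16\xi$. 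Moreover your parenthetical identification is internally contradictory: $\mathrm{Proj}_{\mb{P}^{11}}(\Sym\mtc{E}^{\vee})$ parameterizes one-dimensional quotients of $\mtc{E}^{\vee}$, i.e.\ lines in $\mtc{E}$, and satisfies $\pi_{*}\mtc{O}(1)=\mtc{E}^{\vee}$, not $\mtc{E}$. The convention that is geometrically correct here (and the one the paper's quoted formula with $\det\mtc{E}^{*}$ encodes) is precisely this ``lines in $\mtc{E}$'' one, since the fiber over $[X]$ must be the linear system $\mb{P}H^0(X,\mtc{O}_X(2,2))$ of lines in $\mtc{E}_{[X]}$; its Euler sequence has $\pi^{*}\mtc{E}\otimes\mtc{O}(1)$ in the middle and yields $K_{\mb{P}\mtc{E}/\mb{P}^{11}}=-16\xi-\pi^{*}c_1(\mtc{E})$, giving $-14\eta-16\xi$. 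So your conclusion is correct, but only after fixing the convention to match the formula you actually used; as written, the sequence you display contradicts the determinant you take from it. The same caveat applies to your closing ``numerical'' argument: restricting to a fiber does pin the $\xi$-coefficient at $-16$, but the $\eta$-coefficient argument again just re-imports the sign of the $c_1(\mtc{E})$ correction, so it does not sidestep the convention issue.
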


\begin{proof} 

By \cite[Section 7.3.A]{Laz04}, we have that $$K_{\mb{P}\mtc{E}}=\pi^{*}(\omega_{\mb{P}^{11}}\otimes \det(\mtc{E}^{*}))\otimes \mtc{O}_{\mb{P}\mtc{E}}(-\rk\mtc{E})=-14\eta-16\xi.$$
\end{proof}

Let $U\subseteq \mb{P}\mtc{E}$ be the Zariski big open subset parameterizing complete intersections $(X,D)$ with $X$ irreducible. Let $i:(\mathscr{X},\mts{D})\hookrightarrow \mb{P}^1\times\mb{P}^2\times U$ be the tautological family of the pairs, $p_1,p_2$ the projection maps from $(\mb{P}^1\times\mb{P}^2)\times U$ to the two factors, respectively, and $f:=p_2\circ i$. In summary, we have the following commutative diagram
    $$ \xymatrix{
    (\mts{X},\mts{D}) \ar[dd]_{f} \ar@{^(->}[r]^{i\quad} & (\mb{P}^1\times\mb{P}^2)\times U \ar[ddl]^{p_2} \ar[r]^{\quad p_1} &  \mb{P}^1\times\mb{P}^2 \\
    \\
    U  \ar[r]^{\pi} & \mb{P}^{11} }.$$

\begin{prop}
The nef cone of $\mb{P}\mtc{E}$ is generated by $\eta$ and $\eta+\frac{1}{2}\xi$.
\end{prop}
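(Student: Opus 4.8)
The plan is to compute the nef cone of $\mathbb{P}\mathcal{E}$ by exhibiting two boundary rays and showing each is extremal. Since $\Pic(\mathbb{P}\mathcal{E}) \simeq \mathbb{Z}\eta \oplus \mathbb{Z}\xi$ has rank $2$, the nef cone is a closed $2$-dimensional cone, so it suffices to identify its two edges. One edge is clear: $\eta = \pi^*\mathcal{O}_{\mathbb{P}^{11}}(1)$ is the pullback of an ample class under the projective bundle map, hence nef but not ample (it is trivial on fibers of $\pi$), so $\eta$ spans one extremal ray. For the other edge, I want to show that $\eta + \tfrac12\xi$ is nef and that no class $\eta + \lambda\xi$ with $\lambda > \tfrac12$ is nef.

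**Identifying the second ray via the natural morphism.** The key geometric input is that the tautological quotient on $\mathbb{P}\mathcal{E}$ corresponds to the evaluation map sending a point (a del Pezzo surface $X$ together with a section defining $D\in|\mathcal{O}_X(2,2)|$) to the actual divisor; concretely, restricting a section of $\mathcal{O}_{\mathbb{P}^1\times\mathbb{P}^2}(2,2)$ to $X$ is a surjection $H^0(\mathbb{P}^1\times\mathbb{P}^2,\mathcal{O}(2,2))\otimes\mathcal{O}_{\mathbb{P}^{11}}\twoheadrightarrow\mathcal{E}$. Dualizing, $\mathcal{E}^\vee \hookrightarrow H^0(\mathbb{P}^1\times\mathbb{P}^2,\mathcal{O}(2,2))^\vee\otimes\mathcal{O}_{\mathbb{P}^{11}}$, which — after the standard identification of $\mathbb{P}\mathcal{E}$ (Grothendieck convention) with the space of one-dimensional subspaces in fibers of $\mathcal{E}^\vee$, or equivalently using that $\mathcal{O}_{\mathbb{P}\mathcal{E}}(1)$ is globally generated by pulled-back sections together with $\eta$ — produces a morphism $\phi\colon \mathbb{P}\mathcal{E} \to \mathbb{P}(H^0(\mathbb{P}^1\times\mathbb{P}^2,\mathcal{O}(2,2))^\vee) \times \mathbb{P}^{11}$ with $\phi^*(\mathcal{O}(1)\boxtimes\mathcal{O}) = \xi$ and $\phi^*(\mathcal{O}\boxtimes\mathcal{O}(1)) = \eta$ — wait, I must be careful with the $(1,0)$-twist appearing in the exact sequence for $\mathcal{E}$, so more precisely the linear system $|\xi|$ may only be shown big-and-semiample after twisting by $\eta$. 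The clean statement to aim for is: $\xi + a\eta$ is base-point free for suitable $a$, and the associated morphism contracts exactly the locus forcing $\eta + \tfrac12\xi$ (not a larger multiple of $\xi$) to be the nef boundary. I would pin down the precise $a$ by tracking the $\mathcal{O}(1,0)\otimes\mathcal{O}_{\mathbb{P}^{11}}(-1)$ sub-line-bundle from the displayed short exact sequence: a section of $\xi$ over a point of $\mathbb{P}^{11}$ where $X$ degenerates corresponds to a functional on $H^0(X,\mathcal{O}_X(2,2))$, and the kernel of the restriction map varies so that one needs the $\eta$-correction.

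**Checking nefness of $\eta+\tfrac12\xi$ on curves.** Concretely I would verify $\eta + \tfrac12\xi$ is nef by intersecting against an arbitrary irreducible curve $Z \subset \mathbb{P}\mathcal{E}$. If $\pi(Z)$ is a point, $Z$ lies in a fiber $\mathbb{P}(\mathcal{E}_x) \cong \mathbb{P}^{15}$, on which $\eta|_Z = 0$ and $\xi|_Z = \mathcal{O}_{\mathbb{P}^{15}}(1)|_Z$, giving $(\eta+\tfrac12\xi)\cdot Z = \tfrac12\deg(Z) \ge 0$. If $\pi(Z)$ is a curve, then I would use that $\mathcal{E}(1) := \mathcal{E}\otimes\mathcal{O}_{\mathbb{P}^{11}}(1)$ is globally generated — this follows from the middle term $H^0(\mathcal{O}(2,2))\otimes\mathcal{O}_{\mathbb{P}^{11}}(1)$ of the twisted exact sequence being globally generated and surjecting onto $\mathcal{E}(1)$ — hence $\xi + \eta = \mathcal{O}_{\mathbb{P}(\mathcal{E}(1))}(1)$ is nef, and combining $\tfrac12(\xi+\eta)$ nef with $\tfrac12\eta$ nef gives $\eta + \tfrac12\xi$ nef. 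For the sharpness, i.e. that $\eta + \lambda\xi$ fails to be nef for $\lambda > \tfrac12$: I would produce a curve $Z_0$ with $\eta\cdot Z_0 = 1$ and $\xi\cdot Z_0 = -2$; such a curve arises from the section $\mathcal{O}(1,0)\otimes\mathcal{O}_{\mathbb{P}^{11}}(-1) \hookrightarrow \pi_{2*}\pi_1^*\mathcal{O}(2,2)$ in the displayed sequence, which gives a sub-line-bundle structure over a line $\ell \subset \mathbb{P}^{11}$ forcing the associated section of $\mathbb{P}\mathcal{E}$ over $\ell$ to have the stated intersection numbers.

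**Main obstacle.** The routine part is the fiber-curve computation; the genuine obstacle is correctly extracting the extremal curve $Z_0$ with $\xi \cdot Z_0 < 0$ and with the right ratio $\eta\cdot Z_0 : \xi\cdot Z_0 = 1 : -2$ — equivalently, showing that the morphism defined by $|\xi + \eta|$ (or $|\xi + a\eta|$ for the correct $a$) contracts a curve in the $\eta + \tfrac12\xi$ direction rather than some other slope. This hinges on a careful analysis of the sub-line-bundle $\mathcal{O}(1,0)\otimes\mathcal{O}_{\mathbb{P}^{11}}(-1)$ and what it does to the projectivization: it is exactly the failure of the restriction map $H^0(\mathbb{P}^1\times\mathbb{P}^2,\mathcal{O}(2,2)) \to H^0(X,\mathcal{O}_X(2,2))$ to be injective (its kernel is $(\text{equation of }X)\cdot H^0(\mathcal{O}(1,0))$) that produces the relevant negative curve, and getting the normalization of $\mathbb{P}\mathcal{E}$ versus $\mathbb{P}(\mathcal{E}^\vee)$ conventions consistent throughout is where care is needed.
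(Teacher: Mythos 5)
Your plan has the right shape (one extremal ray from $\eta$, nefness of $\eta+\tfrac12\xi$ checked on fiber curves and horizontal curves, and an extremal curve $Z_0$ with $\eta\cdot Z_0=1$, $\xi\cdot Z_0=-2$; the paper itself simply invokes \cite[Theorem 2.7]{Ben14} for the second ray), but both substantive steps fail as written, and both failures come from the projectivization convention that you flag but never resolve. Here $\mathbb{P}\mathcal{E}$ must be the projectivization parameterizing \emph{lines} in the fibers of $\mathcal{E}$, i.e.\ $\mathrm{Proj}\,\mathrm{Sym}\,\mathcal{E}^{\vee}$: this is what makes the fiber over $[X]$ equal to the linear system $|\mathcal{O}_X(2,2)|$ (and yields the universal divisor of class $\mathcal{O}(2,2)\boxtimes\xi$), and it is also what the paper's formula $K_{\mathbb{P}\mathcal{E}}=-14\eta-16\xi$ forces, since the quotient convention would give $-10\eta-16\xi$. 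With this convention, global generation of $\mathcal{E}\otimes\mathcal{O}_{\mathbb{P}^{11}}(a)$ says nothing about nefness of $\xi+a\eta$; your inference ``$\mathcal{E}(1)$ globally generated $\Rightarrow$ $\xi+\eta$ nef'' is the statement for the quotient convention, and applied consistently it would equally prove that $\xi$ itself is nef (because $\mathcal{E}$, being a quotient of $\mathcal{O}^{\oplus 18}$, is already globally generated), contradicting the very proposition you are proving. The correct horizontal-curve argument is elementary: a map from a smooth curve $C$ to $\mathbb{P}\mathcal{E}$ lying over $h\colon C\to\mathbb{P}^{11}$ is the same as a line subbundle $L\subset h^{*}\mathcal{E}$, and $\xi$ pulls back to $L^{\vee}$; if $W\subset\mathcal{O}_C^{\oplus 18}$ denotes the preimage of $L$ under the pulled-back surjection $\mathcal{O}^{\oplus 18}\to\mathcal{E}$, then $\deg W=\deg L-2\deg h^{*}\mathcal{O}_{\mathbb{P}^{11}}(1)\le 0$ since $W$ is a subsheaf of a trivial bundle, i.e.\ $(2\eta+\xi)\cdot C\ge 0$; this also covers curves in fibers, where $\deg L\le 0$.

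The second gap is the extremal curve. The source you propose, the sub-line-bundle $H^0(\mathcal{O}_{\mathbb{P}^1\times\mathbb{P}^2}(1,0))\otimes\mathcal{O}_{\mathbb{P}^{11}}(-1)$ over an arbitrary line, cannot give $Z_0$: it is exactly the kernel of $\mathcal{O}^{\oplus 18}\to\mathcal{E}$, so it maps to zero in $\mathcal{E}$ and defines no curve in $\mathbb{P}\mathcal{E}$ at all (and even if it injected it would have the wrong degree, $-1$ per factor, not $+2$). What is needed is a line subbundle of $\mathcal{E}|_{\ell}$ of degree $2$ over some line $\ell\subset\mathbb{P}^{11}$, and this requires a \emph{special} pencil. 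For instance, fix a conic $A(x,y,z)$ and let $\ell$ be the pencil $F_s=(s_0u+s_1v)A$ (its members are reducible surfaces, which is allowed since the nef cone concerns all of $\mathbb{P}\mathcal{E}$). Then $uF_s$ and $vF_s$ lie in the fixed $3$-dimensional space $V_3=\langle u^2A,\,uvA,\,v^2A\rangle$ for every $s$, so $L:=(V_3\otimes\mathcal{O}_{\ell})\big/\mathrm{im}\bigl(H^0(\mathcal{O}(1,0))\otimes\mathcal{O}_{\ell}(-1)\bigr)$ is a line bundle of degree $2$ embedding in $\mathcal{E}|_{\ell}$; the corresponding section $Z_0$ of $\mathbb{P}\mathcal{E}$ over $\ell$ has $\eta\cdot Z_0=1$ and $\xi\cdot Z_0=-2$, so $\eta+\lambda\xi$ is not nef for $\lambda>\tfrac12$. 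With these two repairs your outline closes up into a complete proof; as written, the nefness step proves a statement about the wrong projectivization and the extremality step is not established.
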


\begin{proof}
    It is clear that $\eta$ is an extremal ray. The argument in \cite[Theorem 2.7]{Ben14} shows that the other extremal ray is $\eta+\frac{1}{2}\xi$.
\end{proof}

\subsection{Variation of GIT}
Now our main object is a polarized variety $(X,L)$ together with an action by a reductive group $G$.

\begin{lemma}\textup{(ref. \cite[Lemma 3.10]{Laz13})}\label{8}
Let $(X,L)$ be a polarized projective variety, and $G$ a reductive group acting on $(X,L)$. Let $L_0$ be a $G$-linearized line bundle on $X$. For any rational number $0<\varepsilon\ll1$, we define $L_{\pm \varepsilon}:=L\otimes L_0^{\otimes (\pm \varepsilon)}$. Let $X^{ss}(0)$ and $X^{ss}(\pm)$ denote the semistable loci, respectively. Then
\begin{enumerate}[(i)]
\item there are open immersions $X^{ss}(\pm)\subseteq X^{ss}(0)$, and
\item for any $x\in X^{ss}(0)\setminus X^{ss}(\pm)$, there is a 1-parameter subgroup $\sigma$ of $G$ such that $$\mu^{L}(x,\sigma)=0,\quad \textup{and}\quad \mu^{L_{\pm}}(x,\sigma)<0.$$
\end{enumerate}
\end{lemma}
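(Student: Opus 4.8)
The plan is to reduce the whole statement to the Hilbert--Mumford numerical criterion and to exploit that the numerical invariant is linear in the linearization. For a one--parameter subgroup $\sigma$ of $G$ one has the additivity of $\mu^{(-)}(x,\sigma)$ in tensor powers, hence in $\mathbb{Q}$--combinations, of $G$--linearized line bundles, so that
$$\mu^{L_{\pm\varepsilon}}(x,\sigma)=\mu^{L}(x,\sigma)\pm\varepsilon\,\mu^{L_0}(x,\sigma);$$
for $0<\varepsilon\ll1$ the $\mathbb{Q}$--line bundle $L_{\pm\varepsilon}$ is still ample, so its GIT quotient is defined, and by the numerical criterion $x\in X^{ss}(L')$ if and only if $\mu^{L'}(x,\sigma)\ge0$ for every $\sigma$. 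Since semistable loci are open in $X$, once the \emph{set}--theoretic inclusion in (i) is established it is automatically an open immersion; thus the entire lemma becomes a statement about how the signs of the piecewise-linear functions $\mu^{L}(x,\cdot)$ and $\mu^{L_0}(x,\cdot)$ behave under the perturbation by $\pm\varepsilon$.

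The technical heart, and the only step I expect to be a genuine obstacle, is a finiteness statement. Fix $x$ and a maximal torus $T\subseteq G$; after embedding $X$ equivariantly in a projective space by a large power of a $G$--linearized ample bundle, the $T$--weights occurring in a lift of $x$ form a finite set (the \emph{state} of $x$). Consequently each of $\sigma\mapsto\mu^{L}(x,\sigma)$ and $\sigma\mapsto\mu^{L_0}(x,\sigma)$ is homogeneous of degree one and piecewise linear with respect to a finite rational fan in the cocharacter space $N_{*}(T)_{\mathbb{R}}$, and after passing to a common refinement we may use a single such fan $\Delta$ for both. Because a linear function on a finitely generated polyhedral cone is nonnegative on the whole cone as soon as it is nonnegative on the rays, it suffices, for testing semistability against $1$--PS of $T$, to evaluate $\mu^{L'}(x,\cdot)$ at the finitely many rays $\sigma_{1},\dots,\sigma_{k}$ of $\Delta$; testing $1$--PS of all of $G$ amounts to doing this for every point of the $G$--orbit of $x$, which, since the state takes only finitely many values as the point ranges over $X$, still involves only finitely many combinatorial types. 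This is exactly the wall-and-chamber mechanism in the foundational work of Dolgachev--Hu and Thaddeus, and in \cite{Laz13}.

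Granting this, both parts follow by a one-line comparison. Choose $\varepsilon$ smaller than all the finitely many positive ratios $|\mu^{L}(y,\sigma_{j})|\big/\big(|\mu^{L_0}(y,\sigma_{j})|+1\big)$ coming from the rays $\sigma_j$ attached to the finitely many combinatorial types of points $y\in X$. For (i): if $x\in X^{ss}(\pm)$ but $\mu^{L}(x,\sigma)<0$ for some $\sigma$, we may take $\sigma$ to be such a ray $\sigma_{j}$, and then $\mu^{L_{\pm\varepsilon}}(x,\sigma_{j})=\mu^{L}(x,\sigma_{j})\pm\varepsilon\,\mu^{L_0}(x,\sigma_{j})<0$ by the choice of $\varepsilon$, contradicting $x\in X^{ss}(\pm)$; hence $X^{ss}(\pm)\subseteq X^{ss}(0)$. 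For (ii): given $x\in X^{ss}(0)\setminus X^{ss}(\pm)$, pick a $1$--PS destabilizing $L_{\pm\varepsilon}$ and replace it, as above, by a ray $\sigma_{j}$ with $\mu^{L_{\pm\varepsilon}}(x,\sigma_{j})<0$; since $x\in X^{ss}(0)$ forces $\mu^{L}(x,\sigma_{j})\ge0$ and the smallness of $\varepsilon$ rules out $\mu^{L}(x,\sigma_{j})>0$, we must have $\mu^{L}(x,\sigma_{j})=0$, whence $\mu^{L_{\pm\varepsilon}}(x,\sigma_{j})=\pm\varepsilon\,\mu^{L_0}(x,\sigma_{j})<0$, and $\sigma=\sigma_{j}$ does the job. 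The one delicate point remaining is to verify that a single $\varepsilon$ works uniformly for all $x$ at once, which is precisely what the finiteness of states over $X$ — equivalently, the finiteness of VGIT walls on the segment $\{L\otimes L_0^{t}\}$ — guarantees.
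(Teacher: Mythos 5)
The paper gives no proof of this lemma at all: it is quoted verbatim from the cited reference (Lemma 3.10 of Laza's VGIT survey), so there is nothing internal to compare against. Your argument is the standard one behind that reference (linearity of $\mu^{(-)}(x,\sigma)$ in the linearization plus finiteness of states/walls, as in Thaddeus and Dolgachev--Hu, giving a single $\varepsilon$ that works uniformly and forcing $\mu^{L}(x,\sigma_j)=0$ on any ray destabilizing $L_{\pm\varepsilon}$), and it is essentially correct. The only point to tidy is your use of ``states'' for $L_0$: the weight-set description of $\mu^{L_0}(x,\cdot)$ presupposes an ample (or at least globally generated) linearized bundle, whereas $L_0$ here is arbitrary; this is repaired either by writing $L_0=(L_0\otimes L^{N})\otimes L^{-N}$ with $N\gg0$ and using additivity of $\mu$, or by describing $\mu^{L_0}(x,\cdot)$ through the weights on the fibres over the finitely many limit points $\lim_{t\to0}\sigma(t)\cdot x$, which gives the same piecewise-linear structure on a fan depending only on $x$ and the torus action.
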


For simplicity, we denote $X\sslash_{L_{\bullet}}G$ by $X\sslash_{\bullet}G$, where $\bullet\in\{+,-,0\}$. As a consequence, the functorial properties of the GIT quotients give the following commutative diagram:

  $$ \xymatrix{
    X^{s}(L_{\pm})/G \ar@{^(->}[dd]  & &X^{s}(L_0)/G \ar@{_(->}[ll] \ar@{^(->}[dd]  \\
    \\ 
    X\sslash_{\pm}G  \ar[rr]^{\varphi_{\pm}} & & X\sslash_{0}G },$$ where $X^{s}(L_{\bullet})$ are the stable loci, and $X^s(L_{\bullet})/G$ are geometric quotients.

\begin{theorem} \textup{(ref. \cite[Theorem 3.3]{Tha96})} With notation as in Lemma \ref{8}, if both $X\sslash_{-}G$ and $X\sslash_{+}G$ are non-empty, then $\varphi_{-}$ and $\varphi_{+}$ are proper and birational. If they are both small contractions, then the rational map $$g:X\sslash_{-}G\dashrightarrow X\sslash_{+}G$$ is a flip with respect to $\mtc{O}(1)$ on $X\sslash_{+}G$, which is the relative bundle of the projective morphism $g:X\sslash_{+}G\rightarrow X\sslash_{0}G$.
\end{theorem}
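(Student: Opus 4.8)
The plan is to recover the standard variation-of-GIT wall-crossing picture (Thaddeus, Dolgachev--Hu), deriving everything from Lemma~\ref{8} together with the $\Proj$-description of GIT quotients. First I would set up the morphisms: write $R_{\bullet}:=\bigoplus_{n\ge 0}H^0(X,L_{\bullet}^{\otimes n})^{G}$ for $\bullet\in\{0,+,-\}$, with $n$ ranging over integers clearing the denominator of $\varepsilon$, so that $X\sslash_{\bullet}G=\Proj R_{\bullet}$ is projective. Lemma~\ref{8}(i) makes $X^{ss}(\pm)\subseteq X^{ss}(0)$ an open $G$-immersion, so taking good quotients yields $\varphi_{\pm}\colon X\sslash_{\pm}G\to X\sslash_{0}G$, and properness is automatic because both source and target are proper. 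For birationality I would invoke the standard fact that for $0<\varepsilon\ll 1$ one has $X^{s}(L)\subseteq X^{s}(L_{\pm})\subseteq X^{ss}(L_{\pm})\subseteq X^{ss}(L)$ — stability is open and the normalized weights $\mu^{L_{\pm}}(x,\sigma)$ differ from $\mu^{L}(x,\sigma)$ by $O(\varepsilon)$ uniformly over the finitely many destabilizing ``types'' — so $X^{s}(L)/G$ is a common dense open subscheme of $X\sslash_{0}G$ and of each $X\sslash_{\pm}G$ on which $\varphi_{\pm}$ restricts to the identity, whence $\varphi_{\pm}$ is birational.

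Next I would locate the exceptional loci: $\varphi_{+}$ fails to be an isomorphism exactly over the image $Z^{+}\subseteq X\sslash_{0}G$ of $X^{ss}(0)\setminus X^{ss}(+)$, and symmetrically for $\varphi_{-}$. Under the small-contraction hypothesis $Z^{\pm}$ and $\varphi_{\pm}^{-1}(Z^{\pm})$ have codimension $\ge 2$, so $X\sslash_{-}G$ and $X\sslash_{+}G$ agree in codimension one and $g:=\varphi_{+}^{-1}\circ\varphi_{-}$ is a small birational map over $X\sslash_{0}G$. To upgrade this to a flip I still need a $\varphi_{+}$-ample divisor on $X\sslash_{+}G$ whose strict transform on $X\sslash_{-}G$ is $\varphi_{-}$-anti-ample.

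For that I would take $\mathcal{O}(1)$ on $X\sslash_{+}G=\Proj R_{+}$ — a Veronese power of which descends $L_{+\varepsilon}^{\otimes N}$ from $X^{ss}(+)$, hence is $\varphi_{+}$-ample by construction — and then identify $X\sslash_{+}G=\Proj_{X\sslash_{0}G}\bigoplus_{m\ge0}(\varphi_{-})_{*}(g_{*}\mathcal{O}(1))^{\otimes m}$, which would exhibit $g$ as a flip with respect to $\mathcal{O}(1)$ on $X\sslash_{+}G$ provided $g_{*}\mathcal{O}(1)$ is $\varphi_{-}$-anti-ample. To get that sign I would pass to the master algebra $\mathcal{R}:=\bigoplus_{a\ge 0,\,b}H^0(X,L^{\otimes a}\otimes L_{0}^{\otimes b})^{G}$, whose relevant $\Proj$'s interpolate between the three quotients exactly as chambers in a Mori/toric picture: $X\sslash_{0}G$ lies on the wall $b=0$, and the two tautological bundles on the two adjacent chambers pull back to $L^{\otimes Na}\otimes L_{0}^{\otimes \pm Mb}$, with opposite signs in the $L_{0}$-direction, hence opposite relative signs over $X\sslash_{0}G$. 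Equivalently and more hands-on, Lemma~\ref{8}(ii) supplies, for each point of the exceptional locus, a one-parameter subgroup $\sigma$ with $\mu^{L}(x,\sigma)=0$ and $\mu^{L_{\pm}}(x,\sigma)<0$; this is precisely the numerical input that forces $g_{*}\mathcal{O}(1)$ to pair negatively with the curves contracted by $\varphi_{-}$.

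The formal parts (properness, birationality, the flip diagram) come out directly from Lemma~\ref{8} and the $\Proj$ formalism; the step I expect to be the real obstacle is the final sign computation — making rigorous that the strict transform of $\mathcal{O}(1)$ is the descent of the \emph{same} power of $L_{+\varepsilon}$ taken from the other semistable locus, and reading off the correct relative sign. That requires normality (or at least reflexivity of the relevant sheaves) on the quotients together with a careful comparison of the descents over $X^{ss}(+)$ and $X^{ss}(-)$.
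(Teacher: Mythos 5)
The paper does not prove this statement at all --- it is quoted from \cite[Theorem 3.3]{Tha96} --- so the benchmark is Thaddeus's argument, and your outline does follow its architecture: Lemma~\ref{8}(i) gives the proper morphisms $\varphi_{\pm}$, a common open chart is supposed to give birationality, and a relative-$\Proj$ description over $X\sslash_{0}G$ is supposed to give the flip. Two steps are genuinely incomplete. First, your birationality argument rests on $X^{s}(L)/G$ being a nonempty dense common open subscheme, i.e.\ on $X^{s}(L)\neq\emptyset$; this is not among the hypotheses (only $X\sslash_{\pm}G\neq\emptyset$ is assumed) and does not follow from them. For example, take $X=X_{1}\times X_{2}$ and $G=\SL(2)\times\mb{G}_m$, where $X_{1}=\mb{P}^2$ is the space of binary quadrics with its $\SL(2)$-action (the generic orbit is closed in the semistable locus but has one-dimensional stabilizer, so there are no stable points, and this factor admits no variation of linearization), while $X_{2}$ carries a genuine $\mb{G}_m$-wall with stable points, and $L_0$ is pulled back from $X_{2}$. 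Then $X^{ss}(\pm)\neq\emptyset$ and the conclusion holds (all three quotients are the corresponding quotients of $X_{2}$), but $X^{s}(L_{\bullet})=\emptyset$ for every linearization, so your common chart is empty and the step fails. Thaddeus does not argue through the stable locus; in the application in this paper one does have $X^{s}(L_t)\neq\emptyset$, so your argument would suffice there, but then you have proved less than the statement quoted.

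The second gap is the one you flag yourself, and it is the actual content of the theorem: that the birational transform of $\mtc{O}(1)$ to $X\sslash_{-}G$ is $\varphi_{-}$-anti-ample. Your ``master algebra'' paragraph names the right mechanism but does not execute it, and the identity $X\sslash_{+}G=\Proj_{X\sslash_{0}G}\bigoplus_{m\ge0}(\varphi_{-})_{*}(g_{*}\mtc{O}(1))^{\otimes m}$ is not correct as written: the flip algebra must be the pushforward of the (reflexive) powers of the transformed divisor, $\bigoplus_{m}(\varphi_{-})_{*}\mtc{O}(mH)$, not tensor powers of a pushforward. The clean route, which simultaneously repairs the first gap, is Thaddeus's: for suitable $N$ one identifies $X\sslash_{\pm}G$ with the relative $\Proj$ over $X\sslash_{0}G$ of the non-negatively (resp.\ non-positively) graded parts of a single $\mb{Z}$-graded sheaf of invariant algebras built from $(L^{\otimes Nn}\otimes L_0^{\otimes n})|_{X^{ss}(0)}$; the tautological $\mtc{O}(1)$ is then relatively ample on the $+$ side and its transform is the inverse of a relatively ample sheaf on the $-$ side, which is exactly the flip condition, and Lemma~\ref{8}(ii) is what shows the grading is nontrivial precisely over the images of $X^{ss}(0)\setminus X^{ss}(\pm)$. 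Birationality falls out of the same description (together with normality of the quotients, which you correctly note is needed to transport divisors across the small map $g$), rather than being established separately via a stable locus that may be empty.
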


\section{Stability of points in $\mb{P}\mtc{E}$}

In this section, we determine the stability conditions on the projective bundle $\mb{P}\mtc{E}$ using the Hilbert-Mumford (abbv. HM) criterion as the slope $t$ varies. 

For any point $(f,[g])\in \mb{P}\mtc{E}$, we will not distinguish the pair $(f,[g])$ of equations and the corresponding pair $(X,D)$ of schemes. For any polarization $L=a\eta+b\xi$ and any 1-parameter subgroup (abbv. 1-PS) $\lambda$, we denote by $$\mu^{\frac{b}{a}}(X,D;\lambda)=\mu^{a\eta+b\xi}(X,D;\lambda)$$ the HM-invariant of the point $(X,D)$. The following result about the HM-invariant will be widely used.

\begin{prop} \textup{(ref. \cite[Proposition 2.15]{Ben14})} The HM-invariant of a point $(f,[g])=\left(X,D\right)\in\mb{P}\mtc{E}$ is given by $$\mu^{a\eta+b\xi}((X,D);\lambda)=a\mu(f;\lambda)+b\mu(g;\lambda),$$ where $g\in H^0(\mb{P}^1\times \mb{P}^2,\mtc{O}_{\mb{P}^1\times \mb{P}^2}(2,2))$ is a representative of $[g]$ with minimal $\lambda$-weight.
    
\end{prop}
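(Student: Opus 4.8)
The plan is to split the Hilbert--Mumford weight along the decomposition $\Pic(\mb{P}\mtc{E})=\mb{Z}\eta\oplus\mb{Z}\xi$ and evaluate the $\eta$- and $\xi$-contributions separately. Since both sides of the asserted identity are unchanged under replacing $\lambda$ by a conjugate, we may assume $\lambda$ lies in a maximal torus of $G$, so that it acts diagonally on the homogeneous coordinates of $\mb{P}^1$ and of $\mb{P}^2$, and hence diagonalises the monomial bases of every $H^{0}(\mtc{O}_{\mb{P}^1\times\mb{P}^2}(d_1,d_2))$. Recall that for a $G$-linearised line bundle $L$ on a $G$-variety $Y$ the quantity $\mu^{L}(y;\lambda)$ is read off from the $\lambda$-action on the fibre $L_{y_{0}}$ over the limit $y_{0}:=\lim_{t\to 0}\lambda(t)\cdot y$; since $y_{0}$ depends only on $y$ and $\lambda$, not on $L$, and weights add under $\otimes$ and scale under $(-)^{\otimes n}$, we get $\mu^{a\eta+b\xi}((X,D);\lambda)=a\,\mu^{\eta}((X,D);\lambda)+b\,\mu^{\xi}((X,D);\lambda)$ for all $a,b\in\mb{Q}_{>0}$. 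The $\eta$-term is immediate: $\eta=\pi^{*}\mtc{O}_{\mb{P}^{11}}(1)$ and $\pi$ is $G$-equivariant, so the limit of $(X,D)$ maps under $\pi$ to $\lim_{t\to0}\lambda(t)\cdot[f]$ and the fibre of $\eta$ there is that of $\mtc{O}_{\mb{P}^{11}}(1)$; hence $\mu^{\eta}((X,D);\lambda)=\mu(f;\lambda)$.

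The heart of the matter is $\mu^{\xi}$. Put $W:=H^{0}(\mtc{O}_{\mb{P}^1\times\mb{P}^2}(2,2))$ and $V_{1}:=H^{0}(\mtc{O}_{\mb{P}^1\times\mb{P}^2}(1,0))$; the defining exact sequence for $\mtc{E}$ gives $\mtc{E}_{[f]}=W/(f\cdot V_{1})$, and the points of $\mb{P}\mtc{E}$ over $[f]$ are the lines in $\mtc{E}_{[f]}$, i.e.\ the divisors $D\in|\mtc{O}_{X}(2,2)|$. Let $\mtc{O}_{\mb{P}\mtc{E}}(-1)\hookrightarrow\pi^{*}\mtc{E}$ be the tautological subbundle, with fibre over $(X,D)=([f],[g])$ the line defining $D$. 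Among the representatives of $[g]$ in $W$ — the forms $cg+fh$ with $c\in\mb{C}^{*}$, $h\in V_{1}$ — choose the one, still denoted $g$, for which the smallest $\lambda$-weight $w_{\min}(\cdot)$ occurring in it is as large as possible (equivalently, for which $\mu(\cdot;\lambda)$ is minimal); let $g_{\min}$ be its lowest-weight homogeneous component and $f_{0}$ the lowest-weight part of $f$. The claim is that the limit point $(X_{0},D_{0}):=\lim_{t\to0}\lambda(t)\cdot(X,D)$ satisfies $\mtc{O}_{\mb{P}\mtc{E}}(-1)_{(X_{0},D_{0})}=\mathrm{image\ of\ }\mb{C}\,g_{\min}\mathrm{\ in\ }\mtc{E}_{[f_{0}]}$. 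Granting this, $\lambda$ acts on $g_{\min}\in W$, hence on its image in $\mtc{E}_{[f_{0}]}$ (the quotient $W\otimes\mtc{O}_{\mb{P}^{11}}\twoheadrightarrow\mtc{E}$ being $G$-equivariant and $[f_{0}]$ being $\lambda$-fixed), with weight $w_{\min}(g)$, so $\mu^{\xi}((X,D);\lambda)=-w_{\min}(g)=\mu(g;\lambda)$ and the proposition follows.

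Two points must be checked, and they are where the genuine work lies. First, one has to justify computing the limit in the total space $\mb{P}\mtc{E}$ even though the base point $\lambda(t)\cdot[f]$ is itself moving: writing $\lambda(t)\cdot g=\sum_{i}t^{r_{i}}g_{i}$ with $r_{\min}=w_{\min}(g)$ and rescaling by $t^{-r_{\min}}$, one has $t^{-r_{\min}}\lambda(t)\cdot g\to g_{\min}$ in $W$ and $\lambda(t)\cdot[f]\to[f_{0}]$, so the associated points of $\mb{P}\mtc{E}$ converge to $([f_{0}],[\,\overline{g_{\min}}\,])$ — \emph{provided} $\overline{g_{\min}}\neq0$ in $\mtc{E}_{[f_{0}]}$. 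Second, and this is the key lemma, one must show that the chosen representative $g$ indeed has $\overline{g_{\min}}\neq0$ in $\mtc{E}_{[f_{0}]}=W/(f_{0}\cdot V_{1})$. If not, then $g_{\min}=f_{0}h_{0}$ for some $h_{0}\in V_{1}$ homogeneous of $\lambda$-weight $w_{\min}(g)-w_{\min}(f)$; but then $g-fh_{0}$ is again a representative of $[g]$, is nonzero (else $[g]=0$ in $\mtc{E}_{[f]}$, impossible for a point of $\mb{P}\mtc{E}$), and has all $\lambda$-weights $>w_{\min}(g)$, since both $g-g_{\min}$ and $(f-f_{0})h_{0}$ do — contradicting the maximality in the choice of $g$. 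I expect this absorption argument — that the leading term of the optimal representative is never swallowed by the degenerate ideal $(f_{0})$ — to be the only non-formal ingredient; the rest is bookkeeping of weights through $\pi$ and the tautological sequence.
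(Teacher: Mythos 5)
Your argument is correct: the paper itself gives no proof of this proposition, quoting it from \cite[Proposition 2.15]{Ben14}, and your proof is essentially the argument of that reference --- split the linearization into its $\eta$- and $\xi$-parts, get the $\eta$-part from equivariance of $\pi$, and get the $\xi$-part from the $\lambda$-weight on the tautological line at the limit point, the key lemma being your ``absorption'' step that the lowest-weight term of the weight-optimal representative of $[g]$ cannot lie in $f_0\cdot V_1$, which is exactly what identifies the limit in $\mb{P}\mtc{E}$ as $([f_0],[\,\overline{g_{\min}}\,])$. The only points left implicit --- that the optimum over representatives $cg+fh$ is attained (only finitely many weights occur on $W$), that $h_0$ is automatically $\lambda$-homogeneous since $g_{\min}$ and $f_0$ are, and that your conventions (minimal $\mu$ equals maximal lowest weight, $\mtc{O}_{\mb{P}\mtc{E}}(-1)\subset\pi^{*}\mtc{E}$ the tautological subbundle) agree with the paper's usage of $\mu(f;\lambda)$ --- are routine.
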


\subsection{Stability thresholds for singular surfaces}

The main goal in this part is to show that for each type of singular surface $X$ of  class $\mtc{O}_{\mb{P}^1\times\mb{P}^2}(1,2)$, the pair $(X,D)$ is not t-semistable for any $D\in|-2K_X|$ when $t$ is small. We will see later that the bounds given in this section are indeed the stability thresholds.

\begin{lemma}\label{11}
    Let $X\subseteq \mb{P}^1_{u,v}\times \mb{P}^2_{x,y,z}$ be an irreducible surface of  class $\mtc{O}_{\mb{P}^1\times \mb{P}^2}(1,2)$, then $X$ satisfies one of the followings:\textup{
    \begin{enumerate}[(i)]
        \item $X$ is smooth;
        \item $X$ has a unique $A_1$-singularity;
        \item $X$ has exactly two $A_1$-singularities;
        \item $X$ has a unique $A_2$-singularity.
        \item $X$ has a unique $A_3$-singularity.
        \item $X$ is singular along a line, which is a fiber of $\mb{P}^1\times\mb{P}^2\rightarrow\mb{P}^2$.
    \end{enumerate}}
\end{lemma}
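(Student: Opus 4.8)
The plan is to analyze surfaces $X \subseteq \mathbb{P}^1_{u,v} \times \mathbb{P}^2_{x,y,z}$ of class $\mathcal{O}(1,2)$ via their defining equation. Such a surface is cut out by a bihomogeneous form $f = u \cdot q_1(x,y,z) + v \cdot q_2(x,y,z)$, where $q_1, q_2$ are quadratic forms in $x,y,z$. The key geometric observation is that the projection $p: X \to \mathbb{P}^2$ to the second factor is, away from the base locus $Z = \{q_1 = q_2 = 0\} \subseteq \mathbb{P}^2$, an isomorphism: for $[x:y:z] \notin Z$, the fiber is the single point $[-q_2(x,y,z) : q_1(x,y,z)] \in \mathbb{P}^1$. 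Over a point of $Z$, the whole fiber $\mathbb{P}^1$ lies in $X$. So $X$ is the blow-up of $\mathbb{P}^2$ along the conic pencil base locus $Z$ (when $X$ is a smooth quintic del Pezzo, $Z$ consists of $4$ distinct points in general position). The singularities of $X$ are therefore governed entirely by the scheme structure of $Z$, i.e. by the pencil of conics $\langle q_1, q_2 \rangle$.

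First I would reduce to the case where $Z$ is zero-dimensional versus one-dimensional. If $q_1, q_2$ share a common linear factor $\ell$, then $Z$ contains the line $\{\ell = 0\}$, and one checks directly (writing $q_i = \ell m_i$ and examining partials) that $X$ is singular along the corresponding fiber of $\mathbb{P}^1 \times \mathbb{P}^2 \to \mathbb{P}^2$ — this is case (vi). Here one must also verify irreducibility forces the pencil to have no further common components and that no worse behavior occurs. If $q_1, q_2$ have no common factor, then $Z$ is a length-$4$ subscheme of $\mathbb{P}^2$ (complete intersection of two conics), and the singularities of the blow-up $X = \Bl_Z \mathbb{P}^2$ are in bijection with the non-reduced/collinear structure of $Z$. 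The standard classification of pencils of conics (equivalently, of length-$4$ complete intersections of two conics in $\mathbb{P}^2$, via the Segre symbol / the degenerate members of the pencil) gives the finite list: (a) $4$ distinct points, no $3$ collinear $\Rightarrow$ $X$ smooth, case (i); (b) two points infinitely near (a length-$2$ non-reduced point) $\Rightarrow$ one $A_1$; (c) two separate non-reduced points $\Rightarrow$ two $A_1$'s; (d) a curvilinear length-$3$ structure (three collinear or a longer infinitely-near chain) $\Rightarrow$ an $A_2$; (e) a length-$4$ curvilinear structure $\Rightarrow$ an $A_3$. One must check that $3$ collinear points among the $4$, or the base scheme being contained in a line, still yields only rational double points of the stated type (and that configurations like "all $4$ collinear" would force a common component, hence fall under case (vi) or reducibility), and that no $D_n$ or worse singularity can appear because the exceptional locus over a planar length-$\leq 4$ subscheme of a smooth surface is a chain of $(-2)$-curves.

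The main obstacle, I expect, is the careful case-by-case verification that the local analytic type of the singularity of $\Bl_Z \mathbb{P}^2$ at a non-reduced point of $Z$ is exactly $A_{k-1}$ when the local length is $k$, and ruling out all other ADE types — in particular confirming that collinearity of subsets of $Z$ does not introduce extra singularities beyond what the local scheme structure predicts, and that irreducibility of $X$ (rather than merely connectedness) pins down precisely cases (i)–(vi) with no overlaps or omissions. A clean way to organize this is to pass to the anticanonical model: $X$ is a (possibly singular) quintic del Pezzo surface, and the classification of singular del Pezzo surfaces of degree $5$ is classical — the only ones with a $\mathbb{Q}$-Gorenstein smoothing to the smooth quintic are those with singularities $A_1$, $2A_1$, $A_2$, $A_3$, plus the non-normal degenerations, matching (ii)–(vi). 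I would cite this classification (e.g. via the theory of del Pezzo surfaces with du Val singularities) to shortcut the local computations, and only spell out by hand the identification of the non-normal case (vi) with the "singular along a $\mathbb{P}^2$-fiber" description coming from the $\mathbb{P}^1 \times \mathbb{P}^2$ embedding.
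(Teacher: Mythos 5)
Your overall route (read $X=\{uq_1+vq_2=0\}$ as the graph of the pencil of conics, i.e.\ the blow-up of $\mathbb{P}^2$ along the base scheme $Z=\{q_1=q_2=0\}$, and classify the possible local structures of $Z$) is genuinely different from the paper's proof, which is a direct case analysis on the coefficients of $f_2,g_2$ and the partial derivatives at a singular point; your route is viable, but as written it has a real gap in exactly the place where case (vi) comes from. If $q_1,q_2$ share a common linear factor $\ell$, then $uq_1+vq_2=\ell\,(u\ell_1+v\ell_2)$ and $X$ is \emph{reducible} (it contains the $(0,1)$-divisor $\{\ell=0\}$), so this situation is excluded by hypothesis and cannot be the source of (vi). Conversely, for an irreducible $X$ the pencil has no fixed component, $Z$ is always a length-$4$ complete intersection, and case (vi) occurs precisely when $Z$ is \emph{non-curvilinear}: both conics (hence every member of the pencil) singular at a common point $p$, i.e.\ locally $q_1,q_2\in\mathfrak{m}_p^2$ (e.g.\ $uy^2+vz^2=0$, or $u\,yz+v(y^2-z^2)=0$), in which case all partials vanish along the fiber $\mathbb{P}^1\times\{p\}$. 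Your list (a)--(e) only covers curvilinear local structures, so it never produces (vi) from an irreducible surface and instead misattributes it to the reducible locus; you need to add the dichotomy ``at each point of $Z$ at least one conic of the pencil is smooth (curvilinear, giving $A_{m_p-1}$ with $\sum m_p=4$, hence exactly (i)--(v))'' versus ``both singular at a common point (then the local multiplicity is $4$ and $X$ is singular along the fiber, giving (vi))''. Your side remarks about collinearity are also moot: three collinear points of $Z$ would force the line to be a common component, i.e.\ reducibility, so they never arise.

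A second caution: the proposed shortcut via the classification of degree-$5$ del Pezzo surfaces with du Val singularities does not give the lemma's list. Quintic du Val del Pezzos also admit $A_1+A_2$ and $A_4$ singularities, and all du Val del Pezzos are $\mathbb{Q}$-Gorenstein smoothable, so ``having a smoothing to the smooth quintic'' does not cut these out. What excludes them here is precisely the constraint you already have in hand: $Z$ is a complete intersection of two conics, so the curvilinear local lengths partition $4$, leaving no room for $A_4$ (length $5$) or $A_1+A_2$ (length $2+3=5$). So the pencil/base-scheme analysis must be carried out in full (as above); it cannot be replaced by citing the del Pezzo classification. With the non-curvilinear case added and the common-factor case correctly assigned to reducibility, your argument does recover the lemma.
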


\begin{proof}
    We may assume that $$X=\{uf_2(x,y,z)+vg_2(x,y,z)=0\}$$ has a singularity at $p=((0:1),(1:0:0))$. Then $f_2$ has no $x^2$ term, and $g_2$ has no $x^2,xy,xz$. After a change of coordinates via a linear action on $(y,z)$, we may also assume that $f_2$ has no $xy$ term. Notice that the coefficient of $xz$ and $y^2$ in $f_2$ is non-zero: otherwise $X$ will either be singular along the line $((u:v),(1:0:0))$ or have an irreducible component $\{z=0\}$, a contradiction to the irreducibility of $X$. Thus if $g_2$ has a non-zero term $y^2$, then $X$ has an $A_1$-singularity at $p$. In this case, we may assume $X$ has another singularity $q=((u_q:v_q),(x_q:y_q:z_q))$. Taking partial derivative for $x$, we see that either $u_q=0$ or $z_q=0$. 
    \begin{enumerate}[(a)]
        \item If $u_q=0$, then $(y_q:z_q)$ has to be a singular point of $g_2$. If $g_2$ is reduced, then $z_q=y_q=0$, and hence $q=p$, which is a contradiction. Thus $g_2=(ay+bz)^2$, where $a\neq 0$. In this case, $X$ has two $A_1$ singularities at $p$ and $q$. 
        \item If $z_q=0$, then taking the $v$ derivative, one gets that either $y_q=0$ because $g_2$ has a non-zero $y^2$ term. In the former case, taking the $x$ derivative, one obtains that $u_q=0$ and hence $q=p$. 
    \end{enumerate}

    Now we assume that $g_2$ has no $y^2$ term, and thus $g_2=z(ay+bz)$. There are also two cases:
    \begin{enumerate}[(a)]
        \item $a=0$, and hence $\{g_2=0\}$ is a double line tangent to the smooth conic $\{f_2=0\}$ at the point $p$. In this case, $p$ is the only singularity of $X$, which is of $A_3$-type.
        \item $a\neq0$, and thus $\{g_2=0\}=L_1\cup L_2$ is a union of two lines in $\mb{P}^2$, where $L_1$ is tangent to the smooth conic $\{f_2=0\}$ at the point $p$, and $L_2$ passes through $p$ and intersects the conic properly. In this case, $p$ is the only singularity of $X$, which is of $A_2$-type.
    \end{enumerate}
\end{proof}

For any rational number $0<t<1/2$, we say that $(X,D)$ is \emph{t-(semi)stable} if $(X,D)$ is GIT-(semi)stable with respect to the polarization $L_t:=\eta+t\xi$. By the HM criterion, this is equivalent to saying that $\mu^t(X,D;\lambda)\geq0$ for any 1-PS $\lambda$ of $(X,D)$. We say that a 1-PS $\lambda$ induced by the $\mb{G}_m$ action of weight $(t_0,t_1;s_0,s_1,s_2)$, or for short a 1-PS $\lambda$ of weight $(t_0,t_1;s_0,s_1,s_2)$, if $$\lambda\cdot((u:v),(x:y:z))=((\lambda^{t_0}u:\lambda^{t_1}v),(\lambda^{s_0}x:\lambda^{s_1}y:\lambda^{s_2}z)).$$

We first show that if $X$ has non-isolated singularities, then $(X,D)$ is never t-semistable for $0<t<1/2$ and $D\in |-2K_X|$.

\begin{prop}\label{6}
   If a point $([f],[g])\in\mb{P}\mtc{E}$ is t-semistable for some $0<t<\frac{1}{2}$, then $X=\mb{V}(f)$ is irreducible. 
\end{prop}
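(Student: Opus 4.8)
The plan is to show that if $X = \mathbb{V}(f)$ is reducible, then we can exhibit a one-parameter subgroup $\lambda$ of $G = \SL(2)\times\SL(3)$ destabilizing the point $([f],[g])$ for all $0 < t < 1/2$ simultaneously, i.e. with $\mu^t((X,D);\lambda) < 0$. By the Hilbert--Mumford criterion and Proposition 3.1 (the splitting $\mu^{a\eta+b\xi} = a\mu(f;\lambda) + b\mu(g;\lambda)$), it suffices to find $\lambda$ with $\mu(f;\lambda) < 0$ and $\mu(g;\lambda) \le 0$, where for $g$ we use the representative of minimal $\lambda$-weight. Since $a = 1 > 0$ and $b = t > 0$, such a $\lambda$ gives $\mu^t < 0$ for every $t \in (0,1/2)$.

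First I would enumerate the possible reducible $X$ of class $\mathcal{O}_{\mathbb{P}^1\times\mathbb{P}^2}(1,2)$. Writing $f = u f_2(x,y,z) + v g_2(x,y,z)$ with $f_2, g_2$ quadratic forms on $\mathbb{P}^2$: reducibility forces either a common factor of $f_2$ and $g_2$ (so a linear form $\ell(x,y,z)$ divides $f$, splitting off a component $\{\ell = 0\}$), or the pencil $\langle f_2, g_2\rangle$ degenerates so that $f$ factors as (linear in $u,v$ and $x,y,z$) times (quadratic). After a coordinate change, in the first case I may assume $x \mid f$, so $f = x(u\alpha_1(x,y,z) + v\beta_1(x,y,z))$; in the other degenerate cases $f$ contains a hyperplane section pulled back from $\mathbb{P}^1$, e.g. $u \mid f$, giving $f = u\cdot q_2(x,y,z)$. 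These are the genuine strata to handle.

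Next, for each stratum I would write down an explicit diagonal 1-PS. For $f = x(\ldots)$, a 1-PS of weight $(0,0; s_0, s_1, s_2)$ with $s_0$ large relative to $s_1, s_2$ and $\sum s_i = 0$ scales $x$ up, so every monomial of $f$ (each divisible by $x$) has positive weight, giving $\mu(f;\lambda) < 0$ in Mumford's sign convention; one must check that the weight on $H^0(\mathcal{O}(2,2))$ has a monomial of nonpositive weight so that $\mu(g;\lambda) \le 0$ — this is automatic because the weights on degree-$(2,2)$ monomials range over both signs once we also allow a suitable choice on the $\mathbb{P}^1$-factor, and we may adjust the $(t_0,t_1)$ part of $\lambda$ freely since $f$ has no pure $u$- or $v$-monomial to worry about. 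For $f = u\cdot q_2$, a 1-PS concentrated on the $\mathbb{P}^1$-factor, weight $(t_0, t_1; 0,0,0)$ with $t_0 > 0 > t_1$, makes every monomial of $f$ (all divisible by $u$) have weight $t_0 > 0$, so $\mu(f;\lambda) < 0$, while on $g$ the minimal-weight representative can be taken with a monomial divisible by $v^2 y^2$ or similar of weight $2t_1 < 0$, so $\mu(g;\lambda) < 0$ as well (or $= 0$ after rescaling). In all cases I conclude $\mu^t < 0$ for all $t \in (0,1/2)$, contradicting $t$-semistability.

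The main obstacle I anticipate is bookkeeping: correctly normalizing the 1-PS to lie in $\SL_2 \times \SL_3$ (trace-zero conditions), and verifying in each reducible stratum that the minimal-weight representative $g$ of $[g]$ really does have nonpositive weight under the chosen $\lambda$ — a priori $g$ could be forced to have all monomials of strictly positive weight, which would spoil the argument. I would handle this by choosing $\lambda$ so that its action on $\mathbb{P}^2$ (or $\mathbb{P}^1$) fixes a point or line lying on $X$ but not constrained by $g$, ensuring $g$ has a low-weight monomial; concretely, since $g$ is an arbitrary nonzero section of $\mathcal{O}(2,2)$ and the weight-extremes on monomials are far apart, one can always absorb the sign. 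A secondary subtlety is the case where $X$ is reducible but has no component that is a pullback hyperplane — e.g. $X = Q \cup Q'$ for two surfaces meeting in a curve — but a dimension count on sections of $\mathcal{O}(1,2)$ shows every such reducible $X$ does contain a pullback divisor from $\mathbb{P}^1$ or a component $\{\ell = 0\}$ with $\ell$ linear on $\mathbb{P}^2$, so the stratification above is exhaustive.
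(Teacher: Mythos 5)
Your stratification of the reducible surfaces is the same as the paper's (any reducible member of $|\mathcal{O}_{\mathbb{P}^1\times\mathbb{P}^2}(1,2)|$ has a component of class $(0,1)$, i.e. $\{\ell(x,y,z)=0\}$, or of class $(1,0)$, i.e. $\{au+bv=0\}$), and destabilizing with a diagonal $1$-PS concentrated on the factored-off coordinate is the right move. The genuine gap is in how you treat the curve term. You require a $\lambda$ with $\mu(f;\lambda)<0$ \emph{and} $\mu(g;\lambda)\le 0$, and you justify the second inequality by noting that the weights on all of $H^0(\mathcal{O}_{\mathbb{P}^1\times\mathbb{P}^2}(2,2))$ take both signs. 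That says nothing about the monomials actually appearing in the given $g$: modifying $g$ by multiples of $f$ only adds monomials divisible by the split-off linear factor and can never create the favorably-weighted monomial you need, and for many $D$ every monomial present has the unfavorable sign. Concretely, for $f=u(xz-y^2)$ and $g=v^2(xz-y^2)$, the constraints "all monomials of $f$ strictly destabilizing" and "some representative of $g$ of non-destabilizing weight" (namely $t_0+s_0+s_2<0$, $t_0+2s_1<0$, $2t_1+s_0+s_2\le 0$, $2t_1+2s_1\le 0$ with $t_0+t_1=s_0+s_1+s_2=0$, in either sign convention) are inconsistent, so no diagonal $\lambda$ in these coordinates does what you ask. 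A further warning sign is that your sufficient condition is independent of $t$: if it were always achievable it would prove instability without ever invoking $t<\tfrac12$, whereas the bound $\tfrac12$ is sharp and must enter the argument.

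The repair is to abandon $\mu(g;\lambda)\le 0$ and instead use a bound on $\mu(g;\lambda)$ that is uniform in $D$, which is exactly what the paper does: for the weights $(0,0;-2,1,1)$ (case $x\mid f$) every monomial of $f$ has weight at most $-1$ while \emph{every} $(2,2)$-monomial has weight at most $2$, and for $(-1,1;0,0,0)$ (case $u\mid f$) one gets $\mu(f;\lambda)=-1$ and again $\mu(g;\lambda)\le 2$ for all $g$; hence $\mu^t(X,D;\lambda)\le -1+2t<0$ precisely because $t<\tfrac12$. Note also that your "take $s_0$ large" normalization does not control the ratio between the $f$-weight and the worst-case $g$-weight, and it is exactly this ratio (equal to $2$) that produces the threshold $\tfrac12$; the $1$-PS must be chosen with that ratio in mind, not merely so as to destabilize $f$.
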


\begin{proof}
    Notice that if $X$ is reducible, then it has a component of the class either $\mtc{O}_{\mb{P}^1\times \mb{P}^2}(0,1)$ or $\mtc{O}_{\mb{P}^1\times \mb{P}^2}(1,0)$. In the former case, we may assume this component is defined by $\{x=0\}$. Consider the action $\lambda$ of weight $(0,0;-2,1,1)$. Then $\mu(f;\lambda)\leq -1$ and $\mu(g;\lambda)\leq 2$ so that $\mu^t(X,D;\lambda)<0$ for any $t<\frac{1}{2}$. In the latter case, assuming this component is defined by $\{u=0\}$, the action $\lambda$ of weight $(-1,1;0,0,0)$ satisfies $\mu(f;\lambda)=-1$ and $\mu(g;\lambda)\leq 2$ so that $\mu^t(X,D;\lambda)<0$ for any $t<\frac{1}{2}$.
\end{proof}

\begin{prop}\label{7}
    If $X\subseteq \mb{P}^1\times \mb{P}^2$ is singular along a line, which is a fiber over $\mb{P}^2$, then $X$ is t-unstable for any $0<t<1/2$.
\end{prop}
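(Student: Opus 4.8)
The plan is to exhibit, for such a surface $X$, a single one-parameter subgroup $\lambda$ that destabilizes the pair $(X,D)$ for \emph{every} $D\in|-2K_X|$ and every $t\in(0,1/2)$. By the Hilbert–Mumford criterion and the decomposition $\mu^t(X,D;\lambda)=\mu(f;\lambda)+t\,\mu(g;\lambda)$, it suffices to find $\lambda$ with $\mu(f;\lambda)<0$ and $\mu(g;\lambda)$ bounded above by a constant small enough that $\mu(f;\lambda)+t\,\mu(g;\lambda)<0$ on all of $(0,1/2)$. Concretely, by Lemma \ref{11}(vi) the singular line is a fiber of $\mb{P}^1\times\mb{P}^2\to\mb{P}^2$, so after coordinates we may assume it is $\{((u:v),(1:0:0))\}$ and $X=\{uf_2(x,y,z)+vg_2(x,y,z)=0\}$ where, being singular along the whole line, both $f_2$ and $g_2$ vanish to order $\geq 2$ along $x=0$ in a suitable sense — more precisely $f_2,g_2\in (y,z)^2$ (no $x^2,xy,xz$ monomials), since the partials in $x,y,z$ must all vanish at $(1:0:0)$ for every $(u:v)$.

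The key computation is then to choose $\lambda$ of weight $(0,0;-2,1,1)$ acting only on the $\mb{P}^2$-factor. First I would check that this $\lambda$ is in fact a subgroup stabilizing $X$ (it acts on $\mb{P}^1\times\mb{P}^2$ and hence on the ambient $\mb{P}\mtc{E}$; stabilizing $X$ is not needed, only that we compute the HM invariant of the point $(X,D)$). Every monomial of $f_2$ and $g_2$ lies in $(y,z)^2$, so each has $\lambda$-weight $\geq 2$ on the $\mb{P}^2$ part; multiplying by $u$ or $v$ (weight $0$) does not change this, so the minimal weight monomial of $f$ has weight $\geq 2$, giving, after normalizing by the center-of-mass shift, $\mu(f;\lambda)<0$ — here I must be careful with the sign convention: with weight vector $(-2,1,1)$ on $(x,y,z)$, the "heavy" direction is $x$, $f$ avoids $x$ entirely, and one gets $\mu(f;\lambda)\le -1$ after recentering. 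Simultaneously any $g\in H^0(\mb{P}^1\times\mb{P}^2,\mtc{O}(2,2))$ has $\lambda$-weight at most $2\cdot 1+2\cdot 1 = 4$ from the two $\mb{P}^2$-degrees... actually the sharp bound, after the standard normalization, is $\mu(g;\lambda)\le 2$, because the worst case is $g$ supported on $y,z$-monomials of bidegree $(2,2)$. Then $\mu^t(X,D;\lambda)\le -1+2t<0$ for all $t<1/2$, as desired.

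The step I expect to be the main obstacle is the bookkeeping for $\mu(g;\lambda)$: one needs the \emph{minimal}-weight representative of $[g]$, and a priori $g$ could contain the monomial $u^2x^2$ (or $uvx^2$, $v^2x^2$) which has the most negative weight $-4$, making $\mu(g;\lambda)$ very negative and \emph{helping} stability — so I need to rule this out or, better, observe that the relevant inequality goes the right way regardless. In fact $\mu(g;\lambda)$ is computed as $\min$ over representatives of the $\max$ weight appearing, and the normalization subtracts the average weight; the correct statement is that $\mu(g;\lambda)\le 2$ holds for \emph{all} $g$ of bidegree $(2,2)$ with no further hypothesis, since the average weight of all $(2,2)$-monomials under $(-2,1,1)$ on $(x,y,z)$ is $0$ and the maximum over the $y,z$-only monomials is exactly $2$ — and the presence of negative-weight monomials like $x^2$-terms only decreases the relevant quantity. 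I would verify this by writing out the finite weight table for the monomial basis of $H^0(\mb{P}^1\times\mb{P}^2,\mtc{O}(2,2))$ once. With that inequality in hand the proof concludes immediately, and the same $\lambda$ handles the boundary-case limits, so no separate argument for $t\to 0$ or $t\to 1/2$ is needed.
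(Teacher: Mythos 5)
Your overall strategy is exactly the one the paper uses: put $X$ in the normal form $uf_2(y,z)+vg_2(y,z)=0$ (your derivation that $f_2,g_2\in(y,z)^2$ is correct) and destabilize with a diagonal one-parameter subgroup of $\SL(3)$ with weights proportional to $(2,-1,-1)$ on $(x,y,z)$. However, your bookkeeping for the Hilbert--Mumford invariant is not correct, and the specific numbers you assert do not hold under any single consistent convention. In the convention the paper actually uses (see the computation in Lemma \ref{5}, case (1), where $\mu(X;\lambda)=\max\{a,b,-a-b\}$ is the \emph{maximal} weight of a monomial appearing, and semistability means $\mu^t\geq 0$ for every $\lambda$, with no ``recentering by the average weight''), your choice $\lambda$ of weight $(0,0;-2,1,1)$ gives $\mu(f;\lambda)=+2$, since every monomial of $f$ lies in $(y,z)^2$ and has weight $+2$; this $\lambda$ therefore imposes no condition at all ($\mu^t\geq 2-4t>0$ on $(0,1/2)$), and your claim ``$\mu(f;\lambda)\le -1$ after recentering'' is not justified. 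You must take the opposite direction $(0,0;2,-1,-1)$, which is the paper's choice, and then $\mu(f;\lambda)=-2$.

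The second error is the bound $\mu(g;\lambda)\le 2$ ``for all $g$ of bidegree $(2,2)$.'' Under the destabilizing direction $(0,0;2,-1,-1)$ the extreme case is not a $y,z$-monomial but an $x^2$-monomial: for example $g=u^2x^2$ has weight $+4$ (this is precisely the divisor $D_0$ of Lemma \ref{5}, so it is not an exotic case), and passing to the minimal-weight representative of $[g]$ modulo $f\cdot H^0(\mtc{O}(1,0))$ can only lower this, so the correct sharp bound is $\mu(g;\lambda)\le 4$, not $2$; your argument that ``the normalization subtracts the average weight'' and that negative-weight monomials only help is not how $\mu$ is computed here. With the corrected constants the proof goes through exactly as in the paper: $\mu^t(X,D;\lambda)\le -2+4t<0$ for all $0<t<1/2$. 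So the idea is right and the conclusion is salvageable, but as written your inequality $-1+2t<0$ rests on two incorrect intermediate values, and with the $\lambda$ you actually chose the pair is not destabilized at all.
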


\begin{proof}
    The defining equation of $X$ is of the form $uf_2(y,z)+vf_2(y,z)=0$. Consider the 1-PS $\lambda$ induced by the $\mb{G}_m$-action of weight $(0,0;2,-1,-1)$. Then $\mu(X;\lambda)=-2$, and hence $$\mu^t(X,D;\lambda)\leq -2+4t<0$$ for any $0<t<1/2$.
\end{proof}

\begin{corollary}\label{15}
For any $0<t<\frac{1}{2}$, the locus of t-semistable points $(X,D)\in \mb{P}\mtc{E}$ is contained in the open subset $U$ of $\mb{P}\mtc{E}$ parameterizing complete intersections. Moreover, for any t-semistable point $(X,D)$, the surface has only isolated singularities of type $A_1$, $A_2$ or $A_3$. 
\end{corollary}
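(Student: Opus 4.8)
The plan is to assemble the three preceding results. Let $(X,D)=([f],[g])\in\mb{P}\mtc{E}$ be $t$-semistable for some $0<t<\frac12$. By Proposition \ref{6}, the surface $X=\mb{V}(f)$ is irreducible. I would next observe that an irreducible $\mb{V}(f)$ with $f$ of class $\mtc{O}_{\mb{P}^1\times\mb{P}^2}(1,2)$ is automatically reduced: since $f$ is homogeneous of degree one in $(u:v)$ it cannot be a proper power, so if it were non-reduced it would be divisible by the square of a form in $(x:y:z)$, and then $\mb{V}(f)$ would be reducible as a topological space, contradicting Proposition \ref{6}. Hence $X$ is an integral surface of class $\mtc{O}_{\mb{P}^1\times\mb{P}^2}(1,2)$, and Lemma \ref{11} applies: $X$ is either smooth, or has isolated singularities of type $A_1$, $A_2$ or $A_3$, or is singular along a fiber of $\mb{P}^1\times\mb{P}^2\to\mb{P}^2$.

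The last case is excluded by Proposition \ref{7}, which shows such an $X$ is $t$-unstable for every $0<t<\frac12$. Therefore $X$ falls into one of cases (i)--(v) of Lemma \ref{11}, which proves the second assertion. In each of these cases $X$ is normal, so by adjunction $-K_X\sim\mtc{O}_X(1,1)$ and $\mtc{O}_X(2,2)$ is a line bundle on $X$; since $[g]$ is by definition a nonzero element of the fiber $\mtc{E}_{[f]}=H^0(X,\mtc{O}_X(2,2))$, the divisor $D=\{g|_X=0\}\in|-2K_X|$ is an effective Cartier divisor and $(X,D)$ is a complete intersection in $\mb{P}^1\times\mb{P}^2$ of type $(\mtc{O}(1,2),\mtc{O}(2,2))$ with $X$ irreducible. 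Thus $(X,D)\in U$, which gives the first assertion.

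I expect essentially no obstacle here, since this is a formal consequence of Propositions \ref{6} and \ref{7} together with Lemma \ref{11}. The only points requiring a word of care are the passage from "irreducible" (as used in Proposition \ref{6}) to "integral", so that Lemma \ref{11} can legitimately be invoked, and the verification that a $t$-semistable point genuinely lies in the complete-intersection locus $U$ rather than merely having an irreducible first component; both follow at once from the bidegree constraint on $f$ and the construction of $\mb{P}\mtc{E}$ as the projectivization of $\mtc{E}$.
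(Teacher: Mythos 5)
Your argument is correct and is exactly the intended one: the paper treats this as an immediate consequence of Proposition \ref{6} (irreducibility), Lemma \ref{11} (classification of singularities of irreducible surfaces of class $\mtc{O}_{\mb{P}^1\times\mb{P}^2}(1,2)$), and Proposition \ref{7} (excluding the non-isolated case), and gives no separate proof. Your extra observations — that irreducible forces reduced for this bidegree, and that a nonzero $[g]$ in the fiber of $\mtc{E}$ automatically cuts out an effective divisor $D\in|-2K_X|$ so that $(X,D)\in U$ — are correct and only make explicit what the paper leaves implicit.
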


\begin{prop}\label{2}
    Suppose $X$ is an irreducible singular surface with an $A_1$-singularity. Then $(X,D)$ is t-unstable for any $D\in H^0(\mb{P}^1\times \mb{P}^2,\mtc{O}_{X}(2,2))$ and $0<t<\frac{1}{14}$.
\end{prop}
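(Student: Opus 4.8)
The plan is to exhibit, for an arbitrary $A_1$-surface $X$ and arbitrary $D\in|-2K_X|$, an explicit destabilizing $1$-PS $\lambda$ and to estimate its HM-invariant. By Proposition (the HM-invariant decomposition) we have $\mu^t(X,D;\lambda)=\mu(f;\lambda)+t\,\mu(g;\lambda)$, so it suffices to produce a $\lambda$ with $\mu(f;\lambda)$ strictly negative and to bound $\mu(g;\lambda)$ from above by a universal constant; the inequality $\mu^t<0$ then holds for all $t$ below the ratio $-\mu(f;\lambda)/\mu(g;\lambda)$, and we want this ratio to be $\tfrac{1}{14}$ in the worst case.

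First I would normalize coordinates exactly as in the proof of Lemma \ref{11}: place the $A_1$-singularity at $p=((0:1),(1:0:0))$ and write $X=\{uf_2(x,y,z)+vg_2(x,y,z)=0\}$, where $f_2$ has no $x^2$ or $xy$ term, $g_2$ has no $x^2,xy,xz$ term, and — because $p$ is $A_1$ and not worse — the Hessian of the defining equation at $p$ is nondegenerate, which forces the $xz$ and $y^2$ coefficients of $f_2$ to be nonzero and $g_2$ to contain a nonzero $y^2$ term. Next I would choose the $1$-PS $\lambda$ of weight $(t_0,t_1;s_0,s_1,s_2)$ adapted to this flag. A natural candidate is the normalized weight with $v$ heavy and $x$ heavy, e.g. weight $(-1,0)$ on $(u:v)$ and something like $(2,-1,-1)$ or a finer choice on $(x:y:z)$, rescaled to be traceless on each factor. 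One computes $\mu(f;\lambda)$ by inspecting which monomials $u x^2,\,u\cdot(\text{terms of }f_2),\,v\cdot(\text{terms of }g_2)$ actually appear: the monomials surviving in $f$ are controlled by the vanishing pattern above, so the minimal $\lambda$-weight monomial of $f$ has strictly negative weight, giving $\mu(f;\lambda)<0$, and in fact $\mu(f;\lambda)\le -1$ after clearing denominators. Simultaneously $g\in H^0(\mtc{O}(2,2))$ is a section whose minimal-weight representative has $\lambda$-weight bounded above by the maximal weight of any degree-$(2,2)$ monomial under $\lambda$, which is a fixed number depending only on $\lambda$; one checks this bound is $\le 14\cdot(-\mu(f;\lambda))$, i.e. $\mu(g;\lambda)\le 14$ when $\mu(f;\lambda)=-1$, yielding $\mu^t(X,D;\lambda)\le -1+14t<0$ for $t<\tfrac1{14}$.

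I expect the main obstacle to be the uniformity in $D$ together with getting the constant $14$ sharp rather than merely finite. The weight bound on $g$ must hold for \emph{every} $D\in|-2K_X|$, so one cannot use any special position of $D$; one must bound $\mu(g;\lambda)$ purely by the combinatorics of which $(2,2)$-monomials can have large $\lambda$-weight, and then possibly optimize the choice of $\lambda$ (and of the residual coordinate change on $(y,z)$) to balance $-\mu(f;\lambda)$ against that monomial bound. It is plausible that a first naive $\lambda$ gives $\mu^t<0$ only for $t$ below some threshold larger than $\tfrac1{14}$, and that achieving exactly $\tfrac1{14}$ requires the ``best'' destabilizing $1$-PS; since the statement only claims instability for $0<t<\tfrac1{14}$, it is enough to produce one $\lambda$ attaining the ratio $\tfrac1{14}$, and the later sections promise these bounds are the actual stability thresholds, so the value is not accidental. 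A secondary, purely bookkeeping, point is to double-check the traceless normalization of $\lambda$ on $\SL(2)\times\SL(3)$ so that the weights $(t_0,t_1;s_0,s_1,s_2)$ used in the monomial computation are the honest ones entering the Hilbert--Mumford numerical function.
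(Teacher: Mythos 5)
Your overall strategy is exactly the paper's: produce one explicit destabilizing $1$-PS $\lambda$, show $\mu(f;\lambda)\le -1$, and bound $\mu(g;\lambda)$ by the maximal $\lambda$-weight of a $(2,2)$-monomial, giving $\mu^t(X,D;\lambda)\le -1+14t<0$ for $t<\tfrac1{14}$. The gap is that you stop precisely at the step carrying all the content, and the candidate weight you do float fails. In your normalization (singularity at $p=((0:1),(1:0:0))$, so $v=1$, $x=1$ at $p$), the monomials that can occur in $f$ are $uxz,\,uxy,\,uy^2,\,uyz,\,uz^2,\,vy^2,\,vyz,\,vz^2$, and the $A_1$ condition (nondegenerate Hessian) forces $uxz$ and $vy^2$ to appear with nonzero coefficient. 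With your suggested weight, normalized to $(-1,1;2,-1,-1)$, the forced monomial $uxz$ has weight $-1+2-1=0$, so $\mu(f;\lambda)=0$ rather than $\le-1$; since $\mu(g;\lambda)$ can be strictly positive (e.g. when $g$ contains $v^2x^2$), this $\lambda$ destabilizes nothing, for any $t>0$. The ratio of the $\mathbb{P}^1$-weights to the $\mathbb{P}^2$-weights must be tuned: for $\lambda=(-b,b;2a,-a,-a)$ one needs $b-a\ge1$ and $2a-b\ge1$ simultaneously, and the resulting threshold $\min(b-a,\,2a-b)/(2b+4a)$ is maximized at $b:a=3:2$, giving exactly $\tfrac1{14}$. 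Concretely, take $\lambda$ of weight $(-3,3;4,-2,-2)$: every monomial allowed by the normalization then has weight $\le-1$ (the extremal ones being $uxz$ and $vy^2$, of weight $-1$), while the maximal weight of a $(2,2)$-monomial is $2\cdot3+2\cdot4=14$, attained by $v^2x^2$; hence $\mu^t(X,D;\lambda)\le-1+14t<0$ for every $D$ and every $t<\tfrac1{14}$. This is the paper's proof, stated there with a fixed normal form and the permuted weight $(-3,3;-2,4,-2)$.

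Two smaller corrections. First, your worry runs in the wrong direction: a naive $\lambda$ yields a threshold \emph{smaller} than $\tfrac1{14}$ (possibly $0$, as above), which does not prove the statement, and no $\lambda$ can do uniformly better than $\tfrac1{14}$ since by Lemma \ref{5} and Proposition \ref{9} there are $A_1$-pairs that are semistable at $t=\tfrac1{14}$. Second, with the paper's sign convention (visible in Lemma \ref{5}, where $\mu(X;\lambda)=\max\{a,b,-a-b\}$ is the maximum of the weights of the monomials present), the requirement is that \emph{every} monomial occurring in $f$ have weight $\le-1$, not that the minimal-weight monomial be negative; this is exactly why the forced $uxz$ term is the obstruction your candidate $\lambda$ misses.
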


\begin{proof}
    We may assume $X$ is given by the equation $uyz+vx(x+y+z)=0$ with an $A_1$-singularities $p=((0:0:1),(0:1))$. Consider the action $\lambda$ of weight $(-3,3;-2,4,-2)$. Then we have $\mu(X,\lambda)=-1$, and $$\mu^t(X,D;\lambda)\leq -1+14t<0$$ for $0<t<\frac{1}{14}$.
\end{proof}

\begin{prop}
    Let $X\subseteq \mb{P}^1\times\mb{P}^2$ be a quintic del Pezzo surface with two $A_1$-singularities at $p$ and $q$, and $D\in |-2K_X|$ a boundary divisor. Then for $t<\frac{1}{8}$, the point $(X,D)$ is t-unstable.
\end{prop}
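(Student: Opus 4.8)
The plan is to imitate the proof of Proposition \ref{2}, finding an explicit one-parameter subgroup $\lambda$ that destabilizes $(X,D)$ for all small $t$, this time exploiting the \emph{two} $A_1$-singularities rather than one. First I would normalize coordinates: by Lemma \ref{11}(iii) and its proof, a surface $X$ with two $A_1$-singularities can be put in a standard form, with the two singular points placed at convenient torus-fixed points of $\mb{P}^1\times\mb{P}^2$ — say $p=((0:1),(1:0:0))$ and $q=((1:0),(0:1:0))$ or some similar symmetric choice — and, following case (a) in the proof of Lemma \ref{11}, one can take $g_2=(ay+bz)^2$ while $f_2$ carries the $xz$ and $y^2$ terms forced by irreducibility. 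I would write down the most degenerate representative equation $uf_2+vg_2=0$ compatible with both singularities (e.g.\ something like $uz(x+z)+vy^2=0$ or the analogous symmetric expression) and check it indeed has exactly the two prescribed $A_1$-points.

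Next I would look for a 1-PS $\lambda$ of weight $(t_0,t_1;s_0,s_1,s_2)$, normalized so that $2(t_0+t_1)=0$ and $3(s_0+s_1+s_2)=0$ (the $\SL_2\times\SL_3$ normalization implicit in the weight conventions of the paper), that makes $\mu(f;\lambda)$ as negative as possible. The key point is that since $D$ lies in $|-2K_X|$, i.e.\ $g\in H^0(\mtc{O}_{\mb{P}^1\times\mb{P}^2}(2,2))$, the $\lambda$-weight of the minimal-weight monomial of any such $g$ is bounded above by the maximal weight of a degree-$(2,2)$ monomial, which is a universal constant depending only on $\lambda$; call it $M(\lambda)$. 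Then by the formula $\mu^t(X,D;\lambda)=\mu(f;\lambda)+t\,\mu(g;\lambda)\le \mu(f;\lambda)+t\,M(\lambda)$, and it suffices to produce $\lambda$ with $\mu(f;\lambda)<0$ and then solve $\mu(f;\lambda)+t\,M(\lambda)<0$ for $t$; a $\lambda$ adapted to the two-singularity configuration should give the bound $t<\tfrac{1}{8}$. Concretely I expect a weight vector that simultaneously "sees" both singular points — something in the spirit of $(-3,3;\,\ast,\ast,\ast)$ combined with a destabilizing direction on $\mb{P}^2$ that flattens $g_2=(ay+bz)^2$ — will do the job; one then just reads off $\mu(f;\lambda)$ from the chosen normal form and computes $M(\lambda)$ by maximizing a linear functional over the finitely many exponent vectors of $\mtc{O}(2,2)$-monomials.

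The routine part is the bookkeeping: verifying the normal form, listing the monomials of $f_2$, $g_2$ surviving the normalization, and computing the two weights. The main obstacle I anticipate is \emph{choosing the right $\lambda$} so that the resulting threshold is exactly $\tfrac{1}{8}$ and not merely some smaller constant — the bound must be sharp because the paper asserts (Corollary \ref{15} and the surrounding discussion) that these thresholds are the actual VGIT walls, and indeed $t_2=\tfrac{1}{8}$ appears in Theorem \ref{13}. This likely forces a particular, slightly asymmetric weight vector that balances the contributions of the two singularities; getting $\mu(f;\lambda)$ and $M(\lambda)$ into the ratio $1:8$ may require testing a couple of candidate 1-PS's, and possibly first conjugating $X$ into the single "worst" normal form (analogous to $uyz+vx(x+y+z)=0$ in Proposition \ref{2}) for which the optimal $\lambda$ is diagonal. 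Once the sharp $\lambda$ is in hand, the inequality $\mu^t(X,D;\lambda)\le \mu(f;\lambda)+t\,M(\lambda)<0$ for $t<\tfrac18$ is immediate and the proof concludes exactly as in Proposition \ref{2}.
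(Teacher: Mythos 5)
Your plan is correct and is essentially the paper's argument: the paper normalizes $X$ to $uyz+vx^2=0$ (the same toric normal form your candidate reduces to after a coordinate change) and takes the single 1-PS $\lambda$ of weight $(-3,3;-2,1,1)$, for which $\mu(f;\lambda)=-1$ while the maximal $\lambda$-weight of a $(2,2)$-monomial is $8$, giving $\mu^t(X,D;\lambda)\le -1+8t<0$ for $t<\tfrac18$. This is exactly the normal-form plus explicit-destabilizer plus max-weight-bound scheme you describe, with your ratio $\mu(f;\lambda):M(\lambda)=-1:8$ realized by that weight vector.
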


\begin{proof}
    We may assume $X$ is given by the equation $uyz+vx^2=0$ with two $A_1$-singularities at $p=((0:0:1),(0:1))$ and $q=((0:1:0),(0:1))$. Consider the action $\lambda$ of weight $(-3,3;-2,1,1)$. Then we have $\mu(f,\lambda)=-1$, and $$\mu^t(X,D;\lambda)\leq -1+8t<0$$ for $0<t<\frac{1}{8}$.
\end{proof}

\begin{prop}
    Let $X\subseteq \mb{P}^1\times\mb{P}^2$ be a quintic del Pezzo surface with an $A_2$-singularity at $p$, and $D\in |-2K_X|$ a boundary divisor. Then for $t<\frac{1}{6}$, the point $(X,D)$ is t-unstable.
\end{prop}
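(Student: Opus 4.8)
The plan is to follow verbatim the strategy of the $A_1$ and two-$A_1$ cases above: reduce $X$ to a normal form under the group action, then exhibit a single $1$-PS $\lambda$ that destabilizes $(X,D)$ for every $D\in|-2K_X|$ as soon as $t<\frac{1}{6}$. Once the right $\lambda$ has been found the argument is a one-line Hilbert--Mumford computation, exactly as in the previous propositions.

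First I would put $X$ into normal form. Running the argument in the proof of Lemma \ref{11} in the $A_2$-case and then changing coordinates on $\mb{P}^1_{u,v}\times\mb{P}^2_{x,y,z}$ (rescaling the defining equation), one may assume
$$X=\big\{\,u(xz+y^2)+vyz=0\,\big\},$$
with the $A_2$-point at $p=((0:1),(1:0:0))$: here $f_2=xz+y^2$, while the zero locus of $g_2=yz$ is the line $\{z=0\}$ tangent to the conic $\{f_2=0\}$ at $p$ together with the line $\{y=0\}$ through $p$ meeting the conic transversally, exactly as in the last case of the proof of Lemma \ref{11}. Since $-2K_X=\mtc{O}_X(2,2)$, a boundary divisor $D$ is cut out by some $g\in H^0(\mb{P}^1\times\mb{P}^2,\mtc{O}_{\mb{P}^1\times\mb{P}^2}(2,2))$, well defined modulo $f\cdot H^0(\mtc{O}_{\mb{P}^1\times\mb{P}^2}(1,0))$.

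Now take the $1$-PS $\lambda$ of weight $(-1,1;2,0,-2)$, i.e. $\lambda\cdot((u:v),(x:y:z))=((\lambda^{-1}u:\lambda v),(\lambda^{2}x:y:\lambda^{-2}z))$. The three monomials $uxz$, $uy^2$ and $vyz$ of $f$ all have $\lambda$-weight $-1$; in particular $f$ is a $\lambda$-eigenvector, $\lambda$ lies in the stabilizer of $[X]\in\mb{P}^{11}$, and $\mu(X;\lambda)=-1$. On the other hand every monomial of bidegree $(2,2)$ has $\lambda$-weight at most $2\cdot 1+2\cdot 2=6$ (with equality only for $v^2x^2$), so $\mu(g;\lambda)\le 6$ for every $D$. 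Hence, by the identity $\mu^{a\eta+b\xi}((X,D);\lambda)=a\,\mu(f;\lambda)+b\,\mu(g;\lambda)$,
$$\mu^{t}\big((X,D);\lambda\big)=\mu(f;\lambda)+t\,\mu(g;\lambda)\le -1+6t<0\qquad\text{for all }0<t<\tfrac{1}{6},$$
so $(X,D)$ is $t$-unstable for every $D\in|-2K_X|$ whenever $0<t<\frac{1}{6}$.

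The only genuinely non-mechanical step is the first one — producing the normal form and spotting the adapted $1$-PS. What makes the estimate clean, and the bound $\mu(g;\lambda)\le 6$ uniform in $D$, is that in this model $f$ is a $\lambda$-eigenvector, so $\lambda\in\Stab([X])$: the surface does not move, and the destabilization concerns only the position of $D$ inside the fixed linear system $|-2K_X|$. This is also why $\frac{1}{6}$ is precisely the threshold output by this $1$-PS. (That $\frac{1}{6}$ is the actual GIT threshold for $A_2$-surfaces — i.e. that some such pair becomes $t$-semistable once $t>\frac{1}{6}$ — is not needed here and belongs to the later wall-crossing analysis.)
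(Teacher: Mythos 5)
Your proof is correct and follows essentially the same route as the paper: reduce to a normal form and destabilize with a single $1$-PS via $\mu^{t}=\mu(f;\lambda)+t\,\mu(g;\lambda)\leq -1+6t$. Your normal form and weight $(-1,1;2,0,-2)$ are just the paper's $uyz+v(xz-y^2)=0$ with weight $(1,-1;-2,0,2)$ after swapping the $\mb{P}^1$-coordinates and $x\leftrightarrow z$ (up to a sign on $y^2$), the only cosmetic difference being that in your coordinates $f$ is a $\lambda$-eigenvector, which makes the value $\mu(f;\lambda)=-1$ independent of sign conventions.
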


\begin{proof}
    We may assume $X$ is given by the equation $uyz+v(xz-y^2)=0$ with an $A_2$-singularity at $p=((0:0:1),(1:0))$. Consider the action $\lambda$ of weight $(1,-1;-2,0,2)$. Then we have $\mu(f,\lambda)=-1$, and $$\mu^t(X,D;\lambda)\leq -1+6t<0$$ for $0<t<\frac{1}{6}$.
\end{proof}

\begin{prop}
    Let $X\subseteq \mb{P}^1\times\mb{P}^2$ be a quintic del Pezzo surface with an $A_3$-singularity at $p$, and $D\in |-2K_X|$ a boundary divisor. Then for $t<\frac{1}{4}$, the point $(X,D)$ is t-unstable.
\end{prop}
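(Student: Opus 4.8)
The plan is to follow verbatim the template of the preceding four propositions: normalize $X$, write down one destabilizing one-parameter subgroup read off from the singularity, and estimate its Hilbert--Mumford invariant uniformly over all $D$.

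First, by case (v) of Lemma \ref{11} and the normalization carried out in its proof --- the sub-case $g_2=z^2$ with $\{f_2=0\}$ a smooth conic tangent to $\{z=0\}$ at $(1:0:0)$ --- we may assume that $X$ is given by
\[
u(xz+y^2)+vz^2=0\quad\text{in}\quad\mb{P}^1_{u,v}\times\mb{P}^2_{x,y,z},
\]
with its unique singularity $p=((0:1),(1:0:0))$ of type $A_3$ (in the affine chart $v=x=1$ the equation reads $\tilde z^2+\tilde u\tilde z+\tilde u\tilde y^2=0$; completing the square first in $\tilde z$ and then in $\tilde u$ rewrites it as $ab+\tilde y^4=0$). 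It therefore suffices to produce, for each $D\in|-2K_X|$, a $1$-PS $\lambda$ with $\mu^t(X,D;\lambda)<0$ for all $0<t<\tfrac14$.

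I would take $\lambda$ to be the $1$-PS of weight $(-1,1;1,0,-1)$, under which the three monomials $uxz,\,uy^2,\,vz^2$ of $f$ --- equivalently the three terms $\tilde z^2,\,\tilde u\tilde z,\,\tilde u\tilde y^2$ of the local equation at $p$ --- all have weight $-1$; this is the destabilizer dictated by the Newton diagram of the $A_3$ point. Hence $\mu(f;\lambda)=-1$. On the other hand, no degree-$(2,2)$ monomial on $\mb{P}^1\times\mb{P}^2$ has $\lambda$-weight exceeding $4$ (the value $4$ occurring only for $v^2x^2$), so \cite[Proposition 2.15]{Ben14} gives $\mu(g;\lambda)\le 4$, and therefore
\[
\mu^t(X,D;\lambda)=\mu(f;\lambda)+t\,\mu(g;\lambda)\le -1+4t<0\qquad(0<t<\tfrac14).
\]
This proves the proposition.

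There is no real obstacle here beyond bookkeeping. The two points needing care are (i) extracting the correct $A_3$ normal form from Lemma \ref{11} and confirming the singularity type, and (ii) choosing $\lambda$ so that $\mu(f;\lambda)=-1$ while the largest $\lambda$-weight of a degree-$(2,2)$ monomial is only $4$ --- it is exactly this number, as against $14$, $8$, $6$ in the $A_1$, two-$A_1$ and $A_2$ cases, that yields the threshold $\tfrac14$. Both are immediate once the normal form is fixed, and the uniformity in $D$ is automatic because the estimate $\mu(g;\lambda)\le 4$ depends only on $\lambda$.
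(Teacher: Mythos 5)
Your proof is correct and is essentially the paper's own argument: the paper takes the normal form $ux^2+v(xz-y^2)=0$ and the $1$-PS of weight $(1,-1;-1,0,1)$, which is exactly your choice after swapping $u\leftrightarrow v$ and $x\leftrightarrow z$, and concludes identically via $\mu(f;\lambda)=-1$, $\mu(g;\lambda)\le 4$, so $\mu^t\le -1+4t<0$ for $t<\tfrac14$.
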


\begin{proof}
    We may assume $X$ is given by the equation $ux^2+v(xz-y^2)=0$ with an $A_3$-singularity at $p=((0:0:1),(1:0))$. Consider the action $\lambda$ of weight $(1,-1;-1,0,1)$. Then we have $\mu(f,\lambda)=-1$, and $$\mu^t(X,D;\lambda)\leq -1+4t<0$$ for $0<t<\frac{1}{4}$.
\end{proof}

\subsection{Surfaces with exactly one singularity}

From now on, we take the boundary divisor $D$ into consideration. We will determine the value $T$ for which a pair $(X,D)$ is $T$-semistable but not t-semistable for any $t<T$. Since the surface $X$ with two $A_1$-singularities is a toric surface, which is more complicated than the others, we first deal with the surfaces with a single singularity.

\begin{lemma}\label{5}
    Let $X\subseteq \mb{P}^1\times\mb{P}^2$ be the quintic del Pezzo surface defined by the equation $$uyz+vx(y+z)=0,$$ which has an $A_1$-singularity at $p=((1:0:0),(1:0))$. Let $D\in |-2K_X|$ be the curve on $X$ which contains a line of multiplicity $4$, defined by the equation $u^2x^2=0$. Then $(X,D)$ is t-semistable if and only if $t=1/14$.   
\end{lemma}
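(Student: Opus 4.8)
The plan is to verify t-semistability of $(X,D)$ for $t = 1/14$ by a two-sided argument: first show $\mu^{1/14}(X,D;\lambda) \geq 0$ for every 1-PS $\lambda$, and then — since Proposition \ref{2} already gives t-instability of any pair supported on a surface with an $A_1$-singularity for $t < 1/14$ — conclude that $t = 1/14$ is the unique value in $(0,1/2)$ for which $(X,D)$ is t-semistable. The second half is essentially free from the earlier results (the surface $X$ here has a unique $A_1$-singularity, so Proposition \ref{2} applies verbatim), so the real content is the semistability at the wall value.

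For the semistability computation, I would use the HM criterion in the form of the Proposition preceding Section 3.1: $\mu^{t}(X,D;\lambda) = \mu(f;\lambda) + t\,\mu(g;\lambda)$ with $f = uyz+vx(y+z)$ and $g = u^2x^2$ (so $D = \{g=0\}$ is the line $\{u=0\}$... rather $\{x = 0\}$ taken with multiplicity, cut by $u^2 x^2$), and at $t = 1/14$ the inequality to check becomes $14\,\mu(f;\lambda) + \mu(g;\lambda) \geq 0$. By the standard reduction one may assume $\lambda$ is diagonal with weights $(t_0,t_1;s_0,s_1,s_2)$ normalized by $t_0 \geq t_1$, $s_0 \geq s_1 \geq s_2$ and $t_0+t_1 = 0 = s_0+s_1+s_2$; moreover, after permuting coordinates and using $\Aut$ of the particular pair (the equation $uyz+vx(y+z)$ is symmetric in $y,z$, and $u^2x^2$ is fixed), one reduces to a finite polyhedral problem. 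I would then compute $\mu(f;\lambda)$ as the maximum over the monomials $uyz$, $vxy$, $vxz$ of the corresponding weight, and $\mu(g;\lambda) = 2t_0 + 2s_0$ (the unique monomial of $g$), and check that the resulting piecewise-linear function of $(t_0,s_0,s_1,s_2)$ is nonnegative on the normalized cone. Since a piecewise-linear function on a polyhedral cone attains its sign-minimum along a bounded set of rays (the edges of the chambers cut out by the max), this comes down to checking finitely many explicit 1-PS's; one expects equality $\mu^{1/14}(X,D;\lambda)=0$ to be attained precisely along the destabilizing 1-PS of weight $(-3,3;-2,4,-2)$ (or its $y\leftrightarrow z$ conjugate) from Proposition \ref{2}, which is exactly why $1/14$ is the threshold.

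The main obstacle is the finite-but-genuine casework in bounding $\mu(f;\lambda)$: because $f$ has three monomials and $g$ only one, the "worst" 1-PS is not a priori obvious, and one must organize the chamber decomposition of the weight cone carefully — grouping by which of $uyz,\,vxy,\,vxz$ computes $\mu(f;\lambda)$ — and in each chamber minimize a linear functional over a polyhedral cone with vertex at the origin, i.e. check its values on the extremal rays. I would present this compactly by first using the symmetry $y\leftrightarrow z$ and the normalization to cut the number of chambers roughly in half, then tabulating the extremal rays and the value of $14\mu(f;\lambda)+\mu(g;\lambda)$ on each; a short argument (e.g. that the singular point $p = ((1:0:0),(1:0))$ must have nonnegative "$\lambda$-distance" data, as already exploited in Proposition \ref{2}) rules out the remaining rays. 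The upshot is the stated equivalence, with $t=1/14$ identified as the first VGIT wall.
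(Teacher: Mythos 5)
Your plan for verifying semistability at $t=1/14$ --- a direct Hilbert--Mumford casework over diagonal one-parameter subgroups, organized by which monomial of $f$ computes the maximum --- is the same strategy as the paper's. But there are two genuine problems with the rest. First, your ``only if'' direction is incomplete: Proposition \ref{2} only gives $t$-instability for $t<1/14$, and combining that with semistability at $t=1/14$ says nothing about $t>1/14$; you never exhibit a one-parameter subgroup destabilizing $(X,D)$ for $t>1/14$, so uniqueness of the semistable slope does not follow from what you wrote. The paper settles this at the outset with the stabilizing $\mathbb{G}_m$ of the pair and its inverse: for $\lambda$ of weight $(3,-3;4,-2,-2)$ every monomial of $f=uyz+vxy+vxz$ has weight $-1$ while $u^2x^2$ has weight $14$, so $\mu^t(X,D;\lambda)=-1+14t$, and the inverse weight $(-3,3;-4,2,2)$ gives $\mu^t(X,D;\lambda^{-1})=1-14t$; requiring both to be nonnegative forces $t=1/14$.

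Second, the normalization $t_0\geq t_1$, $s_0\geq s_1\geq s_2$ is not legitimate here. The pair is symmetric only under $y\leftrightarrow z$, and $u$, $v$ (likewise $x$ versus $y,z$) play asymmetric roles because $D=\{u^2x^2=0\}$ singles out $u$ and $x$; the only reduction available is to order the weights of $y$ and $z$. Restricting to your cone silently discards the subgroup $(-3,3;-4,2,2)$ above --- which is exactly the one that destabilizes for $t>1/14$ and on which $\mu^{1/14}$ attains the value $0$ --- so the semistability check at $t=1/14$ would also be incomplete as described. Compare the paper's casework, which splits the $\mathbb{P}^1$-weight into the three cases $(0,0)$, $(1,-1)$, $(-1,1)$ and only assumes $a\geq b$ for the weights of $y$ and $z$.
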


\begin{proof}
    Consider the two one-parameter subgroups given by the $\mb{G}_m$-actions of weights $(3,-3;4,-2,-2)$ and $(-3,3;-4,2,2)$. We see that if $(X,D)$ is t-semistable, then  $t=1/14$. Now we prove the converse: the HM-invariant is non-negative with respect to any 1-PS. Let $\lambda$ be a 1-PS, and we will discuss by cases.
    \begin{enumerate}[(1)]
        \item Suppose the weight of $\lambda$ is $(0,0;a+b,-a,-b)$. Then we have $$\mu(X;\lambda)=\max\{a,b,-a-b\}.$$ We may moreover assume $a\geq b$. There are three subcases.
        \begin{enumerate}[(i)]
            \item $a\geq b\geq -a-b$: we have $\mu(X;\lambda)=a$ and $\mu(D;\lambda)=2(a+b)\geq a$. It follows that $$\mu^t(X,D;\lambda)\geq \frac{15}{14}a\geq0$$ for $t=1/14$.
            \item $-a-b\geq a\geq b$: we have $\mu(X;\lambda)=-a-b$ and $\mu(D;\lambda)=2(a+b)$. It follows that $$\mu^t(X,D;\lambda)\geq-\frac{6}{7}(a+b)\geq0$$for $t=1/14$.
            \item $a\geq -a-b\geq b$: we have $\mu(X;\lambda)=a$ and $\mu(D;\lambda)=a\geq 2(a+b)\leq a$. It follows that $$\mu^t(X,D;\lambda)=a+\frac{1}{7}(a+b)=\frac{6}{7}a+\frac{1}{7}(2a+b)\geq 0$$ for $t=1/14$.
        \end{enumerate}
     \item Suppose the weight of $\lambda$ is $(1,-1;a+b,-a,-b)$. Then we have $$\mu(X;\lambda)=\max\{a-1,b-1,1-a-b\}.$$ Again, we can assume that $a\geq b$. There are still three subcases to discuss.   
     \begin{enumerate}[(i)]
            \item $a\geq b\geq -a-b$ : If $a-1\geq 1-a-b$, i.e. $b\geq 2-2a$, then we have $\mu(X;\lambda)=a-1$ and $\mu(D;\lambda)=2+2(a+b)$. It follows that $$\mu^\frac{1}{14}(X,D;\lambda)=a-1+\frac{1}{7}(1+a+b)\geq a-1+\frac{1}{7}(3-a)=\frac{2}{7}(3a-2) \geq0$$ since we have $a\geq b\geq 2-2a$. Similarly, if $1-a-b\geq a-1$, i.e. $b\leq 2-2a$, then we have $\mu(X;\lambda)=1-a-b$ and $\mu(D;\lambda)=2+2(a+b)$. It follows that 
            \begin{equation}\nonumber
            \begin{split}
                \mu^\frac{1}{14}(X,D;\lambda)&=1-a-b+\frac{1}{7}(1+a+b)\\
                &= \frac{2}{7}(4-3a-3b)\\
                &=\frac{2}{7}((4-4a-2b)+(a-b))\geq0.
            \end{split}    
          \end{equation} 
            \item $-a-b\geq a\geq b$ : we have $\mu(X;\lambda)=1-a-b$ and $\mu(D;\lambda)=2+2(a+b)$. It follows that 
            \begin{equation}\nonumber
            \begin{split}
                \mu^\frac{1}{14}(X,D;\lambda)&=1-a-b+\frac{1}{7}(1+a+b)\\
                &= \frac{2}{7}(4-3a-3b)\geq0,
            \end{split}    
          \end{equation} since in this case we always have $a+b\geq 0$. 
            \item $a\geq -a-b\geq b$ : If $a-1\geq 1-a-b$, i.e. $b\geq 2-2a$, then we have $\mu(X;\lambda)=a-1$ and $\mu(D;\lambda)=2+2(a+b)$. It follows that $$\mu^\frac{1}{14}(X,D;\lambda)=a-1+\frac{1}{7}(1+a+b)\geq a-1+\frac{1}{7}(3-a)=\frac{2}{7}(3a-2) \geq0$$ since we have $a\geq b\geq 2-2a$. Similarly, if $1-a-b\geq a-1$, i.e. $b\leq 2-2a$, then we have $\mu(X;\lambda)=1-a-b$ and $\mu(D;\lambda)=2+2(a+b)$. It follows that 
            \begin{equation}\nonumber
            \begin{split}
                \mu^\frac{1}{14}(X,D;\lambda)&=1-a-b+\frac{1}{7}(1+a+b)\\
                &= \frac{2}{7}(4-3a-3b)\\
                &=\frac{2}{7}((4-4a-2b)+(a-b))\geq0.
            \end{split}    
          \end{equation} 
        \end{enumerate}
     \item The case when the weight of $\lambda$ is $(-1,1;a+b;-a;-b)$ is similar to the case (2), so we omit it here.   
    \end{enumerate}
\end{proof}

\begin{prop}\label{9}
    Let $X\subseteq \mb{P}^1\times\mb{P}^2$ be a quintic del Pezzo surface with exactly one $A_1$-singularity at $p$, and $D\in |-2K_X|$ a boundary divisor. Then for $t=\frac{1}{14}+\varepsilon$, the point $(X,D)$ is t-semistable if and only if $p\notin D$ and $D$ contains no curves of multiplicity $4$.
\end{prop}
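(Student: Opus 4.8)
The plan is to reduce the statement for an arbitrary quintic del Pezzo surface $X$ with a single $A_1$-singularity to the single "critical" pair analyzed in Lemma \ref{5}, together with a destabilization argument for the two bad loci ($p\in D$, or $D$ containing a multiplicity-$4$ curve). First I would normalize coordinates: as in the proof of Lemma \ref{11}, a surface with one $A_1$-singularity at $p$ can be brought to a standard form, and one checks that the stabilizer of $(X,p)$ in $G=\SL(2)\times\SL(3)$ acts on $|-2K_X|$ so that, up to this stabilizer, there are finitely many combinatorial "types" of $D$ relative to the flag data at $p$ (whether $p\in D$, the multiplicity of $D$ at $p$, whether a component of $D$ through $p$ is the distinguished line, etc.). The point is that $t$-semistability at $t=\tfrac{1}{14}+\varepsilon$ depends only on this combinatorial type, by the Hilbert–Mumford criterion and Proposition on the HM-invariant $\mu^{a\eta+b\xi}=a\mu(f;\lambda)+b\mu(g;\lambda)$.

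The two destabilization directions are the easy half. If $p\in D$, I would exhibit a $1$-PS $\lambda$ (a small perturbation of the weight vectors $(\pm3,\mp3;\pm4,\mp2,\mp2)$ appearing in Lemma \ref{5}) adapted to the flag at $p$, for which $\mu(f;\lambda)=-1$ while the extra vanishing of $g$ at $p$ forces $\mu(g;\lambda)\le 14-\delta$ for some $\delta>0$ coming from the lowered weight of the minimal-weight monomial of $g$; then $\mu^t(X,D;\lambda)=\mu(f;\lambda)+t\,\mu(g;\lambda)<0$ for $t=\tfrac1{14}+\varepsilon$ with $\varepsilon$ small. Similarly, if $D$ contains a curve of multiplicity $4$ — necessarily a line, as in Lemma \ref{5} where $D=\{u^2x^2=0\}$ — that line can be moved by the stabilizer into the position $\{u=x=0\}$ (or $\{x=0\}$ on the $\mathbb{P}^2$ factor), and the $1$-PS of weight $(\pm3,\mp3;\pm4,\mp2,\mp2)$ (resp. $(0,0;\pm4,\mp2,\mp2)$ after rescaling) gives $\mu^{1/14}(X,D;\lambda)=0$ and $\mu^t<0$ for $t>\tfrac1{14}$; this is exactly the boundary computation of Lemma \ref{5} read in the other direction. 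One must check these are the only multiplicity-$4$ possibilities: a plane quartic curve $D\in|-2K_X|$ with a component of multiplicity $4$ has that component equal to a line $\ell$ with $2\ell$ cut out on the $\mathbb{P}^2$-factor and the $\mathbb{P}^1$-coordinate appearing squared, which is a finite check on the bidegree $(2,2)$.

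The converse — that when $p\notin D$ and $D$ has no multiplicity-$4$ component the pair is $t$-semistable for $t=\tfrac1{14}+\varepsilon$ — is the main obstacle. The cleanest route is a two-step argument. Step one: for the specific critical pair $(X_0,D_0)$ of Lemma \ref{5} we already know $\mu^{1/14}\ge0$ for all $\lambda$, and the analysis there shows the equality $\mu^{1/14}=0$ is attained only along the $1$-PS's of weight $\propto(\pm3,\mp3;\pm4,\mp2,\mp2)$; for a general $(X,D)$ with $p\notin D$, one perturbs: write $D$ near the critical configuration and show that moving off the multiplicity-$4$ locus and off $p$ strictly increases $\mu^{1/14}(X,D;\lambda)$ along precisely those critical $1$-PS's, so that $\mu^{1/14}(X,D;\lambda)>0$ there while staying $\ge0$ elsewhere by semicontinuity; hence $\mu^{t}(X,D;\lambda)=\mu^{1/14}+(\,t-\tfrac1{14})\mu(g;\lambda)\ge 0$ for all $\lambda$ once $\varepsilon$ is small, because the finitely many "walls" in $\lambda$-space are controlled. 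Step two, to make "finitely many walls" precise: fix the standard coordinates at $p$, and run the same case division as in Lemma \ref{5} — diagonal $1$-PS's with weights $(t_0,t_1;s_0,s_1,s_2)$, $t_0\in\{-1,0,1\}$ after scaling — computing $\mu(f;\lambda)$ from the fixed equation of $X$ and bounding $\mu(g;\lambda)$ using that $g$ does not vanish at $p$ and has no multiplicity-$4$ component (these two hypotheses each rule out one otherwise-destabilizing family of monomials). I expect the bookkeeping in Step two to be the real work, exactly parallel to the three cases and nine subcases of Lemma \ref{5} but with the generic $g$ in place of $u^2x^2$; the hypotheses $p\notin D$ and "no multiplicity-$4$ curve" are precisely what is needed to close subcases (1)(i) and (2)(i) of that lemma with a strict inequality. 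No new geometric input beyond Lemma \ref{11} (classification of singular $X$), Lemma \ref{5}, and the HM formula should be required.
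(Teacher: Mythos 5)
Your ``only if'' half is essentially the paper's: the $1$-PS of weight $(3,-3;4,-2,-2)$ destabilizes any $D$ through $p$ (the absence of the monomial $u^2x^2$ caps $\mu(g;\lambda)$ at $8$, so $\mu^t\le -1+8t<0$ for $t<1/8$), and Lemma \ref{5} rules out the multiplicity-$4$ curve at $t>1/14$. The genuine gap is in your converse. Step One rests on the claim that for $D$ with $p\notin D$ and no multiplicity-$4$ component, $\mu^{1/14}(X,D;\lambda)$ becomes \emph{strictly} positive along the critical one-parameter subgroups. That is false: for $\lambda$ of weight $(3,-3;4,-2,-2)$ the monomial $u^2x^2$ --- present precisely because $p\notin D$, and not removable by adding multiples of $f$ --- still realizes the maximal weight $14$, so $\mu^{1/14}(X,D;\lambda)=-1+\tfrac{14}{14}=0$ for \emph{every} such $D$, not only for $D_0$. (This particular $\lambda$ is harmless, since $\mu(g;\lambda)=14>0$ means raising $t$ raises $\mu^t$; but it shows the perturbation/semicontinuity mechanism does not deliver strict positivity.) What must actually be excluded, by Lemma \ref{8}(ii), is a $1$-PS $\sigma$ with $\mu^{1/14}(X,D;\sigma)=0$ and $\mu(g;\sigma)<0$, where $\sigma$ ranges over all $1$-PS of $\SL(2)\times\SL(3)$ --- that is, over diagonal weights in \emph{all} coordinate systems, not just the coordinates adapted to $(X,D_0)$. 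For an arbitrary $D$ this is a far larger case analysis than the nine subcases of Lemma \ref{5}, and it is exactly the part you defer to ``Step Two bookkeeping.'' As written, the hard direction is therefore not proved.

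The paper avoids this computation entirely, and you may want to adopt its route. At $t=1/14$ it degenerates $(X,D)$ to $(X,D_0)$ via the $\mb{G}_m$-action of weight $(1,-1;2,0,0)$ (possible precisely because $a_{u^2x^2}\neq0$) and uses that the unstable locus is closed and $G$-invariant, so the single verification of Lemma \ref{5} propagates to all such $(X,D)$. At $t=1/14+\varepsilon$ it never checks the HM criterion for a general $D$: the VGIT wall-crossing structure shows a general such pair is $t$-stable, and the exceptional locus $E_{+}$ of $\ove{M}^{\GIT}(1/14+\varepsilon)\rightarrow\ove{M}^{\GIT}(1/14)$ is identified with all of $\mb{P}(1^2,2^5,3^5,4^3)/G_0$ by Zariski's Main Theorem, forcing every $D$ with $a_{u^2x^2}\neq0$ and $D\neq D_0$ to occur among the semistable points. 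One further point common to both arguments: the passage from ``$D$ contains no curve of multiplicity $4$'' to ``$D\neq D_0$'' needs the uniqueness of the multiplicity-$4$ configuration avoiding $p$; you flag this but do not settle it, and neither does the paper.
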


\begin{proof}
    We may assume $X$ is given by the equation $uyz+vx(y+z)=0$ with an $A_1$-singularity at $p=((1:0:0),(1:0))$. If $D$ passes through $p$, then any defining equation of $D$ does not contain the term $u^2x^2$. Taking the 1-PS $\lambda$ of weight $(3,-3;4,-2,-2)$, one has that $\mu(X,\lambda)=-1$ and $\mu(D;f)\leq 8$ so that $\mu^t(X,D;\lambda)<0$. In fact, the same argument shows that if $\mult_pD\geq 2$, then $(X,D)$ is t-unstable for any $0<t<1/2$.
    
    Now we may assume $D$ does not contain $p$ and $D\neq D_0:=\{x^2u^2=0\}$ by Lemma \ref{5}. Then any defining equation for $D$ has a non-zero $x^2u^2$ term. The $G_m$-action $$\lambda((u:v),(x:y:z))=((\lambda u:\lambda^{-1}v),(\lambda^2x:y:z))$$ induces a 1-PS, where the central fibre is $(X,D_0)$ and any other fibre is isomorphic to $(X,D)$. By Lemma \ref{5} and openness of GIT-semistability, we have that $(X,D)$ is t-semistable for $t=1/14$. The wall-crossing structure for VGIT implies that at least one (and hence a general one) of the $D\neq D_0$ such that $p\notin D$ satisfies that $(X,D)$ is t-semistable for $t=1/14+\varepsilon$. Moreover, since for a general such $D$, the automorphism group of $(X,D)$ is finite, then $(X,D)$ is in fact t-stable for $t=1/14+\varepsilon$. Notice that we have an exact sequence of group scheme $$0\longrightarrow \Aut^0(X)=\mb{G}_m\longrightarrow \Stab(X)\longrightarrow G_0\longrightarrow0,$$ where $\Stab(X)$ is the stabilizer of $X$ under the $\SL(2)\times \SL(3)$-action and $G_0$ is a finite group. Let $V$ be the affine subspace of $H^0(X,\mtc{O}_{X_n}(2,2))$ consisting of polynomials whose coefficient of $x^2u^2$ is $1$. Viewing $V$ as a vector space, the $G_m$-action on $V\setminus \{0\}$ is induced by $$t((u:v),(x:y:z))=((tu:t^{-1}v),(t^2x:y:z)).$$ Then $(V-\{0\})/\mb{G}_m$ is a weighted projective space $\mb{P}(1^2,2^5,3^5,4^3)$ with a $G$-action. Let $E_{+}$ be the exceptional locus of the wall-crossing morphism $$\ove{M}^{\GIT}(1/14+\varepsilon)\rightarrow \ove{M}^{\GIT}(1/14).$$ Observe that there is an injective dominant morphism $E_{+}\hookrightarrow \mb{P}(1^2,2^5,3^5,4^3)/G$. Since both of them are proper and normal, and the later is irreducible, then this is an isomorphism by Zariski's Main Theorem (ref. \cite[Theorem 3.11.4]{Har13}).
    
\end{proof}

It is seen in the proof to Proposition \ref{9} that for the quintic del Pezzo surface $X$ with exactly one $A_1$-singularity, and $D\in |-2K_X|$ such that $p\in D$, we have that $(X,D)$ is t-unstable for any $t<1/2$ if $\mult_pD\geq 2$; for any $t<1/8$ if $\mult_pD=1$. Now we prove the following result, which is a complement to this statement.

\begin{prop}\label{10}
    Let $X\subseteq \mb{P}^1\times\mb{P}^2$ be a quintic del Pezzo surface $X$ with exactly one $A_1$-singularity, and $D\in |-2K_X|$ such that $\mult_pD=1$. Then $(X,D)$ is t-semistable for $t=1/8$.
\end{prop}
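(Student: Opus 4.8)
The plan is to verify the Hilbert--Mumford criterion directly at $t=1/8$, mirroring the proof of Lemma \ref{5}. As there, we may assume $X=\{uyz+vx(y+z)=0\}$, with its $A_1$-singularity at $p=((1:0),(1:0:0))$, and we represent the class of $D$ in $H^0(X,\mtc{O}_X(2,2))=H^0(\mb{P}^1\times\mb{P}^2,\mtc{O}(2,2))/\langle uf,vf\rangle$ by a bidegree $(2,2)$ polynomial $g$. The first step is to record what $\mult_pD=1$ means for $g$. Working in the affine chart $u=x=1$ with coordinates $v,y,z$ — so that $X$ is the node $yz+v(y+z)=0$ and $\mtf{m}_p=(v,y,z)$ — one sees that the coefficients in $g$ of the four monomials $u^2x^2$, $uvx^2$, $u^2xy$, $u^2xz$ are unchanged when $g$ is altered by a multiple of $uf$ or $vf$, and that $\mult_pD=1$ is equivalent to requiring the coefficient of $u^2x^2$ to vanish and at least one of the coefficients of $uvx^2,u^2xy,u^2xz$ to be nonzero. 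Consequently, writing $w_\lambda(m)$ for the $\lambda$-weight of a monomial $m$, every $1$-PS $\lambda$ satisfies the a priori bound $\mu(g;\lambda)\ge L(\lambda):=\min\{w_\lambda(uvx^2),w_\lambda(u^2xy),w_\lambda(u^2xz)\}$, since one of these three monomials appears in every representative of $[g]$.

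Next, exactly as in the proof of Lemma \ref{5}, we reduce to $1$-PS $\lambda$ diagonal in the coordinates $(u,v;x,y,z)$, and (using the $y\leftrightarrow z$ symmetry of $X$) of weight $(\varepsilon,-\varepsilon;a+b,-a,-b)$ with $\varepsilon\in\{0,1,-1\}$ and $a\ge b$. A direct computation then gives $\mu(f;\lambda)=\max\{\varepsilon-a-b,\ a-\varepsilon\}$ and $L(\lambda)=\min\{2a+2b,\ 2\varepsilon+b\}$, and the sign of $2a+b-2\varepsilon$ controls both extrema at once: if $2a+b\ge 2\varepsilon$ then $\mu(f;\lambda)=a-\varepsilon$, $L(\lambda)=2\varepsilon+b$, and
\[
\mu(f;\lambda)+\tfrac18 L(\lambda)=\tfrac18\bigl(8a+b-6\varepsilon\bigr)=\tfrac18\bigl(3(2a+b-2\varepsilon)+2(a-b)\bigr)\ge 0 ;
\]
if $2a+b\le 2\varepsilon$ then $\mu(f;\lambda)=\varepsilon-a-b$, $L(\lambda)=2a+2b$, and $\mu(f;\lambda)+\tfrac18 L(\lambda)=\tfrac18(8\varepsilon-6(a+b))$, which is $\ge 0$ because $2a+b\le 2\varepsilon$ and $a\ge b$ force $a+b\le\tfrac43\varepsilon$ (otherwise $a<\tfrac23\varepsilon<b$, contradicting $a\ge b$). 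In both cases equality holds only for $a=b=\tfrac23\varepsilon$, i.e. for $\lambda$ a positive multiple of $(3,-3;4,-2,-2)$ or of its inverse, the $1$-PS whose $\mb{G}_m$ is $\Aut^0(X)$. Hence $\mu^{1/8}((X,D);\lambda)=\mu(f;\lambda)+\tfrac18\mu(g;\lambda)\ge\mu(f;\lambda)+\tfrac18 L(\lambda)\ge 0$ for every $\lambda$, so $(X,D)$ is $1/8$-semistable.

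All the substance sits in the two ingredients above: translating $\mult_pD=1$ into the presence of one of the three distinguished monomials in $g$ modulo $\langle uf,vf\rangle$, and the elementary inequality $\mu(f;\lambda)+\tfrac18 L(\lambda)\ge 0$ (a case split of the same flavour as in Lemma \ref{5}). I expect the only delicate point to be that the crude estimate $L(\lambda)$, which uses nothing about $D$ except $\mult_pD=1$, must remain sharp enough at the two boundary $1$-PS $(\pm 3,\mp 3;\pm 4,\mp 2,\mp 2)$, where the inequality becomes an equality — which is exactly the manifestation of $t=1/8$ being a VGIT wall. (One could instead first replace $(X,D)$ by its limit $(X,D_1)$ under $\lambda_0=(3,-3;4,-2,-2)$, which is legitimate because $\mu^{1/8}((X,D);\lambda_0)=0$ and $\mult_pD_1=1$ is preserved, with $D_1=\{c_1u^2xy+c_2u^2xz+c_3uvx^2=0\}|_X$; but checking $1/8$-semistability of $(X,D_1)$ takes essentially the same work.)
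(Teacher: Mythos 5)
Your proposal is correct and follows essentially the same route as the paper: the identical translation of $\mult_pD=1$ into the presence of one of the monomials $uvx^2,u^2xy,u^2xz$ in every representative of $g$ modulo $\langle uf,vf\rangle$, followed by the Lemma \ref{5}-style Hilbert--Mumford case analysis over diagonal one-parameter subgroups (which the paper only invokes by reference), and your algebra checks out, with equality exactly at the $\mb{G}_m$ of weight $(\pm3,\mp3;\pm4,\mp2,\mp2)$ as expected at the wall $t=1/8$. The only cosmetic difference is that the paper first specializes $(X,D)$ to the three-monomial divisor $(X,D_0)$ and uses openness of semistability, whereas you absorb that step into the uniform lower bound $\mu(g;\lambda)\ge L(\lambda)$.
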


\begin{proof}
    The condition $\mult_pD=1$ comes down to saying that for any defining equation $g$ of $D$, $g$ has no $x^2u^2$ term, but at least one of the coefficients of $u^2xy,u^2xz,uvx^2$ is non-zero. Notice that for any $D$ such that $\mult_pD=1$, there is a $\mb{G}_m$-action inducing a degeneration from $(X,D)$ to $(X,D_0)$, where $D_0$ has only $u^2xy,u^2xz,uvx^2$. Thus by the openness of GIT-semistability, it suffices to show that $(X,D_0)$ is $\frac{1}{8}$-semistable for any $D_0$ given by the equation $$au^2xy+bu^2xz+cuvx^2=0,$$ where $(a:b:c)\in \mb{P}^2$. Indeed, this is true using the same argument as in the proof of Lemma \ref{5}.
\end{proof}

The next result involves the surfaces with an $A_2$ or an $A_3$ singularity. The proof is identical to that of Proposition \ref{10}, hence we wrap up the statements together and omit the details of the proof but only give the 1-parameter subgroups that imposes conditions on the stability thresholds.

\begin{prop}
    Let $X\subseteq \mb{P}^1\times\mb{P}^2$ be a quintic del Pezzo surface with exactly one singularity $p$, and $D\in |-2K_X|$ a boundary divisor.
    \begin{enumerate}[(1)]
        \item Suppose $X=\{uyz+v(xz-y^2)=0\}$ and $p$ is an $A_2$-singularity. 
        \begin{enumerate}[(i)]
            \item If $p\notin D$, then $(X,D)$ is $\frac{1}{6}$-semistable but not $t$-semistable for $t<1/4$;
            \item if $\mult_pD=1$, then $(X,D)$ is $\frac{1}{4}$-semistable but not $t$-semistable for $t<1/4$; 
            \item if $\mult_pD\geq 2$, then $(X,D)$ is t-unstable for any $0<t<1/2$.
        \end{enumerate}  
        \item Suppose $X=\{ux^2+v(xz-y^2)=0\}$ and $p$ is an $A_3$-singularity. 
        \begin{enumerate}[(i)]
            \item If $p\notin D$, then $(X,D)$ is $\frac{1}{4}$-semistable but not $t$-semistable for $t<1/4$;
            \item if $\mult_pD=1$, then $(X,D)$ is $\frac{1}{3}$-semistable but not $t$-semistable for $t<1/3$; 
            \item if $\mult_pD\geq 2$, then $(X,D)$ is t-unstable for any $0<t<1/2$.
        \end{enumerate} 
    \end{enumerate}
    
    Then for $1/4<t<1/3$, the point $(X,D)$ is t-semistable if and only if $p\notin D$ and $D\neq $
\end{prop}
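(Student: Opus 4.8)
The plan is to follow the template of Lemma~\ref{5} and Propositions~\ref{9} and~\ref{10}. For each of the two surfaces, and for each of the three strata $p\notin D$, $\mult_p D=1$, $\mult_p D\ge 2$, there are two tasks: (A) produce an explicit destabilizing one-parameter subgroup for every $t$ strictly below the asserted threshold $T$ — and, in the stratum $\mult_p D\ge 2$, for every $t\in(0,\tfrac12)$ — and (B) prove that $(X,D)$ is $T$-semistable.

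For (A) I would begin from the one-parameter subgroups already used in \S3.1, namely the weight $(1,-1;-2,0,2)$ on $X=\{uyz+v(xz-y^2)=0\}$ and the weight $(1,-1;-1,0,1)$ on $X=\{ux^2+v(xz-y^2)=0\}$, which give $\mu^t(X,D;\lambda)\le -1+ct$ with $c=6$, resp. $c=4$; these already dispatch the strata $p\notin D$, where the threshold is $T=1/c$. When $\mult_p D\ge 1$ (resp. $\mult_p D\ge 2$), every lift of the equation $g$ of $D$ misses the monomial of order $0$ at $p$ (resp. also those of order $1$), and one refines $\lambda$ accordingly: a diagonal one-parameter subgroup adapted to this extra vanishing sharpens the estimate to $\mu^t(X,D;\lambda)\le -1+c't$ with the smaller slope $c'$ prescribed by $T$ — $c'=4$ (resp. $3$) for $\mult_p D=1$ on the $A_2$ (resp. $A_3$) surface, and $c'\le 2$ when $\mult_p D\ge 2$. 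Since the statement asks only for the subgroups, part (A) reduces to writing down one weight vector per stratum and a one-line weight count for each.

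For (B), fix a stratum, say $\mult_p D=1$ on the $A_2$-surface. Exactly as in the proof of Proposition~\ref{10}, the $\mathbb{G}_m$-action adapted to $p$ degenerates $(X,D)$ to a pair $(X,D_0)$ in which $D_0$ is cut out only by the monomials of order $\le 1$ at $p$, giving a two-parameter family of normal forms; when $p\notin D$ the limit is the single pair cut out by the order-$0$ monomial at $p$, and when $\mult_p D\ge 2$ there is no semistable limit, consistently with (A). By openness of GIT-semistability it then suffices to check that $(X,D_0)$ is $T$-semistable for these normal forms. Here, as in the proof of Lemma~\ref{5} (and because $\Aut^0(X)$ is a subtorus of $\SL(2)\times\SL(3)$), it is enough to run the Hilbert--Mumford criterion over the diagonal one-parameter subgroups of weight $(t_0,-t_0;s_0,s_1,s_2)$, and after acting by the Weyl group one may order the $s_i$. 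The check then splits, exactly as in Lemma~\ref{5}, first according to whether $t_0=0$ and then, inside each resulting chamber, according to which monomial of $f$ and of $g$ realizes the minimal weight; in every sub-cone $\mu^T(X,D_0;\lambda)$ is a nonnegative linear form in $(t_0,s_0,s_1,s_2)$ that vanishes precisely along the rays spanned by the destabilizing subgroups of (A) — which is what forces the threshold to be exactly $T$ and not smaller.

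The real work lies in this last polyhedral check in (B): identifying the normal forms $D_0$, and then verifying $\mu^T\ge 0$ with the correct equality loci over all chambers and all ``minimal-monomial'' subdivisions — the part that occupies most of the proof of Lemma~\ref{5}, and that is somewhat longer here because the $A_3$ equation and the $\mult_p D=1$ strata involve more monomials. Everything else — part (A), the degeneration statements, and the reduction to diagonal subgroups — is routine. Finally, the closing characterization for $\tfrac14<t<\tfrac13$ follows by reading off, from the per-stratum thresholds above (together with those for smooth surfaces and for surfaces with two $A_1$-points) and the list of VGIT walls, exactly which pairs remain semistable throughout that open chamber; the divisors $D$ to be excluded are the finitely many normal forms $D_0$ that are only strictly semistable at the wall $t=\tfrac14$ and are destabilized for $t$ slightly larger, in complete analogy with the divisor $\{u^2x^2=0\}$ of Proposition~\ref{9}.
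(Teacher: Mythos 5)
Your proposal follows essentially the same route as the paper's own (very terse) proof: the paper simply invokes the argument of Proposition~\ref{10} — degeneration of $(X,D)$ to a normal form $(X,D_0)$ adapted to the order of $D$ at $p$, openness of GIT-semistability, and a Lemma~\ref{5}-style Hilbert--Mumford check over diagonal one-parameter subgroups — and records exactly the two destabilizing weights $(1,-1;-2,0,2)$ and $(1,-1;-1,0,1)$ that you also use, with the per-stratum slopes coming from the vanishing of the low-order monomials at $p$. Your sketch reconstructs this skeleton faithfully (indeed in more detail than the paper gives), so it is the same approach and correct.
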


\begin{proof}
    Consider the $\mb{G}_m$-action of weight $(1,-1:-2,0,2)$ for (1), and weight $(1,-1;-1,0,1)$ for (2).
\end{proof}

\subsection{Surfaces with two singularities}

For the rest of the this section, let us deal with the surface $X$ with exactly two $A_1$-singularities $p,q$. The complexity stems from the fact that this surface is toric, and hence has a number of $\mb{G}_m$-actions.

As before, we may assume the defining equation of $X$ is $uyz+vx^2=0$, and hence $p=((0:0:1),(0:1))$ and $q=((0:1:0),(0:1))$.

\begin{lemma}
  Let $D\in |-2K_X|$ be a divisor. If either $\mult_pD\geq 2$ or $\mult_qD\geq 2$, then $(X,D)$ is t-unstable for any $0<t<1/2$.
\end{lemma}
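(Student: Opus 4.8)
The plan is to exhibit, for any divisor $D$ with a point of high multiplicity at $p$ or $q$, a single destabilizing one-parameter subgroup that works uniformly for all $0<t<1/2$. By the symmetry of $X=\{uyz+vx^2=0\}$ that interchanges $p=((0:0:1),(0:1))$ and $q=((0:1:0),(0:1))$ (swap $y\leftrightarrow z$), it suffices to treat the case $\mult_pD\geq 2$. First I would recall that $\mu^t(X,D;\lambda)=\mu(f;\lambda)+t\mu(g;\lambda)$ by the HM-invariant formula, where $g$ is a representative of $D$ of minimal $\lambda$-weight; so I want a 1-PS $\lambda$ with $\mu(f;\lambda)<0$ and $\mu(g;\lambda)$ bounded above by something small enough that $\mu(f;\lambda)+\tfrac12\mu(g;\lambda)<0$ as well, which forces $\mu^t(X,D;\lambda)<0$ on the whole interval.

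The key step is choosing $\lambda$. Since $p=((0:0:1),(0:1))$, I would work in the affine chart $z=1$, $v=1$ near $p$, with local coordinates $u,x,y$; the condition $\mult_pD\geq 2$ says the defining polynomial $g$ of $D$, when expanded, has no constant and no linear term in these local coordinates, i.e. every monomial of $g$ has $z$-degree (in the $\mb{P}^2$ factor) at most $0$ relative to total degree $2$ — concretely, $g$ cannot contain $z^2u^2$ (the "point" term) nor the three "linear" terms $uvz^2$, $u^2yz$, $u^2xz$. I would then take the 1-PS $\lambda$ of weight $(s_0,s_1;r_0,r_1,r_2)$ on $(u:v;x:y:z)$ with weights chosen so that $u$ and $v$ carry small positive weight at $p$ while $x,y$ carry weight larger than $z$, balanced to be trace-zero; a natural candidate is weight $(1,-1;1,1,-2)$ or a rescaling such as $(3,-3;2,2,-4)$. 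For such $\lambda$ one computes $\mu(f;\lambda)$ directly from the two monomials $uyz$ and $vx^2$ of $f$ — only one survives as the minimal-weight term — and one gets $\mu(f;\lambda)<0$; and because the four forbidden monomials of $g$ are exactly the ones of largest $\lambda$-weight, the minimal-weight surviving monomial of $g$ has $\lambda$-weight small enough that $\mu(g;\lambda)$ is controlled, giving $\mu(f;\lambda)+t\,\mu(g;\lambda)<0$ for all $t<1/2$. I would record the exact weight together with the two numbers $\mu(f;\lambda)$ and the bound on $\mu(g;\lambda)$, exactly as in the proofs of Propositions~\ref{6}--\ref{2}.

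The main obstacle I anticipate is bookkeeping: one must verify that the chosen $\lambda$ really does kill (or push to high weight) \emph{all} monomials of $H^0(\mb{P}^1\times\mb{P}^2,\mtc{O}(2,2))$ that could appear in $g$ when $\mult_pD\geq2$, and simultaneously that the same $\lambda$ lowers the weight of $f$. It is easy to pick a $\lambda$ that works for the "multiplicity $\geq 2$" condition but that happens to be $\mu(f;\lambda)=0$, which would only give instability for $t>0$ strictly but might fail to be uniform, or conversely a $\lambda$ that destabilizes $f$ but leaves some quadratic monomial of $g$ with too-small weight. Getting a single $\lambda$ with \emph{both} strict inequalities $\mu(f;\lambda)<0$ and $\mu(f;\lambda)+\tfrac12\mu(g;\lambda)<0$ is the crux; once that $\lambda$ is in hand the conclusion is immediate from the HM criterion. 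The case $\mult_qD\geq 2$ then follows by applying the coordinate swap $y\leftrightarrow z$, which preserves $f$ and $\mtc{O}(2,2)$, to the 1-PS found for $p$.
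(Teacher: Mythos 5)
Your overall strategy is the right one and is the same as the paper's: reduce to $\mult_pD\geq 2$ by the symmetry $y\leftrightarrow z$, translate the multiplicity condition into the vanishing of the four monomials of order $<2$ at $p$, and exhibit a single $1$-PS $\lambda$ with $\mu(f;\lambda)<0$ and $\mu(g;\lambda)$ small enough that $\mu(f;\lambda)+\tfrac12\mu(g;\lambda)\le 0$. However, the execution has two concrete errors that together make the argument fail. First, the forbidden monomials are misidentified: at $p=((0:0:1),(0:1))$ we have $u=x=y=0$ and $v=z=1$, so in the chart $v=z=1$ the monomial not vanishing at $p$ is $v^2z^2$ and the order-one monomials are $v^2xz$, $v^2yz$, $uvz^2$; your list ($u^2z^2$, $u^2xz$, $u^2yz$, $uvz^2$) swaps the roles of $u$ and $v$, and those monomials actually vanish to order $\geq 2$ at $p$ automatically. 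Second, your candidate weight $(3,-3;2,2,-4)$ points in the wrong direction: with the paper's convention ($\mu$ is the maximum of the weights of the monomials present, and semistability means $\mu^t\geq 0$), both monomials $uyz$ and $vx^2$ of $f$ acquire weight $+1$, so $\mu(f;\lambda)=+1>0$; moreover the largest-weight $(2,2)$-monomials under this $\lambda$ are $u^2x^2$, $u^2xy$, $u^2y^2$ (weight $10$), none of which is excluded by $\mult_pD\geq 2$, so your key claim that ``the four forbidden monomials are exactly the ones of largest $\lambda$-weight'' is false and no bound on $\mu(g;\lambda)$ follows.

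The repair is to invert the $1$-PS: take $\lambda$ of weight $(-3,3;-2,-2,4)$. Then $uyz$ and $vx^2$ both have weight $-1$, so $\mu(f;\lambda)=-1$, and a monomial $u^av^bx^cy^dz^e$ has weight $3(b-a)+6e-4$, which exceeds $2$ precisely for $v^2z^2$ (weight $14$) and $v^2xz$, $v^2yz$, $uvz^2$ (weight $8$) --- exactly the four coefficients killed by $\mult_pD\geq 2$ (and killed in every lift of $[g]$, since $uf$ and $vf$ contain none of these monomials). Hence $\mu(g;\lambda)\leq 2$ and $\mu^t(X,D;\lambda)\leq -1+2t<0$ for all $0<t<1/2$, which is the paper's argument. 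As written, your proposal does not yield the strict inequalities you correctly identify as the crux.
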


\begin{proof}
    We may assume that $\mult_pD\geq 2$, i.e. for any defining equation $g$ of $D$, the coefficients of $v^2z^2,v^2xz,v^2yz$ and $uvz^2$ in $g$ are all zero. Consider the 1-PS $\lambda$ of weight $(-3,3;-2,-2,4)$. We have that $\mu(X;\lambda)=-1$ and $\mu(D;\lambda)\leq 2$, thus $\mu^t(X,D;\lambda)<0$ for any $0<t<1/2$.
\end{proof}

Observe that $p\notin D$ comes down to saying that any defining equation $g$ of $D$ has non-zero $z^2v^2$ term, and $\mult_pD\leq 1$ is equivalent to saying that at least one of the coefficients of $z^2v^2,v^2xz,v^2yz,z^2uv$ is non-zero. A similar equivalent condition for $q\notin D$ also holds.

\begin{prop}
  Let $D\in |-2K_X|$ be a divisor. Then $(X,D)$ is $\frac{1}{8}$-semistable if either $p,q\notin D$ or a defining equation of $D$ has a non-zero $v^2yz$ term. 
\end{prop}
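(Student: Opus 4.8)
The statement concerns the toric quintic del Pezzo surface $X=\{uyz+vx^2=0\}$ with the two $A_1$-singularities at $p=((0:0:1),(0:1))$ and $q=((0:1:0),(0:1))$, and asserts $\tfrac18$-semistability of $(X,D)$ under either of two hypotheses on $D$. My plan is to proceed exactly as in the proof of Lemma \ref{5}: reduce to a normal form for $D$ by a $\mathbb{G}_m$-degeneration, then verify $\mu^{1/8}(X,D;\lambda)\ge 0$ directly for all one-parameter subgroups by a case analysis on the type of the 1-PS. The key input that makes this a finite check is Proposition above (the HM-invariant splits as $\mu^{a\eta+b\xi}=a\mu(f;\lambda)+b\mu(g;\lambda)$), so with $L=\eta+\tfrac18\xi$ we must show $8\mu(f;\lambda)+\mu(g;\lambda)\ge 0$ for every normalized 1-PS $\lambda$, where $\mu(g;\lambda)$ is computed on the monomial of $g$ of minimal $\lambda$-weight.

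First I would carry out the reduction step. Because GIT-semistability is an open condition (Lemma \ref{8}(i)) and is preserved under specialization to the central fibre of a $\mathbb{G}_m$-orbit, it suffices to treat a "most degenerate" $D_0$ in each case: under the first hypothesis ($p,q\notin D$), one may use the diagonal torus acting on $(x:y:z)$ and on $(u:v)$ to scale away all monomials except a minimal set that still witnesses $p,q\notin D$ — concretely, monomials forcing the coefficients of $v^2z^2$ and $v^2y^2$ (equivalently, after coordinate symmetry, the relevant nonvanishing terms near $p$ and near $q$) to be nonzero; under the second hypothesis one degenerates to $D_0$ containing only the $v^2yz$ term together with whatever is forced by $D_0\in|-2K_X|$. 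In each case $D_0$ lies in a low-dimensional linear system with a large torus of symmetries, so the remaining 1-PS computation is uniform in $(a:b:c:\dots)$.

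Next comes the 1-PS verification. Every 1-PS of $\SL(2)\times\SL(3)$ acting on $X$ can, after conjugating by the stabilizer of $X$ (which includes the permutation of $y,z$, the swap of $p,q$, and several $\mathbb{G}_m$'s since $X$ is toric), be normalized to weight $(t_0,-t_0;a+b,-a,-b)$ with $t_0\in\{0,1\}$ — exactly the trichotomy in Lemma \ref{5}, and one may further assume $a\ge b$. For each such $\lambda$ I would write $\mu(f;\lambda)=\max$ of the weights of $uyz$ and $vx^2$ (using that $X$ contains both monomials), namely $\mu(f;\lambda)=\max\{t_0+b-a,\ t_0-a-b,\ -t_0+2(a+b)\}$ after reindexing, then bound $\mu(g;\lambda)$ below using that the forced monomial of $D_0$ (e.g. $v^2z^2$ or $v^2yz$) contributes a specific linear form in $a,b,t_0$. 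Assembling $8\mu(f;\lambda)+\mu(g;\lambda)$ in each of the $2\times 3=6$ subcases reduces to checking that an explicit linear functional is nonnegative on the polyhedral cone cut out by the case inequalities, which is routine.

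The main obstacle I anticipate is the reduction step rather than the 1-PS bookkeeping: one must make sure the chosen normal form $D_0$ really is a specialization of \emph{every} $D$ satisfying the hypothesis (so that semistability of $D_0$ propagates back up), and in the second case one must check that the single monomial $v^2yz$, once the $|-2K_X|$ constraint is imposed, genuinely suffices to push $\mu(g;\lambda)$ up to the required bound for the 1-PS's that are "bad" for the surface alone (those with $\mu(f;\lambda)=-1$). Because $X$ has many $\mathbb{G}_m$-actions, there are more destabilizing candidates to rule out than in the single-singularity cases, so the case list is longer; but each case is of the same elementary nature as in Lemma \ref{5}, and I would organize the write-up by first recording the normal forms and the (short) list of normalized 1-PS types, then tabulating $\mu(f;\lambda)$, $\mu(g;\lambda)$, and $8\mu(f;\lambda)+\mu(g;\lambda)$ across the subcases.
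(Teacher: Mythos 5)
Your proposal matches the paper's proof: the paper also degenerates $(X,D)$ via a $\mathbb{G}_m$-action to a central fibre $(X,D_0)$ with $D_0=\{v^2f(y,z)=0\}$ (where $f$ contains $y^2$ and $z^2$, resp.\ $yz$, according to which hypothesis holds), invokes openness of GIT-semistability to reduce to $(X,D_0)$, and then verifies $\mu^{1/8}\geq 0$ by the same 1-PS case analysis as in Lemma \ref{5}. The only cosmetic difference is that the paper keeps the whole quadratic $v^2f(y,z)$ as its normal form rather than degenerating further to the single monomial $v^2yz$, which slightly shortens the verification you would have to do in your second case.
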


\begin{proof}
    Notice that any such pair specializes to the pair $(X,D_0)$ (i.e. one can find a 1-PS, where general fibers are isomorphic to $(X,D)$, and the central fiber is $(X,D_0)$), where $D_0$ is defined by the equation $v^2f(y,z)$ for some quadratic homogeneous function in $y$ and $z$ which is not equal to $y^2$ or $z^2$. By the openness of GIT-semistability, it suffices prove for the pair $(X,D_0)$. This is true by the same argument as in the proof of Lemma \ref{5}.
\end{proof}

To make our notation succinct, we set $a_{x^2uv}$ be the coefficient of $x^2uv$ in a defining equation of $D$, and same for the other monomial terms. Now we only need to consider the divisor $D\in |-2K_X|$ such that either $a_{v^2yz}=a_{v^2z^2}=0$ or $a_{v^2yz}=a_{v^2y^2}=0$. We may assume the latter case. 

\begin{case}
\textup{$a_{y^2v^2}\neq0$ and $a_{v^2xz}\neq0$:}
\end{case}

Each such pair specializes via the $\mb{G}_m$-action of weight $(-3,3;-2,0,2)$ to $(X,D_1)$ where $D_0$ is defined by an equation $v^2(y^2-axz)$ for some $a\neq 0$. Consider the same $\mb{G}_m$-action on the pair $(X,D_1)$, we see that it is t-unstable for any $t<1/6$. In fact, using the same argument as in Lemma \ref{5}, we can prove the following.
\begin{lemma}
    The pair $(X,D_1)$ is $\frac{1}{6}$-semistable but not t-semistable for any $t<\frac{1}{6}$.
\end{lemma}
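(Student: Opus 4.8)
The plan is to follow the template established in the proof of Lemma \ref{5}, adapted to the toric surface $X = \{uyz + vx^2 = 0\}$ and the specific divisor $D_1$ defined by $v^2(y^2 - axz) = 0$ with $a \neq 0$. First I would record the singular points $p = ((0:0:1),(0:1))$ and $q = ((0:1:0),(0:1))$, and the two obvious one-parameter subgroups that pin down the threshold: the $\mathbb{G}_m$-action of weight $(-3,3;-2,0,2)$ gives $\mu(X;\lambda) = -1$ and $\mu(D_1;\lambda) = \mu(v^2(y^2 - axz);\lambda)$; I would check that with this and the opposite weight $(3,-3;2,0,-2)$ one forces $\mu^t(X,D_1;\lambda) \geq 0$ only when $t \geq \tfrac16$ and $t \leq \tfrac16$ respectively, so that $t = \tfrac16$ is the unique candidate for semistability and $(X,D_1)$ is $t$-unstable for $t < \tfrac16$. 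This establishes the "only if" / instability half of the statement.

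For the converse — that $(X,D_1)$ is $\tfrac16$-semistable — I would run the Hilbert–Mumford criterion over all normalized 1-PS $\lambda$, grouping them by the induced weight on the $\mathbb{P}^1$-factor as in Lemma \ref{5}: weight $(0,0;\ast)$, weight $(1,-1;\ast)$, and weight $(-1,1;\ast)$, with the $\mathbb{P}^2$-weights written as $(a+b,-a,-b)$ up to reordering (which we may normalize by $a \geq b$, using the symmetry of $X$). In each case I would compute $\mu(X;\lambda) = \mu(uyz + vx^2;\lambda)$ as the maximum of the relevant linear forms, compute $\mu(D_1;\lambda)$ from the monomials $y^2v^2$ and $xzv^2$, and verify that $\mu^{1/6}(X,D_1;\lambda) = \mu(X;\lambda) + \tfrac16 \mu(D_1;\lambda) \geq 0$ by rewriting the resulting linear expression as a nonnegative combination of the case inequalities (e.g. $a \geq b$, $a + b \geq 0$, etc.), exactly the bookkeeping carried out in Lemma \ref{5}. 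The torus symmetries of $X$ — the $\mathbb{G}_m^2$ acting on $(u:v)$ and diagonally on $(x:y:z)$ — cut down the conjugacy classes of destabilizing 1-PS one has to inspect, so only finitely many sign patterns of $(a,b)$ need to be checked in each of the three cases.

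The main obstacle I anticipate is not any single hard idea but the completeness of the case analysis: because $X$ is toric it carries a two-dimensional torus of automorphisms, so the Hilbert–Mumford computation must genuinely range over a two-parameter family of 1-PS rather than the essentially one-parameter families that appeared for the surfaces with a single singularity, and one must be careful that the chosen divisor $D_1 = v^2(y^2 - axz)$ (as opposed to $v^2 y^2$ or $v^2 z^2$, which are excluded) is what makes the weight $\mu(D_1;\lambda)$ large enough to compensate $\mu(X;\lambda)$. Concretely, I would first identify the "worst" 1-PS — almost certainly the weight-$(-3,3;-2,0,2)$ one and its conjugates under the torus — confirm equality $\mu^{1/6} = 0$ there, and then show every other 1-PS is strictly better by a convexity/interpolation argument (any 1-PS lies in the cone spanned by a few extremal ones, and $\mu^t$ is convex along that cone). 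This reduces the verification to checking the finitely many extremal 1-PS, after which the general inequality follows formally.
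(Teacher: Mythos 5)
Your proposal matches the paper's approach: the paper establishes instability for $t<1/6$ using precisely the 1-PS of weight $(-3,3;-2,0,2)$ you identify (which gives $\mu(X;\lambda)=-1$ and $\mu(D_1;\lambda)=6$), and for semistability at $t=1/6$ it simply invokes ``the same argument as in Lemma \ref{5}'', i.e.\ the case-by-case Hilbert--Mumford verification you outline. One small caution: unlike the divisor $u^2x^2$ in Lemma \ref{5}, the divisor $D_1=v^2(y^2-axz)$ is not preserved by the $y\leftrightarrow z$ symmetry of $X=\{uyz+vx^2=0\}$, so the normalization $a\geq b$ of the $\mathbb{P}^2$-weights is not automatic and both orderings (or a separate symmetry argument using the torus acting on $X$) must be treated.
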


\begin{case}
\textup{$a_{y^2v^2}\neq0$, $a_{v^2xz}=0$ and $a_{uvz^2}\neq0$:}
\end{case}

Each such pair specializes via the $\mb{G}_m$-action of weight $(-6,6;-4,-1,5)$ to $(X,D_2)$ where $D_2$ is defined by an equation $v(vy^2+az^2u)$ for some $a\neq 0$. As in the previous case, we have the following result. 
\begin{lemma}
    The pair $(X,D_3)$ is $\frac{1}{5}$-semistable but not t-semistable for any $t<\frac{1}{5}$.
\end{lemma}

For the remaining cases, we can assume moreover $a_{y^2v^2}=0$.

\begin{case}
\textup{$a_{v^2xy}\neq0$ and $a_{v^2xz}\neq0$:}
\end{case}

Each such pair specializes via the $\mb{G}_m$-action of weight $(-3,3;-2,1,1)$ to $(X,D_3)$ where $D_3$ is defined by an equation $v^2x(y+az)$ for some $a\neq 0$. Similar as above case, we have the following result.
\begin{lemma}
    The pair $(X,D_3)$ is $\frac{1}{5}$-semistable but not t-semistable for any $t<\frac{1}{5}$.
\end{lemma}

Now we can assume at least one of $a_{v^2xy}$ and $a_{v^2xz}$ is zero. Suppose $a_{v^2xz}=0$. There are two remaining cases.

\begin{case}
\textup{$a_{z^2uv}\neq0$ and $a_{v^2xy}\neq0$:}
\end{case}

Each such pair specializes via the $\mb{G}_m$-action of weight $(-3,3;-2,0,2)$ to $(X,D_4)$ where $D_4$ is defined by an equation $v(z^u+avxy)$ for some $a\neq 0$. Similarly, we have the following result.
\begin{lemma}
    The pair $(X,D_4)$ is $\frac{1}{4}$-semistable but not t-semistable for any $t<\frac{1}{4}$.
\end{lemma}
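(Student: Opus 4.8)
The plan is to follow the template of Lemma~\ref{5}: exhibit one $1$-PS that destabilizes $(X,D_4)$ for $t<\tfrac14$, and then verify $\mu^{1/4}\bigl((X,D_4);\lambda\bigr)\ge 0$ for every $1$-PS $\lambda$. Throughout I keep the model $X=\{uyz+vx^2=0\}$, with $A_1$-points $p=((0:0:1),(0:1))$ and $q=((0:1:0),(0:1))$, and I think of $D_4=\{uvz^2+av^2xy=0\}$, $a\ne 0$, as a section of $\mtc{O}_X(2,2)$, that is, modulo the two sections $u\cdot(uyz+vx^2)$ and $v\cdot(uyz+vx^2)$.

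For the instability half I would use the $1$-PS $\lambda_0$ of weight $(-3,3;-2,0,2)$; this is precisely the $\mb{G}_m$-action that degenerates a general divisor of this case to $D_4$, so it fixes $[X]\in\mb{P}^{11}$. A direct computation gives $\mu(X;\lambda_0)=-1$ (the monomials $uyz$ and $vx^2$ both have $\lambda_0$-weight $-1$) and $\mu(D_4;\lambda_0)=4$ (the monomials $uvz^2$ and $v^2xy$ both have $\lambda_0$-weight $4$, while none of the exchange monomials $u^2yz$, $uvx^2$, $uvyz$, $v^2x^2$ that could enter a different representative exceeds $4$), so that $\mu^t\bigl((X,D_4);\lambda_0\bigr)=-1+4t<0$ for $0<t<\tfrac14$, as needed.

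For the semistability half at $t=\tfrac14$ I would reduce, exactly as in Lemma~\ref{5}, to a diagonal $1$-PS $\lambda$ of weight $(r,-r;s_0,s_1,s_2)$ with $s_0+s_1+s_2=0$. Two observations then make the verification a one-liner. First, since $uvz^2$ and $v^2xy$ are not in the span of $u\cdot(uyz+vx^2)$ and $v\cdot(uyz+vx^2)$, they survive in every representative of $D_4$, so no change of representative lowers the weight and $\mu(D_4;\lambda)=\max\{2s_2,\,-2r-s_2\}$; likewise $\mu(X;\lambda)=\max\{r-s_0,\,2s_0-r\}$ because $f$ has exactly these two monomials. Bounding each maximum below by the unique convex combination of its two arguments that is independent of $s_0$, resp.\ $s_2$, gives $\mu(X;\lambda)\ge \tfrac r3$ and $\mu(D_4;\lambda)\ge-\tfrac{4r}{3}$, whence
\[
\mu^{1/4}\bigl((X,D_4);\lambda\bigr)=\mu(X;\lambda)+\tfrac14\,\mu(D_4;\lambda)\ \ge\ \tfrac r3-\tfrac r3\ =\ 0 .
\]
Equality forces $\lambda\in\{\lambda_0,\lambda_0^{-1}\}$ up to scaling; in particular $(X,D_4)$ is strictly $\tfrac14$-semistable, and, just as the distinguished divisor of Lemma~\ref{5} is semistable only at its wall, $(X,D_4)$ is $t$-unstable also for $t>\tfrac14$ (via $\lambda_0^{-1}$).

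The only place where real work hides is the reduction to diagonal $1$-PS in the second half — that is, ruling out that some $1$-PS outside the standard maximal torus (equivalently, some $1$-PS applied to a $G$-translate of $(X,D_4)$) does strictly worse — and this is dispatched by the same argument as in Lemma~\ref{5}; once it is granted, the remaining inequality is just the convexity estimate above. By contrast, the representative book-keeping that makes the analogous lemmas for $D_1,\dots,D_3$ tedious is vacuous here, since $uvz^2$ and $v^2xy$ can never be cancelled. Finally, because a general divisor of this case $\mb{G}_m$-degenerates to $D_4$, combining the two halves with the openness of GIT-semistability (Lemma~\ref{8}) yields at once that the general such pair $(X,D)$ is $\tfrac14$-semistable but not $t$-semistable for $t<\tfrac14$.
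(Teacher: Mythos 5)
Your proposal is correct and is essentially the paper's own argument: the paper proves this lemma by the same route, namely the wall one-parameter subgroup of weight $(-3,3;-2,0,2)$ (giving $\mu(X;\lambda_0)=-1$, $\mu(D_4;\lambda_0)=4$ and hence instability for $t<\tfrac14$) together with a Lemma \ref{5}-style verification over torus one-parameter subgroups at $t=\tfrac14$, with the reduction to diagonal one-parameter subgroups left implicit exactly as you leave it. Your only departure is cosmetic but welcome: the formulas $\mu(X;\lambda)=\max\{r-s_0,2s_0-r\}$ and $\mu(D_4;\lambda)=\max\{2s_2,-2r-s_2\}$ (valid because $uvz^2$ and $v^2xy$ cannot be cancelled by multiples of $uyz+vx^2$) and the convexity bounds $\mu(X;\lambda)\ge\tfrac{r}{3}$, $\mu(D_4;\lambda)\ge-\tfrac{4r}{3}$ compress the paper's case-by-case torus check into one line, and all of your weight computations are correct.
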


\begin{case}
\textup{$a_{z^2uv}\neq0$, $a_{y^2uv}\neq0$ and $a_{v^2xy}=0$:}
\end{case}

In this case, we have $\mult_pD,\mult_qD\geq 2$, and thus $(X,D)$ is t-unstable for any $t<1/2$.

\section{Relation to K-moduli spaces}

\subsection{Computation of CM-line bundles}

The main purpose of this section is to compute the CM $\mb{Q}$-line bundle for the family $(\mts{X},c\mts{D})\rightarrow U\subseteq \mb{P}\mtc{E}$, where $U$ is the Zariski big open subset of $\mb{P}\mtc{E}$ parameterizing complete intersections $(X,D)$ with $X$ irreducible. Moreover, we will show that the CM $\mb{Q}$-line bundle is proportional to the polarization line bundle for taking VGIT. 

\begin{prop}
    We have that $$-f_{*}(-K_{\mts{X}/U}-c\mts{D})^3=3(1-2c)^2(4+2c)\left(\eta+\frac{5c}{4+2c}\xi\right).$$
\end{prop}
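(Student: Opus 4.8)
The plan is to compute the triple intersection number $-f_*(-K_{\mts{X}/U} - c\mts{D})^3$ directly by expressing $-K_{\mts{X}/U}$ and $\mts{D}$ in terms of the natural divisor classes on the ambient space, then pushing forward to $U$ (equivalently, to $\mb{P}\mtc{E}$) using the known structure of the projective bundle. First I would recall that, by adjunction for the complete intersection $i:\mts{X}\hookrightarrow(\mb{P}^1\times\mb{P}^2)\times U$ of relative type $(\mtc{O}(1,2),\mtc{O}(2,2))$, the relative anticanonical class is $-K_{\mts{X}/U} = i^*\big(p_1^*\mtc{O}_{\mb{P}^1\times\mb{P}^2}(2,2) - (\text{a multiple of } \xi)\big)$, where the correction term comes from the normal bundle of $\mts{X}$ twisted by the tautological classes $\eta,\xi$ of $\mb{P}\mtc{E}$; recall here that the second equation $g$ of relative type $(2,2)$ is cut out by a section of $p_1^*\mtc{O}(2,2)\otimes\xi$, while the first equation $f$ of type $(1,2)$ is cut out by a section of $p_1^*\mtc{O}(1,2)\otimes\eta$. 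Since $\mts{D}\in|-2K_{\mts{X}}|$ relatively, we have $\mts{D} \sim -2K_{\mts{X}/U} + f^*(\text{class on }U)$; because we only care about the pushforward to $U$ which is two-dimensional with basis $\eta,\xi$, I would track all classes modulo $f^*(\Pic U)$ where harmless and keep careful track where not.

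The key computational step is then to expand $(-K_{\mts{X}/U} - c\mts{D})^3 = (1-2c)^3(-K_{\mts{X}/U})^3 + (\text{terms involving } f^*\Pic U)$ — in fact, since $-K_{\mts{X}/U} - c\mts{D} = (1-2c)(-K_{\mts{X}/U}) + cf^*(\ell)$ for some $\ell\in\Pic U$, the cube expands as $(1-2c)^3(-K_{\mts{X}/U})^3 + 3(1-2c)^2 c\,(-K_{\mts{X}/U})^2 f^*\ell$, the remaining terms dying after pushforward since $f^*\ell\cdot f^*\ell' = f^*(\ell\ell') = 0$ for dimension reasons on the fibers (the fibers are surfaces). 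Thus $-f_*(-K_{\mts{X}/U}-c\mts{D})^3 = (1-2c)^3 \big(-f_*(-K_{\mts{X}/U})^3\big) - 3(1-2c)^2 c\, f_*\big((-K_{\mts{X}/U})^2 f^*\ell\big)$, and $f_*((-K_{\mts{X}/U})^2 f^*\ell) = \big(f_*(-K_{\mts{X}/U})^2\big)\cdot\ell = (-2K_{\Sigma})^2\cdot\ell = 20\,\ell$ by the projection formula, where $20 = (-2K_{\Sigma})^2$ on the quintic del Pezzo surface. So the whole computation reduces to (a) identifying the class $\ell$ explicitly — this is where the contribution $\mts{D}\sim -2K_{\mts{X}/U} + f^*\ell$ must be pinned down from the fact that $\mts{D}$ is cut on $\mts{X}$ by a section of $p_1^*\mtc{O}(2,2)\otimes\xi$ while $-2K_{\mts{X}/U}$ corresponds to $p_1^*\mtc{O}(4,4)$ minus normal-bundle corrections, giving $\ell$ as an explicit combination of $\eta$ and $\xi$ — and (b) computing the single number $-f_*(-K_{\mts{X}/U})^3$, which is an intersection-theoretic calculation on $(\mb{P}^1\times\mb{P}^2)\times U$ using $i_*[\mts{X}] = [p_1^*\mtc{O}(1,2)\otimes\eta]\cdot[p_1^*\mtc{O}(2,2)\otimes\xi]$ and the relation defining $\xi$ on $\mb{P}\mtc{E}$ together with $c_1(\mtc{E}) = 2\eta$, $c(\mtc{E}) = 18 - 2e^{-\eta}$ from the earlier Chern character computation.

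Concretely, I would let $H_1 = p_1^*\mtc{O}_{\mb{P}^1}(1)$, $H_2 = p_1^*\mtc{O}_{\mb{P}^2}(1)$, so that $i_*1 = (H_1 + 2H_2 + \eta)(2H_1 + 2H_2 + \xi)$ as a class on the ambient $(\mb{P}^1\times\mb{P}^2)\times U$ of relative dimension $3$ over $U$, and $-K_{\mts{X}/U} = (2H_1 + 2H_2)|_{\mts{X}} - (\text{the sum of the two twists } \eta + \xi)|_{\mts{X}} + \ldots$; more precisely by relative adjunction $K_{\mts{X}/U} = \big(K_{(\mb{P}^1\times\mb{P}^2)/U} + N^\vee\text{-corrections}\big)|_{\mts{X}}$ where $K_{\mb{P}^1\times\mb{P}^2} = -2H_1 - 3H_2$ and the normal bundle of $\mts{X}$ contributes $(H_1 + 2H_2 + \eta) + (2H_1 + 2H_2 + \xi)$, yielding $-K_{\mts{X}/U} = (2H_1 + 3H_2 + H_1 + 2H_2 + \eta + 2H_1 + 2H_2 + \xi)|_{\mts{X}} \cdot(-1)$... wait, I would carefully redo this: $-K_{\mts{X}/U} = \big(-K_{\mb{P}^1\times\mb{P}^2} - c_1(N_{\mts{X}})\big)|_{\mts{X}}$, and since $-K_{\mb{P}^1\times\mb{P}^2} = 2H_1 + 3H_2$ while $c_1(N_{\mts{X}}) = (H_1 + 2H_2 + \eta) + (2H_1+2H_2+\xi) = 3H_1 + 4H_2 + \eta + \xi$, we get $-K_{\mts{X}/U} = (-H_1 - H_2 - \eta - \xi)|_{\mts{X}}$ — and since $-2K_{\Sigma}$ restricts from $\mtc{O}(2,2)$ we should instead have $-K_{\mts{X}/U}|_{\text{fiber}} = -K_\Sigma$, matching $(H_1 + H_2)|_\Sigma = \mtc{O}_\Sigma(1,1) = -K_\Sigma$, so the correct sign gives $-K_{\mts{X}/U} = (H_1 + H_2)|_{\mts{X}} + f^*(\text{something in }\eta,\xi)$; I would fix the precise $f^*$ part by the adjunction bookkeeping above, then integrate the cube over the fibers using $\int_{\mb{P}^1\times\mb{P}^2} H_1 H_2^2 = 1$ together with $\pi_* \xi^{k} = s_{k-15}(\mtc{E})$ (Segre classes) on $\mb{P}\mtc{E}$, which reduces everything to the already-known $c(\mtc{E})$. \textbf{The main obstacle} I anticipate is precisely the correct normalization of the tautological twists: whether $g$ is a section of $p_1^*\mtc{O}(2,2)\otimes\xi$ or of $p_1^*\mtc{O}(2,2)\otimes\xi^{\vee}$ (and similarly for $\eta$ and the surface equation $f$), since a sign error here propagates into the coefficient of $\xi$ in the final answer; this must be pinned down by comparing against the definition $\mtc{E} = \pi_{2*}(\mtc{O}_{\mts{Q}}\otimes\pi_1^*\mtc{O}(2,2))$ and the surjection $H^0(\mb{P}^1\times\mb{P}^2,\mtc{O}(2,2))\otimes\mtc{O}_{\mb{P}^{11}}\twoheadrightarrow\mtc{E}$, which forces the evaluation section of $(2,2)$-forms on $\mts{X}$ to live in $p_1^*\mtc{O}(2,2)\otimes\pi^*(\text{dual of the tautological sub})$, i.e. $\otimes\,\xi$ with the Grothendieck convention $\xi = \mtc{O}_{\mb{P}\mtc{E}}(1)$ used in the excerpt. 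Once the twists are fixed, the rest is the bookkeeping sketched above, and the factor $(1-2c)^2(4+2c)$ together with the slope $\frac{5c}{4+2c}$ should fall out, confirming proportionality of the CM line bundle to the VGIT polarization $\eta + t(c)\xi$.
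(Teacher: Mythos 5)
Your overall strategy --- adjunction to write $-K_{\mts{X}/U}$ and $\mts{D}$ in terms of $H_1,H_2,\eta,\xi$, then expand the cube and push forward along $f$ --- is exactly the paper's, but two concrete errors in the execution would derail the computation. First, the adjunction is applied to the wrong subvariety: $\mts{X}$ is the universal \emph{surface}, a hypersurface in $(\mb{P}^1\times\mb{P}^2)\times U$ of class $H_1+2H_2+\eta$, so only that one class enters its normal bundle; the sum $(H_1+2H_2+\eta)+(2H_1+2H_2+\xi)$ you use for $c_1(N_{\mts{X}})$ is the normal bundle of the universal \emph{curve} $\mts{D}$, the codimension-two complete intersection. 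The correct bookkeeping gives $-K_{\mts{X}/U}=(H_1+H_2)|_{\mts{X}}-f^{*}\eta$ and $\mts{D}=(2H_1+2H_2)|_{\mts{X}}+f^{*}\xi$, hence $\mts{D}=-2K_{\mts{X}/U}+f^{*}(2\eta+\xi)$, which pins down your undetermined class as $\ell=-(2\eta+\xi)$. You noticed the restriction to a fiber came out with the wrong sign, but diagnosed it as a twist convention rather than as this mix-up, and deferred the $f^{*}$-part ``to be fixed by the bookkeeping'' --- yet that $f^{*}$-part is the entire content of the proposition, since the answer is a divisor class on $U$ and everything not involving $\eta$ or $\xi$ pushes forward to a multiple of $[U]$ or to zero.

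Second, in the surviving cross term you evaluate $f_{*}\bigl((-K_{\mts{X}/U})^{2}\bigr)$ as $(-2K_{\Sigma})^{2}=20$; but $-K_{\mts{X}/U}$ restricts to $-K_{\Sigma}$ on a fiber, so the correct degree is $(-K_{\Sigma})^{2}=5$, and the spurious factor of $4$ would corrupt both coefficients in the final answer. With these two points repaired your decomposition does close up: $f_{*}(-K_{\mts{X}/U})^{3}=3\eta-3\cdot5\,\eta=-12\eta$, so $-f_{*}(-K_{\mts{X}/U}-c\mts{D})^{3}=12(1-2c)^{3}\eta+15c(1-2c)^{2}(2\eta+\xi)=3(1-2c)^{2}\bigl((4+2c)\eta+5c\xi\bigr)$, as claimed. (Also, the appeal to Segre classes $\pi_{*}\xi^{k}$ and to $\che(\mtc{E})$ is unnecessary: only the first power of $\xi$ survives the fiberwise pushforward, so no Chern data of $\mtc{E}$ enters; and the vanishing of the remaining terms is because $f_{*}$ kills classes of fiber codimension $\le 1$, not because $f^{*}\ell\cdot f^{*}\ell'=0$.)
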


\begin{proof}
    It follows from our construction that $$\mtc{O}_{\mb{P}^1\times\mb{P}^2\times U}(\mts{X})=p_1^{*}\mtc{O}_{\mb{P}^1\times\mb{P}^2}(1,2)\otimes p_2^{*}\pi^{*}\mtc{O}_{\mb{P}^{11}}(1),$$ $$\mtc{O}_{\mts{X}}(\mts{D})=p_1^{*}\mtc{O}_{\mb{P}^1\times\mb{P}^2}(2,2)|_{\mts{X}}\otimes p_1^{*}\mtc{O}_{U}(1)|_{\mts{X}}.$$ By the adjunction formula, we have that 
    \begin{equation}\nonumber
       \begin{split}
            K_{\mts{X}}&=(K_{\mb{P}^1\times \mb{P}^2\times U}+\mts{X})|_{\mts{X}}\\
            &=p_1^{*}\mtc{O}_{\mb{P}^1\times \mb{P}^2}(-1,-1)|_{\mts{X}}\otimes p_2^{*}(K_{U}+\pi^{*}\mtc{O}_{\mb{P}^{11}}(1))|_{\mts{X}},
        \end{split} 
    \end{equation}
    and thus $$K_{\mts{X}/U}=p_1^{*}\mtc{O}_{\mb{P}^1\times \mb{P}^2}(-1,-1)|_{\mts{X}}\otimes f^{*}\pi^{*}\mtc{O}_{\mb{P}^{11}}(1)|_{\mts{X}}.$$ Adding the class of $\mts{D}$, we obtain that $$-K_{\mts{X}/U}-c\mts{D}=(1-2c)p_1^{*}\mtc{O}_{\mb{P}^1\times \mb{P}^2}(1,1)|_{\mts{X}}\otimes f^{*}(\pi^{*}\mtc{O}_{\mb{P}^{11}}(-1)-c\mtc{O}_{U}(1)).$$  For simplicity, we denote $p_1^{*}\mtc{O}_{\mb{P}^1\times \mb{P}^2}(a,b)|_{\mts{X}}$ by $\mtc{O}_{\mts{X}}(a,b)$. It is easy to see that $$(\mtc{O}_{\mb{P}^1\times\mb{P}^2}(1,1))^3=3,\quad \textup{and}\quad \mtc{O}_{\mts{X}}(1,1)^2=5.$$ Now we compute that 
    \begin{equation}\nonumber
        \begin{split}
            -f_{*}(-K_{\mts{X}/U}-c\mts{D})^3&=-f_{*}((1-2c)^3\mtc{O}_{\mts{X}}(1,1)^3\\
            &\quad\quad\quad  -3(1-2c)^3\mtc{O}_{\mts{X}}(1,1)^2f^{*}(\pi^{*}\mtc{O}_{\mb{P}^{11}}(-1)-c\mtc{O}_{U}(1))\\
            &\quad\quad\quad  +3(1-2c)\mtc{O}_{\mts{X}(1,1)}f^{*}(\pi^{*}\mtc{O}_{\mb{P}^{11}}(1)+c\mtc{O}_{U}(1))^2\\
            &\quad\quad\quad -f^{*}(\pi^{*}\mtc{O}_{\mb{P}^{11}}(1)+c\mtc{O}_{U}(1))^3)\\
            &=-3(1-2c)^3\pi^{*}\mtc{O}_{\mb{P}^{11}}(1)+3(1-2c)^2\cdot 5(\pi^{*}\mtc{O}_{\mb{P}^{11}}(1)+c\mtc{O}_{U}(1))\\
            &=3(1-2c)^2\left(5(\eta+c\xi)-(1-2c)\eta\right)\\
            &=3(1-2c)^2(4+2c)\left(\eta+\frac{5c}{4+2c}\xi\right).
        \end{split}
    \end{equation}
   
\end{proof}

\begin{corollary}
    For any rational number $0<c<\frac{11}{52}$, we set $$t=t(c):=\frac{5c}{4+2c}.$$ Then on the open subset $U$ of $\mb{P}\mtc{E}$, the CM $\mb{Q}$-line bundle $\Lambda_{\CM,f,c}$ and the polarization $\mtc{L}_t:=\eta+t\xi$ are proportional.
\end{corollary}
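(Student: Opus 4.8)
The plan is to read the statement off the preceding Proposition, once we recall the shape of the log CM $\mathbb{Q}$-line bundle. For a $\mathbb{Q}$-Gorenstein family $f\colon(\mathscr{X},\mathscr{D})\to U$ whose fibres are log Fano pairs of dimension $n$, with $-K_{\mathscr{X}/U}-c\mathscr{D}$ relatively ample, the log CM $\mathbb{Q}$-line bundle is, up to a fixed positive rational constant,
\[
\Lambda_{\CM,f,c}\;=\;-\frac{1}{n+1}\,f_{*}\bigl((-K_{\mathscr{X}/U}-c\mathscr{D})^{\,n+1}\bigr);
\]
this is the standard normalization of the CM line bundle in the log Fano setting (see \cite{zha22}). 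In our situation the fibres $X\subseteq\mathbb{P}^1\times\mathbb{P}^2$ are surfaces, so $n=2$ and the relevant power is the cube. The family $f\colon(\mathscr{X},\mathscr{D})\to U$ from the construction above is $\mathbb{Q}$-Gorenstein (its fibres are complete intersections, hence Gorenstein), and on each fibre $X$ one has $(-K_{\mathscr{X}/U}-c\mathscr{D})|_{X}\sim(1-2c)(-K_{X})$, which is ample for $0<c<\tfrac{1}{2}$; so the formula applies over $U$.

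I would then substitute the computation of the preceding Proposition, namely
\[
-f_{*}\bigl((-K_{\mathscr{X}/U}-c\mathscr{D})^{3}\bigr)\;=\;3(1-2c)^{2}(4+2c)\Bigl(\eta+\frac{5c}{4+2c}\,\xi\Bigr)\;=\;3(1-2c)^{2}(4+2c)\,\mathcal{L}_{t(c)},
\]
with $t(c)=\tfrac{5c}{4+2c}$. Dividing by $n+1=3$ gives, on $U$ and up to the same fixed positive constant,
\[
\Lambda_{\CM,f,c}\;=\;(1-2c)^{2}(4+2c)\,\mathcal{L}_{t(c)},
\]
so the two $\mathbb{Q}$-line bundles are rational multiples of one another.

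Finally I would check that the proportionality constant is positive, which is the point that makes this identification useful for comparing the K-moduli polarization with the VGIT polarization: for every $c$ with $0<c<\tfrac{11}{52}$ (indeed for all $0<c<\tfrac{1}{2}$) we have $(1-2c)^{2}>0$ and $4+2c>0$, so $3(1-2c)^{2}(4+2c)>0$. Hence $\Lambda_{\CM,f,c}$ and $\mathcal{L}_{t(c)}$ are proportional with a strictly positive ratio, as claimed. There is no real obstacle here: all the content sits in the intersection-number computation of the preceding Proposition, and the only thing requiring care is tracking the sign and normalization conventions for the CM line bundle so that the resulting scalar genuinely comes out positive.
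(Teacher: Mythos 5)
Your proposal is correct and matches the paper's intent: the corollary is stated without proof precisely because it follows immediately from the preceding Proposition once one recalls that for a family of log Fano pairs the CM $\mathbb{Q}$-line bundle is, up to a positive constant, $-\frac{1}{n+1}f_{*}\bigl((-K_{\mathscr{X}/U}-c\mathscr{D})^{n+1}\bigr)$ with $n=2$ here. Your additional check that the scalar $3(1-2c)^{2}(4+2c)$ is positive on the stated range is the only point requiring care, and you handle it correctly.
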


\subsection{VGIT wall-crossings and K-moduli}

\begin{theorem}\label{13}
    The VGIT walls are $$t_0=0,\quad t_1=\frac{1}{14},\quad t_2=\frac{1}{8},\quad t_3=\frac{1}{6},\quad t_4=\frac{1}{5},\quad t_5=\frac{1}{4},\quad t_6=\frac{1}{3},\quad t_7=\frac{1}{2},$$ which bijectively correspond via the relation $\displaystyle t(c)=\frac{5c}{4+2c}$ to the K-moduli walls $$c_0=0,\quad c_1=\frac{1}{17},\quad c_2=\frac{2}{19},\quad c_3=\frac{1}{7},\quad c_4=\frac{4}{23},\quad c_5=\frac{2}{9},\quad c_6=\frac{4}{13},\quad c_7=\frac{1}{2}.$$ Among the walls $t_i\in(0,1/2)$, only the one $t_1=\frac{1}{14}$ is a divisorial contraction, and the remaining 5 walls are flips.
\end{theorem}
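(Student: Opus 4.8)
The plan is to determine the walls by combining the explicit destabilization computations of Section 3 with a dimension-count showing that the semistable loci genuinely change across exactly the listed values. The candidate walls $t_1=\tfrac{1}{14}$, $t_2=\tfrac18$, $t_3=\tfrac16$, $t_4=\tfrac15$, $t_5=\tfrac14$, $t_6=\tfrac13$ have already emerged as stability thresholds in the propositions and lemmas of Subsections 3.1--3.3: each such $t_i$ is the value at which some boundary stratum (surfaces with a prescribed singularity type, or pairs $(X,D)$ with a prescribed incidence/multiplicity condition at the singular point) transitions from $t$-unstable to $t$-semistable. First I would assemble these into a single statement: for $t$ in each open interval $(t_i,t_{i+1})$ the set of GIT$_t$-semistable pairs is constant, which follows from the general VGIT principle (Lemma \ref{8} and the Thaddeus-type result) that walls can only occur at rational numbers where the Hilbert--Mumford function $\mu^t$ of some pair and some 1-PS changes sign; since all relevant $\mu(X;\lambda)$ and $\mu(D;\lambda)$ are integers bounded in a fixed range (the weights are bounded because $f$ has bidegree $(1,2)$ and $g$ bidegree $(2,2)$), there are only finitely many such critical slopes, and the computations of Section 3 show they are exactly the $t_i$.

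Next I would verify that each $t_i$ is a genuine wall, i.e. that $\mb{P}\mtc{E}^{\GIT}(t_i-\varepsilon)\subsetneq \mb{P}\mtc{E}^{\GIT}(t_i+\varepsilon)$, by exhibiting for each $i$ a pair $(X,D)$ that is strictly $t_i$-semistable (the explicit model pairs $(X,D_0),(X,D_1),\dots$ constructed in Subsection 3.3 and Lemma \ref{5}, Propositions \ref{9}, \ref{10}) together with the two opposing 1-PS's showing it sits on the wall. The correspondence $c_i\leftrightarrow t_i$ is then pure algebra: invert $t(c)=\tfrac{5c}{4+2c}$, i.e. $c=\tfrac{4t}{5-2t}$, and check that plugging in $t_1,\dots,t_6$ yields $\tfrac{1}{17},\tfrac{2}{19},\tfrac17,\tfrac{4}{23},\tfrac29,\tfrac{4}{13}$ respectively (and the endpoints $0\leftrightarrow0$, $\tfrac12\leftrightarrow\tfrac12$); by the Corollary identifying $\Lambda_{\CM,f,c}$ with $\mtc{L}_{t(c)}$ on $U$, and the fact (from \cite{zha22} / Theorem \ref{12}) that K-(semi)stability of a pair in the relevant $c$-range coincides with GIT$_{t(c)}$-semistability, the K-moduli walls are forced to be the images of the $t_i$.

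Finally, to classify each wall in $(0,1/2)$ as a divisorial contraction or a flip I would analyze the exceptional locus $E_+$ of $\ove{M}^{\GIT}(t_i+\varepsilon)\to\ove{M}^{\GIT}(t_i)$ and, symmetrically, $E_-$ on the other side, computing their dimensions. For $t_1=\tfrac1{14}$ the proof of Proposition \ref{9} already identifies $E_+$ with the quotient of the weighted projective space $\mb{P}(1^2,2^5,3^5,4^3)$ — a divisor in the eight-dimensional moduli space — while the locus being contracted on the $(t_1-\varepsilon)$-side (surfaces with an $A_1$ point, $D\ni p$ with $\mult_pD=1$, modulo automorphisms) has smaller dimension, giving a divisorial contraction. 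For $t_2,\dots,t_6$ I would carry out the analogous count: the center of the wall is the GIT quotient of the space of model pairs $(X,D_j)$ by its stabilizer, and one checks that both $E_+$ and $E_-$ have codimension $\geq 2$, so Thaddeus's theorem gives a flip. The main obstacle is this last step — the dimension bookkeeping for the five flipping walls, since each requires identifying the stabilizer of the toric surface $X=\{uyz+vx^2=0\}$ (a two-dimensional torus extended by a finite group) or of the $A_2,A_3$ surfaces, computing the dimension of the relevant linear system of boundary divisors fixed by the destabilizing 1-PS, and confirming that neither side contracts a divisor; getting the toric case right is delicate because $X$ admits a whole torus of $\mb{G}_m$-actions and the wall locus is cut out by several independent vanishing conditions on the $a_{\bullet}$.
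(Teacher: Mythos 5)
Your proposal follows essentially the same route as the paper: the walls are read off from the stability-threshold computations of Section 3 (together with the classification of singular surfaces in Lemma \ref{11}), the correspondence $c_i\leftrightarrow t_i$ is the algebraic inversion $c=\frac{4t}{5-2t}$ of the CM-line-bundle proportionality, and the divisorial-contraction-versus-flip dichotomy is settled by computing dimensions of the exceptional loci on both sides of each wall (with $E_+\simeq\mb{P}(1^2,2^5,3^5,4^3)/G$ identified in Proposition \ref{9} for the first wall). Your candid remark that the dimension bookkeeping for the five flipping walls is the delicate step is fair, since the paper itself only sketches this part.
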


\begin{proof}
    By the local structure of VGIT, new pairs appear in the GIT moduli stacks $\mtc{M}^{\GIT}(t)$ for each wall $t$. By Lemma \ref{11}, Proposition \ref{12} and discussions of stability thresholds in Section 3 about all the pairs that appear in $\mtc{M}^{\GIT}(t)$ for some $0<t<1/2$, we conclude that $t_0,...,t_7$ listed above are all the walls for the VGIT quotient $\mb{P}\mtc{E}\sslash_t\SL(2)\times \SL(3)$.

    Crossing the first wall, we replace $(\Sigma,D_0)$ by $(X,D)$, where $D_0\in |-2K_{\Sigma}|$ is the curve with a component of multiplicity $4$, $X$ is a quintic del Pezzo surface with exactly one $A_1$-singularity, and $D\in |2K_X|$ does not pass through the singularity of $X$. Counting dimensions, we see that the wall-crossing map $$\ove{M}^{\GIT}\left(\frac{1}{14}+\varepsilon\right)\longrightarrow \ove{M}^{\GIT}\left(\frac{1}{14}\right)$$ is a divisorial contraction. Similarly, we can prove that the other walls are all flips by computing the dimension of the exceptional locus of $\ove{M}^{\GIT}\left(t_i+\varepsilon\right)\rightarrow \ove{M}^{\GIT}\left(t_i\right)$.
\end{proof}

Recall that $U\subseteq \mb{P}\mtc{E}$ is the open subset consisting of complete intersections $(X,D)$. Set $U^K(c)$ to be the subset of $U$ parameterizing pairs $(X,D)$ such that $(X,cD)$ is K-semistable, and $U^{\GIT}(c)$ to be the subset of $U$ parameterizing $\GIT_{t(c)}$-semistable pairs $(X,D)$.

\begin{theorem}\label{12}
    For any $c\in(0,1/17)$, we have $U^K(c)= U^{\GIT}(c)$. Moreover, we have a finite map $$\ove{M}^{\GIT}(c)\longrightarrow \ove{M}^{K}(c)$$ of degree $5$. In fact, we have that $$\ove{M}^{\GIT}(c)\simeq |-2K_{\Sigma}|\sslash \mtf{S}_4,\quad \textup{and}\quad \ove{M}^{K}(c)\simeq |-2K_{\Sigma}|\sslash \mtf{S}_5.$$ 
\end{theorem}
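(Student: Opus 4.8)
The plan is to first determine the GIT‑semistable locus on $\mb{P}\mtc{E}$ for $c\in(0,\tfrac1{17})$, and then transport the picture to the $K$‑moduli side using the proportionality between the CM $\mb{Q}$‑line bundle and the polarization $\mtc{L}_t$ established in the previous subsection. Throughout put $G=\SL(2)\times\SL(3)$ and $t=t(c)=\tfrac{5c}{4+2c}$, so that $c<\tfrac1{17}$ is equivalent to $t<\tfrac1{14}$. By Corollary \ref{15}, Proposition \ref{2} and the stability‑threshold computations of Section 3 for surfaces with two $A_1$‑, an $A_2$‑ or an $A_3$‑singularity, every $t$‑semistable pair $(X,D)$ with $t<\tfrac1{14}$ has $X$ smooth. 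Since $\dim G=\dim\mb{P}^{11}=11$ and a smooth quintic del Pezzo surface has finite stabilizer, the locus of $[X]\in\mb{P}^{11}$ with $X\simeq\Sigma$ is a single dense open $G$‑orbit $\mathcal{O}\cong G/H$, where, as recalled in the introduction, $H=\Stab_G([\Sigma])\cong\mtf{S}_4$ is the index‑$5$ subgroup of $\Aut(\Sigma)=\mtf{S}_5$ consisting of the automorphisms preserving the embedding $\Sigma\hookrightarrow\mb{P}^1\times\mb{P}^2$. Hence $U^{\GIT}(c)\subseteq\mb{P}\mtc{E}|_{\mathcal{O}}$, and $\mb{P}\mtc{E}|_{\mathcal{O}}$ is the associated bundle $G\times_{H}|-2K_{\Sigma}|$, the fibre over $[\Sigma]$ being $\mb{P}H^0(\Sigma,\mtc{O}_{\Sigma}(2,2))=|-2K_{\Sigma}|$.

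Next I would compute $\ove{M}^{\GIT}(c)$. Over a homogeneous base the GIT quotient is taken fibrewise: writing $U^{\GIT}(c)=G\times_{H}V$ for an $H$‑invariant open $V\subseteq|-2K_{\Sigma}|$, one has $U^{\GIT}(c)\sslash_{\mtc{L}_t}G= V\sslash_{\mtc{L}_t|_V}H=V/\mtf{S}_4$ since $H$ is finite; moreover every $G$‑orbit in $\mb{P}\mtc{E}|_{\mathcal{O}}$ is closed there (it is $11$‑dimensional and maps onto $\mathcal{O}$ with finite fibre $HD$), so this is a geometric quotient. As $\mb{P}\mtc{E}$ is projective and $\mtc{L}_t=\eta+t\xi$ is ample for $0<t<\tfrac12$ (Proposition on the nef cone), $\ove{M}^{\GIT}(c)=V/\mtf{S}_4$ is proper; being at the same time an open subvariety of the irreducible projective variety $|-2K_{\Sigma}|/\mtf{S}_4$, it is all of it, forcing $V=|-2K_{\Sigma}|$. (Non‑emptiness of $U^{\GIT}(c)$ — a general $(\Sigma,D)$ is $t$‑stable — is standard for complete intersections, cf. \cite{Ben14}.) Hence $U^{\GIT}(c)=\mb{P}\mtc{E}|_{\mathcal{O}}$ and $\ove{M}^{\GIT}(c)\simeq|-2K_{\Sigma}|\sslash\mtf{S}_4$.

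For the $K$‑side, the Corollary of the previous subsection gives that the CM $\mb{Q}$‑line bundle of $(\mts{X},c\mts{D})\to U$ is proportional to $\mtc{L}_t$, so for any $1$‑PS $\lambda$ of $G$ the generalized Futaki invariant of the test configuration of $(X,cD)$ induced by $\lambda$ is a positive multiple of the Hilbert–Mumford weight $\mu^{t}(X,D;\lambda)$. Thus $K$‑semistability of $(X,cD)$ implies $t$‑semistability of $(X,D)$, i.e. $U^{K}(c)\subseteq U^{\GIT}(c)=\mb{P}\mtc{E}|_{\mathcal{O}}$; combined with the fact that $\mb{P}\mtc{E}$ is a complete parameter space for this $K$‑moduli problem (\cite{zha22}), every pair appearing in $\ove{M}^{K}(c)$ has surface isomorphic to $\Sigma$. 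Since $\Sigma$ is rigid with automorphism group $\mtf{S}_5$, the $K$‑moduli stack is then $[W/\mtf{S}_5]$, where $W\subseteq|-2K_{\Sigma}|$ is the $\mtf{S}_5$‑invariant open locus of $D$ with $(\Sigma,cD)$ $K$‑semistable, so that $\ove{M}^{K}(c)=W/\mtf{S}_5$. Properness of $\ove{M}^{K}(c)$ (existence of $K$‑moduli, \cite{zha22}) together with the same open‑and‑closed argument as in Step 2 forces $W=|-2K_{\Sigma}|$. Therefore $U^{K}(c)=\mb{P}\mtc{E}|_{\mathcal{O}}=U^{\GIT}(c)$ and $\ove{M}^{K}(c)\simeq|-2K_{\Sigma}|\sslash\mtf{S}_5$.

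Finally, forgetting the embedding defines a morphism from the GIT stack $[\mb{P}\mtc{E}|_{\mathcal{O}}/G]\cong[|-2K_{\Sigma}|/\mtf{S}_4]$ to the $K$‑moduli stack $[|-2K_{\Sigma}|/\mtf{S}_5]$, which is exactly the map induced by $\mtf{S}_4\hookrightarrow\mtf{S}_5$ acting through $\Aut(\Sigma)$; this is finite of degree $[\mtf{S}_5:\mtf{S}_4]=5$, and passing to good moduli spaces yields the asserted finite degree‑$5$ morphism $\ove{M}^{\GIT}(c)=|-2K_{\Sigma}|\sslash\mtf{S}_4\to|-2K_{\Sigma}|\sslash\mtf{S}_5=\ove{M}^{K}(c)$. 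I expect the main obstacle to be the $K$‑side: one must show that \emph{every} pair $(\Sigma,D)$ — including very degenerate, non‑reduced $D$ such as those containing a component of multiplicity $4$ — is $K$‑semistable for $c<\tfrac1{17}$, and that no singular del Pezzo occurs among $K$‑semistable pairs in this range. Both are treated here softly (the first by properness of $\ove{M}^{K}(c)$, the second by the Futaki–Hilbert–Mumford comparison feeding off Section 3), so the real work is in justifying that $\mb{P}\mtc{E}$ is complete for the $K$‑moduli problem and in keeping straight the automorphism bookkeeping — the embedded family records only $\mtf{S}_4\subset\mtf{S}_5$, and that discrepancy is precisely the source of the degree $5$.
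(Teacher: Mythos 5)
Your GIT half is fine and is in fact a different, more self-contained route than the paper's: you show directly that for $t<\tfrac{1}{14}$ every $t$-semistable pair lies over the smooth orbit $\mathcal{O}\subseteq\mb{P}^{11}$ (this is exactly Proposition \ref{6}, Proposition \ref{7}, Corollary \ref{15}, Proposition \ref{2} and the subsequent threshold computations), identify $U^{\GIT}(c)$ with $G\times_{\mtf{S}_4}V$, note all orbits there are closed with finite stabilizers, and then use properness of the GIT quotient plus the open-and-closed trick inside the irreducible $|-2K_{\Sigma}|/\mtf{S}_4$ to force $V=|-2K_{\Sigma}|$. That argument is sound (modulo the non-emptiness you flag) and yields $\ove{M}^{\GIT}(c)\simeq|-2K_{\Sigma}|\sslash\mtf{S}_4$ without any input from the K-side, whereas the paper obtains this only after establishing the comparison with $\ove{M}^K(c)$.

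The genuine gap is in your K-side step, namely the assertion that K-semistability of $(X,cD)$ implies $t(c)$-GIT-semistability because ``the generalized Futaki invariant of the test configuration induced by a 1-PS is a positive multiple of $\mu^{t}(X,D;\lambda)$.'' This Paul--Tian type identity is not available for free here: the CM $\mb{Q}$-line bundle is constructed only on the big open subset $U$ of complete intersections with irreducible surface, while a destabilizing 1-PS can have its limit outside $U$ (reducible surfaces, or $g$ vanishing on a component of $X$), where the induced ``central fiber'' is not a pair of the kind for which $\mathrm{DF}=$ HM-weight is established; making this precise (extension of the CM line bundle across $\mb{P}\mtc{E}\setminus U$, normalization of the induced test configuration, and the inequality between their Futaki invariants) is exactly the hard content you defer, together with the completeness of $\mb{P}\mtc{E}$ for the K-moduli problem, which you delegate to \cite{zha22}. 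The paper avoids this direction entirely: it imports from \cite{zha22} that $U^K(c)\simeq\pi^{-1}(V)$ with $V$ the smooth locus and that $\ove{M}^K(c)\simeq|-2K_{\Sigma}|\sslash\mtf{S}_5$, deduces the \emph{opposite} inclusion $U^{\GIT}(c)\subseteq U^K(c)$ from the explicit instability statements of Section 3, and then gets surjectivity (hence equality of the two loci and the degree-$5$ finite map, since $[\mtf{S}_5:\mtf{S}_4]=5$) from properness and irreducibility of the two moduli spaces. So either supply a precise CM/Futaki comparison theorem covering degenerations out of $U$, or replace that step by the paper's inclusion $U^{\GIT}(c)\subseteq U^K(c)$, which your own Section 3 input already makes immediate.
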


\begin{proof}
    We may assume that $0<c\ll1$. Recall that $U^K(c)\simeq \pi^{-1}(V)$, where $V$ is the open subset of $\mb{P}^{11}$ consisting of all smooth surfaces, and $\pi:\mb{P}\mtc{E}\rightarrow \mb{P}^{11}$ the projection. From Proposition \ref{2}, we get that $U^{\GIT}(c)\subseteq U^K(c)$. As a consequence, one obtains a morphism $$\ove{M}^{\GIT}(c)\longrightarrow \ove{M}^{K}(c).$$ As $\ove{M}^K(c)$ is irreducible and both of the moduli spaces are proper, then this map is surjective. In particular, the c-K-stability and t(c)-GIT-stability are equivalent on $U$ for $0<c<c_1$. 

    Notice that $\SL(2)\times \SL(3)$ acts transitively on the open subset of $\mb{P}^{11}$ consisting of smooth surfaces, and for any point $[X]$ in this locus, the stabilizer of $[X]$ is isomorphic to $\mtf{S}_4$. In fact, viewing $X$ as a pencil of conics $uf_2(x,y,z)+vg_2(x,y,z)=0$ in $\mb{P}^2$, the stabilizer of $X$ consists of $g\in \SL(3)$ fixing the four distinct points $$\{(x:y:z)|f_2(x,y,z)=g_2(x,y,z)=0\}.$$ It follows that $$\ove{M}^{\GIT}(c)\simeq |-2K_{\Sigma}|\sslash \mtf{S}_4.$$ The last statement is exactly \cite[Theorem 1.1]{zha22}. From this, one can also deduce the morphism $\ove{M}^{\GIT}(c)\rightarrow \ove{M}^{K}(c)$ is finite of degree $5$. 
\end{proof}

\section{Hassett-Keel program for curves of genus six}

Hassett-Keel program aims to understand log minimal models of moduli space $\ove{M}_g$ of curves. For any rational number $\alpha\in[0,1]$ such that $K_{\ove{M}_g}+\alpha \Delta$ is big, we can define $$\ove{M}_g(\alpha):=\Proj \bigoplus_{n\geq0}n\left(K_{\ove{M}_g}+\alpha \Delta\right).$$ A natural question is to look for modular interpretation of $\ove{M}_g(\alpha)$. We refer the reader to \cite{HM06} for results on moduli of curves and the survey \cite{FS13} for the Hassett-Keel program. 

In this section, we will prove Theorem \ref{16}. Let us fix some notation first.

\begin{notation}\textup{
Let $U\subseteq \mb{P}\mtc{E}$ be the open subset parameterizing complete intersections $(X,D)$, where $X$ is irreducible, and $V:=\ove{M}_6\setminus(\Delta_1\cup\Delta_2\cup\Delta_3)$ be an open subset in $\ove{M}_6$. Let $0<c<\frac{11}{52}$ be a rational number and $\ove{M}_K(c)$ be the K-moduli space of K-polystable pairs $(X,cD)$ which admits a $\mb{Q}$-Gorenstein smoothable degeneration to $(\Sigma_5,cD_0)$, where $D_0$ is a smooth curve of  class $-2K_{\Sigma_5}$. Let $\phi:U\dashrightarrow \ove{M}_K(c)$ be the natural rational map, $\psi:U\dashrightarrow V$ be given by $(X,D)\mapsto D$, and $\varphi:V\rightarrow \ove{M}^K(c)$ be $C\mapsto (X,C)$, where $X$ is the unique quintic del Pezzo surface containing $C$. Denote by $\ove{\varphi}$ the rational map $\ove{M}_6\dashrightarrow \ove{M}^K(c)$, which is an extension of $\varphi$.}
\end{notation}

\subsection{The first chamber of $\alpha$}

Recall that the K-moduli for $0<c\leq 1/17$ is identified with one of the Hassett-Keel models. 

\begin{theorem} \textup{(ref. \cite[Proposition 4.3]{Mul14})}
Let $0<c\leq \frac{1}{17}$ be a rational number. Then the log canonical model $\ove{M}_6(\alpha)$ is isomorphic to $|-2K_{\Sigma_5}|/\Aut(\Sigma_5)$ for $16/47<\alpha\leq 35/102$, a point for $\alpha=16/47$, and empty for $\alpha<16/47$.
\end{theorem}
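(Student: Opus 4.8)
The plan is to establish the isomorphism $\ove{M}_6(\alpha)\simeq |-2K_{\Sigma_5}|/\Aut(\Sigma_5)$ for $\alpha$ in the stated range by identifying both sides with a single GIT quotient and checking that the natural polarizations match. First I would recall from the work of M\"uller \cite{Mul14} (and the compatible results of Fedorchuk \cite{Fed18}) that for a general genus six curve $C$, the canonical model lies on a unique quintic del Pezzo surface $\Sigma_5$ with $C\in|-2K_{\Sigma_5}|$, and that the GIT quotient $|-2K_{\Sigma_5}|\sslash\Aut(\Sigma_5)$ is a projective variety birational to $\ove{M}_6$, with the boundary divisors $\Delta_1,\Delta_2,\Delta_3$ all contracted. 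The first task is then to compute, on the relevant parameter space, the class of the descended polarization in terms of $K_{\ove{M}_6}+\alpha\Delta$; this is where the explicit value $\alpha=\frac{32-19c}{94-68c}$ comes from via a divisor-class comparison on $\ove{M}_6$ restricted to $V=\ove{M}_6\setminus(\Delta_1\cup\Delta_2\cup\Delta_3)$.

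The key steps, in order, would be: (1) Use the map $\psi:U\dashrightarrow V$ and the computation of the CM $\mb{Q}$-line bundle $\Lambda_{\CM,f,c}$ from Section 3 to express the pullback $\psi^*(K_{\ove{M}_6}+\alpha\Delta)|_V$ as a positive multiple of $\mtc{L}_{t(c)}=\eta+t(c)\xi$ restricted to $U$, using the proportionality corollary already proved. (2) Identify the section ring $\bigoplus_n H^0(\ove{M}_6, n(K_{\ove{M}_6}+\alpha\Delta))$ with the ring of invariant sections of powers of this line bundle on $\mb{P}\mtc{E}$, using that the complement $\ove{M}_6\setminus V$ and $\mb{P}\mtc{E}\setminus U$ have codimension $\geq 2$ (so sections extend) together with normality. (3) Invoke Theorem \ref{12} (the case $0<c\leq 1/17$), which already identifies $\ove{M}^K(c)\simeq |-2K_{\Sigma}|\sslash\mtf{S}_5=|-2K_{\Sigma_5}|/\Aut(\Sigma_5)$ and identifies the VGIT quotient with this space, so that the Proj of the invariant ring is exactly $\ove{M}^K(c)$. (4) Combine (1)--(3) to conclude $\ove{M}_6(\alpha(c))\simeq \ove{M}^K(c)$, and read off the endpoint behavior ($\alpha=16/47$ a point, $\alpha<16/47$ empty) from the known structure of the GIT quotient and the range $c\in(0,1/17]$, which maps to $\alpha\in[16/47,35/102)$ under $\alpha(c)=\frac{32-19c}{94-68c}$.

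The main obstacle I expect is step (2), namely the careful bookkeeping of the divisor class identity $\psi^*(K_{\ove{M}_6}+\alpha\Delta) \sim (\text{const})\cdot\mtc{L}_{t(c)}$ on the open locus, and in particular justifying that no correction terms from the boundary are needed: one must verify that $\psi$ (or rather the induced morphism on an appropriate resolution) does not introduce discrepancies along $\Delta_1,\Delta_2,\Delta_3$ beyond what is accounted for, and that the rational map $U\dashrightarrow\ove{M}_6$ is well-behaved enough (dominant, generically finite of the expected degree) that pullback of divisor classes behaves as claimed. A secondary subtlety is confirming that the graded ring really is finitely generated and its Proj is normal and proper, so that an isomorphism of Proj's follows from the proportionality of polarizations; here I would lean on the properness of $\ove{M}^K(c)$ established in \cite{zha22} and on Zariski's Main Theorem, exactly as in the proof of Proposition \ref{9}, to upgrade a birational morphism between normal proper varieties to an isomorphism.
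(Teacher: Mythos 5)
This statement is not proved in the paper at all: it is imported verbatim from M\"uller's work (cited as \cite[Proposition 4.3]{Mul14}) and then used as the \emph{input} for the base case of the paper's own Theorem \ref{14}. Your proposal essentially runs the argument of Theorem \ref{14} backwards to derive the base case, which creates a circularity problem: the divisor-class identity expressing $K_{\ove{M}_6}+\alpha\delta-s'\ove{\varphi}^{*}\Lambda_{\CM,c}$ as an explicit effective combination of $\ove{\mtc{GP}}_6$ and $\delta_1,\delta_2,\delta_3$ is itself extracted from \cite[Proposition 4.3]{Mul14} in the paper's proof, so you cannot use that machinery to establish the statement it depends on.

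Beyond the circularity, there is a concrete false step in your step (2). You claim that $\ove{M}_6\setminus V$ has codimension $\geq 2$ so that sections extend; but $V=\ove{M}_6\setminus(\Delta_1\cup\Delta_2\cup\Delta_3)$ omits three boundary \emph{divisors}, so the complement is codimension one and sections of $n(K_{\ove{M}_6}+\alpha\Delta)$ do not automatically restrict isomorphically to $V$. This is exactly the hard part of any Hassett--Keel computation: one must know the classes of $\lambda$, $\delta_0,\dots,\delta_3$ and $\ove{\mtc{GP}}_6$ on all of $\ove{M}_6$, verify that the difference $K_{\ove{M}_6}+\alpha\delta-s\ove{\varphi}^{*}\Lambda$ is an \emph{effective} sum of $\ove{\varphi}$-exceptional divisors for $\alpha$ in the stated range (checking the signs of the coefficients $35/2-51\alpha$, $9-11\alpha$, etc.), and separately establish the endpoint behavior at $\alpha=16/47$ (where $K_{\ove{M}_6}+\alpha\delta$ hits the boundary of the effective cone) and the emptiness below it. The computation of $\psi^{*}\lambda$ and $\psi^{*}\delta_0$ on $U$ sees none of $\delta_1,\delta_2,\delta_3$, so it cannot supply these coefficients. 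The honest resolution is that this theorem must simply be cited from \cite{Mul14}, as the paper does, or else reproved by the full divisor-class analysis on $\ove{M}_6$ carried out there.
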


\begin{prop}\label{3}
    Neither one of $\Delta_1,\Delta_2,\Delta_3$ is contracted under the natural birational map $\ove{M}_6\dashrightarrow |-2K_{\Sigma_5}|/\Aut(\Sigma_5)$ to the point representing the curve $4L_1+2L_2+2E_1+2E_2$. 
\end{prop}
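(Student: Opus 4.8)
The plan is to show that each boundary divisor $\Delta_i$ ($i=1,2,3$) dominates the one-point target $|-2K_{\Sigma_5}|/\Aut(\Sigma_5)$ only trivially, i.e. that a general point of $\Delta_i$ is \emph{not} sent to the special point $[4L_1+2L_2+2E_1+2E_2]$, which by the very definition of a contraction is equivalent to $\Delta_i$ not being contracted. Since the model $\ove{M}_6(\alpha)$ for $16/47<\alpha\le 35/102$ is literally a single point, ``not contracted'' here must be interpreted one chamber earlier: the natural birational map in question is the composite $\ove{M}_6\dashrightarrow \ove{M}^K(c)\simeq |-2K_{\Sigma_5}|\sslash\mtf{S}_5$, and the claim is that the closure of the image of each $\Delta_i$ under $\ove{\varphi}$ is not the single point corresponding to the maximally degenerate curve $4L_1+2L_2+2E_1+2E_2$. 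Concretely I would argue at the level of the rational map $\psi^{-1}$ followed by $\phi$: take a general curve $C$ in $\Delta_i$, realize it (or a nearby K-semistable replacement) as a divisor $D$ on some degeneration $X$ of $\Sigma_5$, and exhibit that the resulting K-polystable pair is \emph{not} $(\Sigma_5, c\,(4L_1+2L_2+2E_1+2E_2))$ — for instance because its surface $X$ is not $\Sigma_5$, or because the boundary curve retains nontrivial moduli.

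The key steps, in order, are: (1) For a general $[C]\in\Delta_i$, describe $C$ as a nodal genus-six curve of the appropriate type — $\Delta_1$: a genus-$1$ component glued to a genus-$5$ component; $\Delta_2$: genus $2$ glued to genus $4$; $\Delta_3$: genus $3$ glued to genus $3$ — and recall what its canonical (or bicanonical) image looks like. (2) Track such a curve through the birational correspondence between $\ove{M}_6$ and the space of pairs: identify which stable (or semistable) pair $(X,cD)$ it corresponds to, using that $V=\ove{M}_6\setminus(\Delta_1\cup\Delta_2\cup\Delta_3)$ is exactly the locus where the quintic del Pezzo surface is smooth, so on $\Delta_i$ the surface $X$ degenerates or the curve $D$ acquires singularities/components lying on $\Sigma_5$. (3) Show the image pair is \emph{generically} different from the distinguished point: e.g. the dimension of $\ove{\varphi}(\Delta_i)$ is positive because the $(g',g-g')$-splitting varies in a family of positive dimension and this variation is not killed by the $\SL(2)\times\SL(3)$- (equivalently $\mtf{S}_5$-) action, or because the pair $(X,cD)$ attached to the distinguished point has a specific combinatorial type (a totally reducible boundary curve of the listed shape) which a general member of $\Delta_i$ does not share. (4) Conclude that $\ove{\varphi}|_{\Delta_i}$ is nonconstant, hence $\Delta_i$ is not contracted.

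The main obstacle I anticipate is step (2)–(3): making precise \emph{where} on the pairs side each $\Delta_i$ goes. One must understand the limit, under the (bi)canonical embedding and the degeneration to the del Pezzo picture, of a reducible genus-six curve; this typically involves either a stable-reduction/admissible-cover computation or a direct VGIT-limit computation using the one-parameter subgroups already appearing in Section 3. The cleanest route is probably a dimension count: show $\dim\ove{\varphi}(\Delta_i)\ge 1$ by producing a one-parameter family inside $\Delta_i$ whose images are pairwise non-isomorphic pairs (distinguishing them by a discrete invariant such as the singularity type of $X$ or the partition type of $D$), which immediately precludes their all equalling the single distinguished point. I would also flag the degenerate phrasing in the surrounding text — the target ``$|-2K_{\Sigma_5}|/\Aut(\Sigma_5)$'' is a point, so the statement is really about the $\ove{\varphi}$ into $\ove{M}^K(c)$ for $c$ just below the relevant wall — and I would state the lemma in that form to keep the logic of the Hassett–Keel comparison intact.
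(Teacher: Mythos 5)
There are two problems here, one a misreading and one a genuine gap. First, your premise that the target $|-2K_{\Sigma_5}|/\Aut(\Sigma_5)$ ``is literally a single point'' is false and contradicts the theorem quoted immediately before the proposition: $|-2K_{\Sigma_5}|$ is a $\mb{P}^{15}$ (by Riemann--Roch, $h^0(\Sigma_5,-2K_{\Sigma_5})=1+3K_{\Sigma_5}^2=16$), so the quotient by the finite group $\mtf{S}_5$ is a $15$-dimensional variety, birational to the $15$-dimensional $\ove{M}_6$; it is the model $\ove{M}_6(\alpha)$ for $16/47<\alpha\le 35/102$ and degenerates to a point only at $\alpha=16/47$. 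The statement therefore needs no reinterpretation through $\ove{M}^K(c)$ one chamber earlier: it asserts, exactly as written, that the birational map of $15$-folds $\ove{M}_6\dashrightarrow |-2K_{\Sigma_5}|/\Aut(\Sigma_5)$ does not send any $\Delta_i$ to the single point $[4L_1+2L_2+2E_1+2E_2]$ (this is what is later needed to control the center of the weighted blow-up $\ove{M}^K(1/17+\varepsilon)\to\ove{M}^K(1/17)$ in the proof of Theorem \ref{14}).

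Second, and more importantly, your steps (2)--(3) are not a proof but a statement of what a proof would have to do, and you say so yourself (``the main obstacle I anticipate''). Determining where a general curve of $\Delta_i$ lands under the correspondence is the entire content of the proposition; without it nothing has been established. The paper disposes of this by citing \cite[Propositions 2.1--2.3]{Mul14}, where Müller explicitly identifies the image of a general point of each $\Delta_i$ as a curve in $|-2K_{\Sigma_5}|$ with a prescribed singularity (a cusp for $\Delta_1$, a tacnode for $\Delta_2$, and the analogous higher degeneration for $\Delta_3$) obtained by contracting the lower-genus tail; these images sweep out positive-dimensional loci and in particular are never the quadruple-line configuration $4L_1+2L_2+2E_1+2E_2$. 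Your fallback suggestion --- exhibit a one-parameter family in $\Delta_i$ with pairwise non-isomorphic images --- would indeed suffice, but you do not produce such a family nor the discrete invariant separating its members, so the argument as written cannot be checked or completed without importing precisely the computation you defer.
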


\begin{proof}
    This is an immediate corollary of \cite[Proposition 2.1, Proposition 2.2, Proposition 2.3]{Mul14}.  
\end{proof}

\subsection{Wall-crossings in the Hassett-Keel program for $\ove{M}_6$}

We may assume that $1/17<c<11/52$.

\begin{prop}
    Let $\lambda$ be the Hodge line bundle on $V$, and $\delta_0$ the divisor on $V$ consisting of the closure of the locus of irreducible curves. Then we have $$\psi^{*}\lambda=6\eta+6\xi,\quad \textup{and}\quad \psi^{*}\delta_0=46\eta+47\xi.$$
\end{prop}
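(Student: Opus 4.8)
The plan is to compute $\psi^{*}\lambda$ directly as the first Chern class of the Hodge bundle of the family $g\colon\mts{D}\to U$, and then to extract $\psi^{*}\delta_0$ from Mumford's relation $12\lambda=\kappa_1+\delta$, which over $V$ reads $12\lambda=\kappa_1+\delta_0$ since $\Delta_1,\Delta_2,\Delta_3$ are removed. All identities will be established first over the big open subset $U^{\circ}\subseteq U$ on which $D$ is a nodal genus six curve whose class lies in $V$; the complement of $U^{\circ}$ has codimension at least two in the smooth variety $U$, so any identity of divisor classes on $U^{\circ}$ extends to $U$, and over $U^{\circ}$ the map $\psi$ is a morphism to $V$ for which $\lambda$, $\kappa_1$ and $\delta_0$ pull back in the standard way.

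First I record the relative dualizing sheaf. From the computations of $K_{\mts{X}/U}$, $\mtc{O}_{\mts{X}}(\mts{X})$ and $\mtc{O}_{\mts{X}}(\mts{D})$ in the previous subsection, adjunction along $\mts{D}\hookrightarrow\mts{X}$ gives
$$\omega_{\mts{D}/U}=(K_{\mts{X}/U}+\mts{D})|_{\mts{D}}=p_1^{*}\mtc{O}_{\mb{P}^1\times\mb{P}^2}(1,1)|_{\mts{D}}\otimes g^{*}\mtc{O}_{U}(\eta+\xi).$$
To compute $\psi^{*}\lambda=c_1(g_{*}\omega_{\mts{D}/U})$ I resolve $\mts{D}=V(f,g)\subseteq\mb{P}^1\times\mb{P}^2\times U$ by its Koszul complex, where $f$ is a section of $p_1^{*}\mtc{O}(1,2)\otimes p_2^{*}\mtc{O}_{\mb{P}^{11}}(1)$ and $g$ a section of $p_1^{*}\mtc{O}(2,2)\otimes p_2^{*}\mtc{O}_U(1)$. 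Twisting this length-two complex by $p_1^{*}\mtc{O}(1,1)\otimes p_2^{*}\mtc{O}_U(\eta+\xi)$ and applying $R(p_2)_{*}$, the degree-$0$ term produces $H^{0}(\mb{P}^1\times\mb{P}^2,\mtc{O}(1,1))\otimes\mtc{O}_U(\eta+\xi)\cong\mtc{O}_U(\eta+\xi)^{\oplus6}$ with vanishing higher cohomology, the two middle terms produce nothing because $H^{\bullet}(\mb{P}^1\times\mb{P}^2,\mtc{O}(0,-1))=H^{\bullet}(\mb{P}^1\times\mb{P}^2,\mtc{O}(-1,-1))=0$, and the top term $p_1^{*}\mtc{O}(-2,-3)$ contributes a single copy of $\mtc{O}_U$ in cohomological degree $3$ — this accounts for $R^{1}g_{*}\omega_{\mts{D}/U}\cong\mtc{O}_U$, consistent with Grothendieck duality. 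Hence $g_{*}\omega_{\mts{D}/U}\cong\mtc{O}_U(\eta+\xi)^{\oplus6}$ and $\psi^{*}\lambda=6\eta+6\xi$.

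For $\psi^{*}\delta_0$ I use $\psi^{*}\delta_0=12\psi^{*}\lambda-\psi^{*}\kappa_1$, so it remains to compute $\psi^{*}\kappa_1=g_{*}\big(c_1(\omega_{\mts{D}/U})^{2}\big)$. Put $h:=c_1\big(p_1^{*}\mtc{O}(1,1)|_{\mts{D}}\big)$, so $c_1(\omega_{\mts{D}/U})=h+g^{*}(\eta+\xi)$ and $c_1(\omega_{\mts{D}/U})^{2}=h^{2}+2h\cdot g^{*}(\eta+\xi)+g^{*}(\eta+\xi)^{2}$. By the projection formula $g_{*}g^{*}(\eta+\xi)^{2}=0$ and $g_{*}\big(2h\cdot g^{*}(\eta+\xi)\big)=2(\eta+\xi)\,g_{*}(h)=20\eta+20\xi$, since $g_{*}(h)=\deg\big(p_1^{*}\mtc{O}(1,1)|_{D}\big)=\deg\omega_{D}=10$. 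For the last term, $g_{*}(h^{2})=g_{*}j^{*}\big(c_1(p_1^{*}\mtc{O}(1,1))^{2}\big)=f_{*}\big(c_1(p_1^{*}\mtc{O}(1,1))^{2}\cdot[\mts{D}]_{\mts{X}}\big)$ for $j\colon\mts{D}\hookrightarrow\mts{X}$, where $[\mts{D}]_{\mts{X}}=c_1(p_1^{*}\mtc{O}(2,2))+\xi$ is the class of $\mts{D}$ on $\mts{X}$ (everything restricted to $\mts{X}$). Since $[\mts{X}]=p_1^{*}\mtc{O}(1,2)+\eta$ in $\mb{P}^1\times\mb{P}^2\times U$, the elementary identities $f_{*}(p_1^{*}\alpha|_{\mts{X}})=\int_{\mb{P}^1\times\mb{P}^2}\alpha\cdot\mtc{O}(1,2)$ for $\alpha\in A^{2}$ and $f_{*}(p_1^{*}\alpha|_{\mts{X}})=\big(\int_{\mb{P}^1\times\mb{P}^2}\alpha\big)\eta$ for $\alpha\in A^{3}$, combined with the projection formula, give $g_{*}(h^{2})=6\eta+5\xi$ using $\mtc{O}(1,1)^{2}\cdot\mtc{O}(2,2)=6$ and $\mtc{O}(1,1)^{2}\cdot\mtc{O}(1,2)=5$ on $\mb{P}^1\times\mb{P}^2$. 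Thus $\psi^{*}\kappa_1=26\eta+25\xi$ and $\psi^{*}\delta_0=12(6\eta+6\xi)-(26\eta+25\xi)=46\eta+47\xi$.

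The only delicate points are formal rather than computational: one must (i) justify that $c_1(g_{*}\omega_{\mts{D}/U})$ and $g_{*}\big(c_1(\omega_{\mts{D}/U})^{2}\big)$ genuinely compute $\psi^{*}\lambda$ and $\psi^{*}\kappa_1$, which follows from the fact that over $U^{\circ}$ the family $\mts{D}\to U^{\circ}$ induces the moduli morphism $\psi|_{U^{\circ}}$ and is pulled back from the universal curve over the stack $\ove{\mtc{M}}_6$; (ii) note that $\psi(U^{\circ})\subseteq V$, so $\psi^{*}\delta=\psi^{*}\delta_0$ there; and (iii) extend the resulting divisor-class identities from $U^{\circ}$ to $U$ using that $U$ is smooth and $U\setminus U^{\circ}$ has codimension at least two. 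I expect step (iii) — checking that the locus where $D$ is worse than a single node, or where the node is separating, has codimension at least two in $U$ — to need the most care, although it is geometrically transparent since a general singular member of $|{-}2K_X|$ has exactly one non-separating node.
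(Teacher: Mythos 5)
Your computation is correct and recovers both formulas, but by a genuinely different route from the paper. The paper argues with two explicit test curves dual to $\eta$ and $\xi$: a pencil of curves in $|-2K_X|$ on a fixed smooth quintic del Pezzo surface, where the $47$ nodal fibres are counted via the topological Euler characteristic of the blow-up of $X$ at the $20$ base points (topological Hurwitz), and $\deg\lambda=6$ by pushing forward the relative dualizing sheaf of the pencil; and a pencil of quintic del Pezzo surfaces cut by a fixed $(2,2)$-hypersurface, giving degrees $6$ and $46$. You instead work globally over $U$: the Koszul resolution gives $g_{*}\omega_{\mts{D}/U}\simeq\mtc{O}_U(\eta+\xi)^{\oplus 6}$, hence $\psi^{*}\lambda=6\eta+6\xi$, and $\psi^{*}\delta_0$ then follows from Mumford's relation $12\lambda=\kappa_1+\delta$ together with the intersection-theoretic computation $\psi^{*}\kappa_1=26\eta+25\xi$; restricting your classes to the paper's two pencils reproduces the numbers $6,47$ and $6,46$, so the two arguments agree. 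Your route buys more (the global structure of $g_{*}\omega_{\mts{D}/U}$ and $\kappa_1$ as a by-product) and avoids having to check that the test pencils are general enough that every singular member is uninodal; the paper's route avoids your two formal points. Of these, the codimension-two extension you flag is indeed harmless (non-nodal or reducible members of $|-2K_X|$ form codimension at least two), but there is one step you gloss over: writing a Koszul complex for $\mts{D}$ on $\mb{P}^1\times\mb{P}^2\times U$ presupposes that the tautological section of $\mtc{O}_{\mts{X}}(2,2)\otimes\xi$ cutting $\mts{D}$ out of $\mts{X}$ lifts to a section of $p_1^{*}\mtc{O}_{\mb{P}^1\times\mb{P}^2}(2,2)\otimes p_2^{*}\mtc{O}_U(\xi)$ on the ambient product. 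This requires a (true but unstated) vanishing such as $H^1(U,\xi-\eta)=0$, or can be sidestepped entirely by resolving in two steps --- first $\mtc{O}_{\mts{D}}$ on $\mts{X}$ via $0\to \bigl(M\otimes L_2^{-1}\bigr)\big|_{\mts{X}}\to M\big|_{\mts{X}}\to M\big|_{\mts{D}}\to 0$, then $\mtc{O}_{\mts{X}}$ on the product --- which yields the same pushforwards; with that remark added, your proof is complete.
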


\begin{proof}
    Let $T\simeq \mb{P}^1$ be a curve in $\mb{P}\mtc{E}$ given by a general pencil of curves in $|-2K_X|$ for a fixed smooth surface $X\in \mb{P}^{11}$. Then we have that $$(T.\eta)=0,\quad \textup{and}\quad (T.\xi)=1.$$ The universal family $\mts{C}\rightarrow T$ is the blow-up of $X$ at $(\mtc{O}_X(2,2))^2=20$ general points, and hence $\chi_{\topo}(\mts{C})=27$. By topological Hurwitz formula, we have that the number of singular fibers is $27-2(-10)=47$ since a general fiber is smooth of genus $6$, and each singular fiber is a nodal curve with exactly one singularity. On the other hand, the class of $\mts{C}$ in $T\times X$ is $\mtc{O}(1,-2K_X)$, and hence by adjunction we have $$K_{\mts{C}/T}=\mts{O}_{\mts{C}}(1,-K_{X}).$$ Thus the degree of the Hodge line bundle $\lambda_T$ on $T$ is 
    \begin{equation}\nonumber
        \begin{split}
            \deg \lambda_T&=c_1({p_{1}}_{*}\omega_{\mts{C}/T})\\
            &=c_1(\mtc{O}_{\mb{P}^1}(1)\otimes H^0(X,-K_{X}))\\
            &=6.
        \end{split}
    \end{equation}
    Similarly, fix a general surface $S\in |\mtc{O}_{\mb{P}^1\times\mb{P}^2}(2,2)|$, and take a general pencil of quintic del Pezzo surfaces in $|\mtc{O}_{\mb{P}^1\times\mb{P}^2}(1,2)|$ to intersect $S$, we obtain a one-parameter family of curves $\mts{C}'\rightarrow T'$ over $T'\simeq \mb{P}^1$. Here we view $T'$ as a curve in $\mb{P}\mtc{E}$, and we have that $$(T'.\eta)=1,\quad \textup{and}\quad (T'.\xi)=0.$$ The same computation as above yields that $$(T'.\psi^{*}\delta_0)=46,\quad\textup{and}\quad (T'.\psi^{*}\lambda)=6.$$ It follows immediately that $$\psi^{*}\lambda=6\eta+6\xi,\quad \textup{and}\quad \psi^{*}\delta_0=46\eta+47\xi.$$
\end{proof}

\begin{lemma}\label{4}
 The moduli spaces $\ove{M}_6(\alpha)$ and $\ove{M}^K(c)$ are all normal.
\end{lemma}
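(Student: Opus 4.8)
The plan is to treat the two families of spaces by separate, essentially standard, arguments.

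For $\ove{M}_6(\alpha)$ the plan is purely formal. First I would recall that $\ove{M}_6$ is itself a normal projective variety, being the coarse moduli space of the smooth proper Deligne--Mumford stack $\ove{\mathcal{M}}_6$ and hence having at worst finite quotient singularities. Next, for $\alpha$ in the range under consideration the graded ring $R(\alpha):=\bigoplus_{n\geq 0}H^0\big(\ove{M}_6,\lfloor n(K_{\ove{M}_6}+\alpha\delta)\rfloor\big)$ is finitely generated --- this is exactly what makes $\ove{M}_6(\alpha)=\Proj R(\alpha)$ well defined, and it is part of the Hassett--Keel input we are invoking throughout this section. Finally I would invoke the standard fact that the section ring of a Weil $\mb{Q}$-divisor on a normal variety is an integrally closed graded domain (cf. \cite[Section 2.3]{Laz04}): all sections sit inside the function field $K(\ove{M}_6)$ and integral closedness is checked there. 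Hence $\ove{M}_6(\alpha)=\Proj R(\alpha)$ is normal.

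For $\ove{M}^K(c)$ the plan is to realize it as the good moduli space of the K-moduli stack $\mtc{M}^K(c)$ of $c$-K-semistable pairs and then argue that this stack is smooth, in particular normal. An \'etale chart around a $c$-K-polystable pair $(X,cD)$ is given by an open subset of the versal deformation space of $(X,D)$, and in our situation this space is smooth: by the classification in \cite{zha22} the surfaces $X$ occurring are degree-$5$ del Pezzo surfaces with at worst $A_1,A_2,A_3$ singularities, whose deformations are unobstructed with smooth base, while $D$ varies in $|-2K_X|$, which forms a projective-space bundle over the surface deformation. One then concludes by the standard fact that a good moduli space of a normal (here even smooth) algebraic stack is normal --- locally this is the statement that the invariant subring $A^G$ of a normal ring $A$ under a linearly reductive group $G$ is again normal. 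Together with \cite[Theorem 1.1]{zha22} (properness, and the $\mathfrak{S}_5$-quotient description for small $c$) this yields normality of $\ove{M}^K(c)$.

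The main obstacle is making ``$\mtc{M}^K(c)$ is smooth'' airtight: one has to control precisely which pairs appear in $\mtc{M}^K(c)$ as $c$ increases toward $\tfrac{11}{52}$ and verify the deformation space of every one of them is smooth. If some K-polystable limit pair has obstructed deformations, the fallback is to prove instead that $\mtc{M}^K(c)$ is reduced and satisfies Serre's conditions $R_1$ and $S_2$ (reducedness and $S_2$ from the good-moduli/K-stability formalism, $R_1$ from genericity over the locus of smooth pairs), or simply to cite the general normality theorem for K-moduli of log Fano pairs. For $\ove{M}_6(\alpha)$ there is no real obstacle beyond the finite generation of $R(\alpha)$, which we assume throughout. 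This normality statement is what will later let us upgrade the birational morphism $\ove{M}_6(\alpha(c))\to\ove{M}^K(c)$ to an isomorphism via Zariski's Main Theorem.
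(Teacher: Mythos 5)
Your proposal is correct, but for the Hassett--Keel half it takes a genuinely different route from the paper. The paper argues via moduli theory: deformations of the parametrized curves are unobstructed, so $\ove{M}_6(\alpha)$ is \'etale-locally a quotient of a smooth deformation space by a finite automorphism group, and normality follows from the GIT/quotient formalism of \cite[Chapter 0]{MFK94}; you instead argue purely algebro-geometrically that $\ove{M}_6(\alpha)=\Proj R(\alpha)$ where $R(\alpha)$ is the section ring of a $\mb{Q}$-divisor on the normal variety $\ove{M}_6$, and such section rings are integrally closed (intersection of valuation conditions inside $K(\ove{M}_6)(t)$), so the $\Proj$ is normal. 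Your argument buys robustness: it needs no modular interpretation of $\ove{M}_6(\alpha)$ at all (the paper's \'etale-chart statement is only really transparent when $\ove{M}_6(\alpha)$ is known to be a moduli space of curves with finite automorphisms), at the price of assuming finite generation of $R(\alpha)$, which is anyway built into the definition; your citation of \cite[Section 2.3]{Laz04} is not quite on target --- the cleaner reference is the standard fact that ample models/section rings of big divisors on normal varieties are normal (e.g.\ the cone constructions in Koll\'ar's book or the ample-model lemmas in BCHM) --- but the mathematics is sound. For the K-moduli half your argument (smoothness/unobstructedness of the K-moduli stack plus normality of good moduli spaces of normal stacks, with the general K-moduli normality theorem as fallback) is essentially the same proof as \cite[Proposition 4.6(3)]{ADL19}, which is exactly what the paper cites, so there you spell out what the paper outsources; your own caveat about checking unobstructedness for every polystable degeneration is handled in this setting since the surfaces occurring are del Pezzo surfaces of degree $5$ with ADE singularities, whose pair deformations are unobstructed.
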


\begin{proof}
    Since the deformation of any curve $[C]\in \ove{M}_6(\alpha)$ is unobstructed, there is an \'{e}tale map $$U/\Aut(C)\longrightarrow \ove{M}_6(\alpha)$$ with image an open neighborhood of $[C]$ where $U$ is the first order deformation space of $[C]$. Since $\Aut(X)$ is finite, then $\ove{M}_6(\alpha)$ is normal by a general result of GIT (ref. \cite[Chapter 0]{MFK94}). The normality of K-moduli spaces can be proven as in \cite[Proposition 4.6(3)]{ADL19}.
\end{proof}

\begin{theorem}\label{14}
    Let $0\leq c\leq \frac{11}{52}$ be a rational number, and $\alpha(c)=\frac{32-19c}{94-68c}\in \left[\frac{16}{47},\frac{97}{276}\right]$. Then we have an isomorphism $$\ove{M}_6(\alpha(c))\simeq \ove{M}^K(c).$$
\end{theorem}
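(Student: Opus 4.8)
The plan is to identify both sides with a common GIT quotient and then match the polarizations. First I would recall the two birational models in play: on one hand, the K-moduli space $\ove{M}^K(c)$, which by Theorem \ref{12} agrees with the VGIT quotient $\ove{M}^{\GIT}(c) = \mb{P}\mtc{E}\sslash_{t(c)} G$ on the relevant open locus (more precisely, there is the degree-$5$ finite cover, and the quotients themselves are $|-2K_\Sigma|\sslash\mtf{S}_4$ and $|-2K_\Sigma|\sslash\mtf{S}_5$); on the other hand, the log canonical model $\ove{M}_6(\alpha)$. The key geometric input is the rational map $\psi:U\dashrightarrow V\subseteq\ove{M}_6$ sending $(X,D)\mapsto D$, together with the already-computed pullback formulas $\psi^*\lambda = 6\eta+6\xi$ and $\psi^*\delta_0 = 46\eta+47\xi$.

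The main computation is to express $K_{\ove{M}_6}+\alpha\delta$ as a pullback under $\psi$ (or rather, to pull it back to $\mb{P}\mtc{E}$ via $\phi$ and $\psi$ simultaneously) and check it is proportional to the VGIT polarization $\mtc{L}_{t(c)} = \eta + t(c)\xi$, where $t(c) = \frac{5c}{4+2c}$. Using the standard relation on $\ove{M}_g$ that $K_{\ove{M}_g} = 13\lambda - 2\delta$ (plus correction terms along $\delta_1$, which live outside $V$ and so do not affect the computation on the big open locus), one gets $K_{\ove{M}_6}+\alpha\delta = 13\lambda + (\alpha - 2)\delta_0$ modulo boundary supported away from $V$. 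Pulling back: $\psi^*(K_{\ove{M}_6}+\alpha\delta) = 13(6\eta+6\xi) + (\alpha-2)(46\eta+47\xi) = (78 + 46(\alpha-2))\eta + (78 + 47(\alpha-2))\xi = (46\alpha - 14)\eta + (47\alpha - 16)\xi$. Setting this proportional to $\eta + t(c)\xi$ forces $\frac{47\alpha-16}{46\alpha-14} = t(c) = \frac{5c}{4+2c}$, and solving this linear equation in $\alpha$ should reproduce exactly $\alpha(c) = \frac{32-19c}{94-68c}$; I would carry out this cross-multiplication explicitly to confirm the formula and to check that the coefficient $46\alpha-14$ is positive on the range $\alpha\in[16/47, 97/276]$ (equivalently $c\in[0,11/52]$), so that the class is genuinely in the relevant ample cone.

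Given the proportionality of polarizations, the isomorphism itself follows from a Proj/GIT argument: the section ring of $(\ove{M}_6, K_{\ove{M}_6}+\alpha\delta)$ is computed, via $\psi$ and the finite quotient structure, as the ring of $G$-invariant (or $\mtf{S}_5$-invariant on $|-2K_\Sigma|$) sections of powers of $\mtc{L}_{t(c)}$, which is precisely the homogeneous coordinate ring whose Proj is $\mb{P}\mtc{E}\sslash_{t(c)}G \simeq \ove{M}^K(c)$. The comparison of section rings requires knowing that $\psi$ (equivalently the composite $U\dashrightarrow\ove{M}_6$) does not contract any divisor that would be needed for the section ring on the $\ove{M}_6$ side — here Proposition \ref{3} is used to rule out contraction of $\Delta_1,\Delta_2,\Delta_3$ in the first chamber, and an analogous dimension count (as in the proof of Theorem \ref{13}) handles the later walls. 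Normality of both sides (Lemma \ref{4}) then upgrades the resulting bijective birational morphism to an isomorphism via Zariski's Main Theorem, exactly as in the proof of Proposition \ref{9}.

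The hard part will be the bookkeeping of boundary divisors: making sure that the correction terms in the canonical class formula for $\ove{M}_6$ (the contributions along $\delta_1$, and the distinction between $\lambda$ and $13\lambda - 2\delta$ versus the Deligne–Mumford stack versus coarse space) are correctly accounted for, and that the rational map $\psi$ extends appropriately across all walls $c_1,\dots,c_6$ so that the section-ring identification is valid globally on $[0,11/52]$ rather than just on a single chamber. In particular, one must verify that each VGIT wall-crossing on the $\mb{P}\mtc{E}$ side matches a genuine change of log canonical model on the $\ove{M}_6$ side under the correspondence $\alpha\leftrightarrow c$, which amounts to checking that the walls $t_i$ of Theorem \ref{13} transport to the claimed values $\{16/47, 35/102, 29/55, 41/118, 22/63, 47/134\}$ of $\alpha$ — a computation I would do by substituting the $c_i$ into $\alpha(c)$.
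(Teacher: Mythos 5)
Your numerical matching is exactly the paper's: pulling back $13\lambda-(2-\alpha)\delta_0$ via $\psi$ using $\psi^*\lambda=6\eta+6\xi$ and $\psi^*\delta_0=46\eta+47\xi$ gives $(46\alpha-14)\eta+(47\alpha-16)\xi$, and equating $\frac{47\alpha-16}{46\alpha-14}$ with $t(c)=\frac{5c}{4+2c}$ recovers $\alpha(c)=\frac{32-19c}{94-68c}$. But the way you then close the argument has two genuine gaps. First, you cannot identify $\Proj$ of the ring of $G$-invariant sections of powers of $\mtc{L}_{t(c)}$ on $\mb{P}\mtc{E}$ with $\ove{M}^K(c)$: by Theorem \ref{12} the GIT quotient $\mb{P}\mtc{E}\sslash_{t(c)}G\simeq|-2K_\Sigma|\sslash\mtf{S}_4$ is a degree-$5$ cover of $\ove{M}^K(c)\simeq|-2K_\Sigma|\sslash\mtf{S}_5$, not isomorphic to it, precisely because the stabilizer of a smooth $X$ in $G$ is $\mtf{S}_4$ rather than $\Aut(\Sigma)=\mtf{S}_5$. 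Sections pulled back from $\ove{M}_6$ are invariant under the full $\mtf{S}_5$-symmetry, a strictly smaller ring than the $G$-invariants, so your section-ring identification lands on the wrong space. The paper instead compares section rings through the map $\ove{\varphi}\colon\ove{M}_6\dashrightarrow\ove{M}^K(c)$ and the CM line bundle $\Lambda_{\CM,c}$ on the K-moduli space, using the $\psi$-pullback only to pin down the proportionality constant.

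Second, and more seriously, proportionality of the classes is not enough: to conclude $\Proj R(\ove{M}_6,K+\alpha\delta)\simeq\Proj R(\ove{M}^K(c),s\Lambda_{\CM,c})$ one needs the difference $K_{\ove{M}_6}+\alpha\delta-s\,\ove{\varphi}^{*}\Lambda_{\CM,c}$ to be an \emph{effective} $\ove{\varphi}$-exceptional divisor, and verifying effectivity is the heart of the proof. The paper imports from \cite[Proposition 4.3]{Mul14} that in the first chamber this difference is $(35/2-51\alpha)\ove{\mtc{GP}}_6+(9-11\alpha)\delta_1+(19-29\alpha)\delta_2+(34-96\alpha)\delta_3$; the coefficient of the Gieseker--Petri divisor is \emph{negative} for $\alpha>35/102$, which covers most of the claimed range $[16/47,97/276]$. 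The argument is saved only by the observation that after the first K-moduli wall $c_1=1/17$, the divisor $\ove{\mtc{GP}}_6$ maps birationally onto the exceptional divisor of the weighted blow-up $\ove{M}^K(1/17+\varepsilon)\to\ove{M}^K(1/17)$, hence is no longer $\ove{\varphi}$-exceptional and its coefficient is absorbed into the pullback, leaving only the boundary terms $(9-11\alpha)\delta_1+(19-29\alpha)\delta_2+(34-96\alpha)\delta_3$, which are effective for $\alpha\le 34/96$. Your proposal never checks effectivity and never mentions $\ove{\mtc{GP}}_6$; Proposition \ref{3} alone (non-contraction of $\Delta_1,\Delta_2,\Delta_3$ to the special point) does not substitute for this. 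Without the Gieseker--Petri analysis the argument fails on the interval $35/102<\alpha\le 97/276$.
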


\begin{proof}
    Recall that in our setting, there is a commutative diagram 
    $$ \xymatrix{
    U \ar@{-->}[dd]_{\phi} \ar@{-->}[rr]^{\huge{\psi}} && V \ar@{-->}[ddll]^{\varphi}    \\
    \\
    \ove{M}^K(c)  }.$$

    We have that $$\psi^{*}\varphi^{*}\Lambda_{\CM,\ove{M}^K_c,c}=\phi^{*}\Lambda_{\CM,\ove{M}^K_c,c}=\Lambda_{\CM,U,c}=3(1-2c)^2(4+2c)\left(\eta+\frac{5c}{4+2c}\xi\right).$$ On the other hand, we have that $$\psi^{*}(K_{V}+\alpha \delta_0)=\psi^{*}(13\lambda-(2-\alpha)\delta_0)=13(6\eta+6\xi)-(2-\alpha)(46\eta+47\xi),$$ which is proportional, up to a positive constant, to $$\eta+\frac{47\alpha-16}{46\alpha-14}\xi=\eta+\frac{5c}{4+2c}\xi.$$ Thus there is a constant $s>0$ such that $K_{\ove{M}_6}+\alpha\delta-\ove{\varphi}^{*}(s\Lambda_{\CM,\ove{M}^K_c,c})$ is a $\ove{\varphi}$-exceptional $\mb{Q}$-divisor. 
    
    By Proposition \ref{3}, we see that the images of the divisors $\Delta_1,\Delta_2,\Delta_3$ under the rational map $$\phi_{0<c\leq 1/17}:\ove{M}_6\dashrightarrow |-2K_{\Sigma_5}|/\mathfrak{S}_5$$ is not contained in the center of the weighted blow-up $$\ove{M}^K(1/17+\varepsilon)\longrightarrow \ove{M}^K(1/17).$$ 
    Consider the commutative diagram  $$ \xymatrix{
    \ove{M}_6 \ar@{-->}[ddr]_{\ove{\varphi}_{0<c\leq 1/17}} \ar@{-->}[rr]^{\huge{\ove{\varphi}_{1/17<c\leq 11/52}\quad}} && \ove{M}^K(1/17<c\leq 11/52) \ar@{-->}[ddl]^{\sigma}    \\
    \\
    & \ove{M}^K(0<c\leq 1/17)  },$$ where $\sigma$ is the weighted blow-up followed by some birational modifications in codimension $2$. Since any non-special curve of genus 6 is contained in a unique ADE quintic del Pezzo surface, then $\ove{\varphi}_{1/17<c\leq 11/52}$ maps the Gieseker-Petri divisor $\ove{\mtc{GP}}_6$ in $\ove{M}_6$ birationally to the $\sigma$-exceptional divisor $E$. By \cite[Proposition 4.3]{Mul14}, there is a constant $s'>0$ such that $K_{\ove{M}_6}+\alpha\delta-s'\ove{\varphi}^{*}_{0<c\leq 1/17}\Lambda_{\CM,c}$ is $\mb{Q}$-linearly equivalent to $$(35/2-51\alpha)\ove{\mtc{GP}}_6+(9-11\alpha)\delta_1+(19-29\alpha)\delta_2+(34-96\alpha)\delta_3.$$ It follows that $$\left(K_{\ove{M}_6}+\alpha\delta\right)-s\ove{\varphi}^{*}_{\frac{1}{17}<c\leq\frac{11}{52}}\Lambda_{\CM,c}=(9-11\alpha)\delta_1+(19-29\alpha)\delta_2+(34-96\alpha)\delta_3,$$ which is effective when $\alpha\leq \frac{34}{96}$. Therefore by Lemma \ref{4} we deduce
    \begin{equation}\nonumber        \begin{split}\ove{M}_6(\alpha)&=\Proj\bigoplus_{n}H^0\left(\ove{M}_6,n(K_{\ove{M}_6}+\alpha\delta)\right)\\
            &\simeq \Proj\bigoplus_{n}H^0\left(\ove{M}^K(c),ns(\Lambda_{\CM,c})\right)\\
            &=\ove{M}^K(c)
        \end{split}
    \end{equation}
    for $1/17<c\leq 11/52$.
\end{proof}

In \cite{zha22}, we find the walls $$\left\{0,\frac{1}{17},\frac{2}{19},\frac{1}{7},\frac{4}{23}\right\}$$ for K-moduli spaces $\ove{M}^K(c)$. The only wall before $11/52$ that we miss is $1/5$. This will be displayed in our upcoming work \cite{SZ22}. As a consequence, the last walls of the log canonical models $\ove{M}_6(\alpha)$ are $$\left\{\frac{16}{47},\frac{35}{102},\frac{29}{55},\frac{41}{118},\frac{22}{63},\frac{47}{134}\right\}.$$

\begin{remark}
\textup{The value $\alpha(11/52)=329/964$ is not expected to be a wall for the Hassett-Keel. In fact, if it were, then by the local structure of VGIT, there will be divisor appearing in $\ove{M}_6(329/964+\varepsilon)$. However, the Picard group of $\ove{M}_6=\ove{M}_6(1)$ is of rank four, which is generated by $\Delta_0,\Delta_1,\Delta_2,\Delta_3$ and $\ove{\mtc{GP}}_6$. The divisor $\Delta_1,\Delta_2,\Delta_3$ should not be the replacement of triple conics in $|-2K_{\Sigma}|$. In fact, the triple conic is replaced by trigonal curves in the wall-crossing $$\ove{M}^K(11/52+\varepsilon)\rightarrow \ove{M}^K(11/52)\simeq\ove{M}^K(11/52-\varepsilon)$$ for K-moduli, and the locus of trigonal curves in $\ove{M}_6$ is of codimension 2. As a result, the K-moduli spaces cannot characterize the Hassett-Keel for all $\alpha\in[0,1]$.}
\end{remark}

\bibliographystyle{alpha}
\bibliography{citation}

\end{document}